\newtheorem{proposition}{Proposition}[section]
\newtheorem{theorem}{Theorem}[section]
\newtheorem{definition}{Definition}[section]
\newtheorem{corollary}{Corollary}[section]
\newtheorem{lemma}{Lemma}[section]
\newtheorem{remark}{Remark}[section]
\newtheorem{example}{Example}[section]
\numberwithin{equation}{section}
\begin{document}
\markboth{}{}

\title[Weak differentiability of Wiener functionals
and occupation times] {Weak differentiability of Wiener functionals
and occupation times}

\author{Dorival Le\~ao}

\address{Departamento de Matem\'atica Aplicada e Estat\'istica. Universidade de S\~ao
Paulo, 13560-970, S\~ao Carlos - SP, Brazil} \email{estatcamp@icmc.usp.br}

\author{Alberto Ohashi}

\address{Departamento de Matem\'atica, Universidade Federal da Para\'iba, 13560-970, Jo\~ao Pessoa - Para\'iba, Brazil}\email{alberto.ohashi@pq.cnpq.br; amfohashi@gmail.com}

\author{Alexandre B. Simas}

\address{Departamento de Matem\'atica, Universidade Federal da Para\'iba, 13560-970, Jo\~ao Pessoa - Para\'iba, Brazil}\email{alexandre@mat.ufpb.br}

\thanks{Corresponding author: Alberto Ohashi}
\date{\today}

\keywords{Wiener functionals, Young Integral, p-variation, $(p,q)$-Bivariation, Local-times; Brownian motion}
\subjclass{60H99}

\begin{center}

\end{center}

\begin{abstract}
In this paper, we establish a universal variational characterization of the non-martingale components associated with weakly differentiable Wiener functionals in the sense of Le\~ao, Ohashi and Simas. It is shown that any Dirichlet process (in particular semimartingales) is a differential form w.r.t Brownian motion driving noise. The drift components are characterized in terms of limits of integral functionals of horizontal-type perturbations and first-order variation driven by a two-parameter occupation time process. Applications to a class of path-dependent rough transformations of Brownian paths under finite $p$-variation ($p\ge 2$) regularity is also discussed. Under stronger regularity conditions in the sense of finite $(p,q)$-variation, the connection between weak differentiability and two-parameter local time integrals in the sense of Young is established.
\end{abstract}

\maketitle

\section{Introduction}
Recently, a new branch of stochastic calculus has appeared, known as functional
It\^o calculus, which results to be an extension of classical It\^o calculus to non-anticipative functionals depending on the whole path of a noise $W$ and not only on its current value. Inspired by applications in mathematical finance, Dupire~\cite{dupire} introduced the so-called horizontal, vertical and second-order vertical derivative operators which describe semimartingales of the form $F(W)$ for a given smooth non-anticipative functional $F$. Following the insightful Dupire's idea, many authors have investigated It\^o-type formulas for adapted processes of the form $F(W)$ under a variety of type of noises $W$ and regularity conditions on $F$. See e.g Cont and Fourni\'e \cite{cont,cont1}, Ananova and Cont \cite{ananova}, Keller and Zhang \cite{keller}, Viens and Zhang \cite{viens}, Cosso and Russo \cite{cosso1,cosso2}, Oberhauser \cite{ober}, Ohashi, Shamarova and Shamarov \cite{ohashi}, Saporito \cite{saporito} and Resestolato \cite{resestolato1}. Cont \cite{cont2} and Peng and Song \cite{peng2} proposed Sobolev-type approaches for It\^o processes. Towards a deeper understanding of the non-Markovian phenomenon, the search for a sound definition of path-dependent PDEs has attracted great interest by having functional stochastic calculus as the starting point. In this direction, see e.g Peng and Wang \cite{peng1}, Ekren, Keller, Touzi and Zhang \cite{touzi1}, Ekren, Touzi and Zhang \cite{touzi2,touzi3}, Ekren and Zhang \cite{ekren},  Cosso and Russo \cite{cosso3}, Barrasso and Russo \cite{barrasso}, Bion-Nadal \cite{nadal}, Flandoli and Zanco \cite{flandoli}, Cosso, Federico, Gozzi, Resestolato and Touzi \cite{resestolato} and Buckdahn, Keller, Ma and Zhang \cite{buckdhan1}. Applications to Finance and stochastic control are considered by e.g Jazaerli and Saporito \cite{jazaerli}, Possama\"i, Tan and Zhou~\cite{possamai}, Cont and Lu \cite{cont3}, Pham and Zhang \cite{pham}. See also the recent monograph of Zhang \cite{zhang} for further references in this topic.

Inspired by the discretization scheme introduced by Le\~ao and Ohashi~\cite{LEAO_OHASHI2013}, Le\~ao, Ohashi and Simas \cite{LOS} have developed a non-anticipative differential calculus over the space of square integrable Wiener functionals which proves to be a weaker notion of the functional It\^o calculus studied by previous works \cite{dupire,cont,cont1,cont2,peng2,cosso1,ober,ohashi}. The building block of the theory is composed by what we call a \textit{stable imbedded discrete structure} $\mathcal{Y}  = \big((X^k)_{k\ge 1},\mathscr{D}\big)$ (see Definition \ref{stabledef}) equipped with a continuous-time random walk approximation $\mathscr{D} = \{\mathcal{T},A^{k}; k\ge 1\}$ driven by a suitable class of waiting times $\mathcal{T} = \{T^{k}_n; n,k\ge 1\}$. The class $\mathcal{T}$ describes the evolution of the Brownian motion at small scales and $(X^k)_{k\ge 1}$ is a suitable type of approximation (adapted to $A^k$) for a given target process $X$. In \cite{LOS}, the authors show that a large class of Wiener functionals admits a stable imbedded discrete structure which has to be interpreted as a rather weak concept of continuity w.r.t Brownian motion driving noise. One major advantage of this theory is the possibility of computing the sensitivities of $X$ w.r.t $B$ via simple and explicit differential-type operators based on $\mathscr{D}$. For a concrete application of this theory, we refer the reader to \cite{LEAO_OHASHI2017.1,LEAO_OHASHI2017.2,bezerra}.

Based on limits of generic imbedded discrete structures $\mathcal{Y}  = \big((X^k)_{k\ge 1},\mathscr{D}\big)$, it is revealed a large class of square-integrable Wiener functionals can be decomposed into

\begin{equation}\label{wdrepint}
X(t) = X(0) + \int_0^t \mathcal{D}X(s)dB(s) + V_X(t); 0\le t\le T,
\end{equation}
where $dB$ is the It\^o integral w.r.t a Brownian motion $B$ and $V_X$ is a continuous process which carries all possible orthogonal variations (in a sense made precise in \cite{LOS}) of $X$ w.r.t $B$. This class of Wiener functionals are called \textit{weakly differentiable} (see Definition \ref{defweakder}) which turns out to encompass semimartingales and other classes of processes with unbounded variation drifts. The non-anticipative process $\mathcal{D}X$ is constructed based on limits of the variations

$$\mathcal{D}^{\mathcal{Y},k}X (T^{k}_n) = \frac{\Delta X^k(T^{k}_n)}{\Delta A^{k}(T^{k}_n)}; n\ge 1,$$
for a given stable imbedded discrete structure $\mathcal{Y}  = \big((X^k)_{k\ge 1},\mathscr{D}\big)$.

The goal of this article is to present a universal variational characterization of the drift $V_X$ for a given weakly differentiable Wiener functional $X$ of the form (\ref{wdrepint}). We are interested in establishing a common and universal probabilistic structure of $V_X$ which unifies all types of orthogonal variations of $X$ w.r.t the Brownian motion driving noise. Our first result in this direction (see Proposition \ref{ulocaltime}) characterizes the drift

\begin{equation}\label{vintr}
V_X(\cdot)=\lim_{k\rightarrow \infty}\Bigg[\int_0^\cdot \mathbb{D}^{\mathcal{Y},k,h}X(s)ds - \frac{1}{2}\int_0^\cdot\int_{-\infty}^{+\infty}\nabla^ {\mathcal{Y},k,v}X(s,x)d_{(s,x)}\mathbb{L}^{k,x}(s)\Bigg]
\end{equation}
weakly in $\mathbf{B}^2(\mathbb{F})$ (see (\ref{B2norm}) for the definition of this space) for every stable imbedded discrete structure $\mathcal{Y} = \big((X^k)_{k\ge1},\mathscr{D}\big)$, where $\mathbb{D}^{\mathcal{Y},k,h}X$ and $\nabla^ {\mathcal{Y},k,v}X$ encode an horizontal-type variation (see (\ref{ykh})) and a first-order variation (see (\ref{fkdd})) of $X$ w.r.t $B$, respectively. The integrator process in the double integral $d_{(s,x)}$ in (\ref{vintr}) is given by a suitable two-parameter occupation-type process $(s,x)\mapsto\mathbb{L}^{k,x}(s)$ for $\mathscr{D}$ given by (\ref{Lk}).

Our characterization of the drift component of weakly differentiable processes in terms of occupation times is reminiscent from the classical results of Tanaka-Wang-Meyer where the absence of the pointwise spatial second-order derivative is mitigated by means of local-times via the occupation time formula and mean value theorem. In the path-dependent case, the absence of second-order vertical derivative in the sense of Dupire can also be compensated by means of local-times as demonstrated by \cite{ohashi,saporito} at the expense of several pathwise regularity conditions. In this article, we show that this phenomenon is universal in the sense that drift components of \textit{every} weakly differentiable Wiener functional is totally characterized by (\ref{vintr}). As a by-product, Proposition \ref{ulocaltime} provides a complete variational characterization of Dirichlet processes (see e.g \cite{bertoin}) (in particular square-integrable semimartingales) in full generality with none pathwise regularity condition.

In the second part of this paper, we study the relation between weak differentiability and finite $(p,q)$-variation for $1\le p,q < \infty$ in time and space. In particular, it is shown that the weak differentiability concept also covers Wiener functionals with drifts admitting rather weak path-regularity beyond Dirichlet processes. Let $\mathcal{W}_p(a,b)$ be the space of real-valued functions $f:[a,b]\rightarrow\mathbb{R}$ such that

$$\|f\|^p_{[a,b];p}:= \sup_{\Pi}\sum_{x_i\in \Pi}|f(x_i)-f(x_{i-1})|^p <\infty$$
where $p\ge 1$ and the $\sup$ is taken over all partitions $\Pi$ of a compact set $[a,b]\subset \mathbb{R}$. Theorem \ref{roughcasePROP} reveals that if a square-integrable Wiener functional $X$ has the form

$$X(t) = \int_0^t Z(s)dB(s) + \int_0^t Y(s)dg(s)$$
where $Z$ satisfies some regularity conditions (see section \ref{roughcaseSUB}), $Y \in \mathcal{W}_p(0,T)$ a.s and $g\in \mathcal{W}_q(0,T)$ such that $1/p + 1/q>1$ and $2 \le q  < \infty$, then $X$ is weakly differentiable and

$$\int_0^\cdot Y(s)dg(s) = \lim_{k\rightarrow \infty}\Bigg[\int_0^\cdot \mathbb{D}^{\mathcal{Y},k,h}X(s)ds - \frac{1}{2}\int_0^\cdot\int_{-\infty}^{+\infty}\nabla^ {\mathcal{Y},k,v}X(s,x)d_{(s,x)}\mathbb{L}^{k,x}(s)\Bigg]$$
weakly in $\mathbf{B}^2(\mathbb{F})$ for every stable imbedded discrete structure $\mathcal{Y}$ associated with $X$. Under a suitable space-time regularity condition (in the sense of Young~\cite{young1}) on the gradient functional $\nabla_x$, Theorem \ref{youngTh} identifies the associated drift (\ref{vintr}) of a given $X=F(B)$ in terms of

\begin{equation}\label{eqint}
\int_0^t \mathcal{D}^h F_s(B_s) ds  -\frac{1}{2}\int_0^\cdot\int_{-\infty}^{+\infty}\nabla_x F_s(\textbf{t}(B_s,x))d_{(s,x)}\mathcal{\ell}^{x}(s)
\end{equation}
where $\mathcal{D}^h$ is similar in nature to the horizontal derivative operator of \cite{cont}, $\textbf{t}(B_s,x)$ is a ''terminal value modification'' defined by the following pathwise operation
$$
\textbf{t}(B_s,x):=\left\{
\begin{array}{rl}
B(u); & \hbox{if}~0\le u<s \\
x;& \hbox{if}~u=s
\end{array}
\right.
$$
and the $d_{(s,x)}\ell^x(s)$-integral in (\ref{eqint}) is the pathwise 2D Young integral w.r.t the Brownian local-time $\{\ell^x(s); 0\le s\le T,x\in\mathbb{R}\}$ random field. 

Finally, it is worth to mention that the decompositions found in this article do not presuppose pathwise regularity conditions on a given functional representation but only the existence of a stable imbedded discrete structure $\mathcal{Y}  = \big((X^k)_{k\ge 1},\mathscr{D}\big)$. This might be interpreted as a rather weak concept of continuity w.r.t Brownian motion driving noise which is conveniently found in a large class of Wiener functionals like Dirichlet processes (see Proposition \ref{ulocaltime} and Theorem 4.2 in \cite{LOS}) and even for processes with irregular drifts with very high variation (see section \ref{roughcaseSUB}). In this direction, we drive the attention to Oberhauser \cite{ober} who develops nonlinear It\^o-type decompositions based on a weak concept of continuity. In \cite{ober}, continuity requires stability under piecewise constant noise approximations in probability w.r.t the driving noise and running time local perturbations for a given functional representation. In contrast to \cite{ober}, we do not need to work directly with a functional representation but rather with stochastic approximations which can be chosen case by case. Then, we replace continuity of a given functional representation w.r.t driving noise by the existence of imbedded structures. In addition, our differential operators are computed via a finite-dimensional probabilistic approximation procedure, while \cite{ober} assumes pathwise differentiability conditions in the sense of Dupire subject to continuity along piecewise constant approximations. By exploring the fine structure of the Brownian motion driving noise, we are also able to go beyond It\^o-type processes as considered in \cite{ober}. However, we are still restricted to a fixed probability measure. For partial results in the non-linear case, we refer the reader to \cite{LEAO_OHASHI2017.2}.




The remainder of this article is organized as follows. The next section fixes notation and summarizes some basic results of \cite{LOS}. Section \ref{LOCALTIMESEC} presents the variational characterization of drifts of weakly differentiable processes. Section \ref{pqregularitysection} presents a class of path-dependent rough transformations of Brownian paths. Section \ref{Youngsection} analyzes the convergence of the occupation time integrals in (\ref{vintr}) to a 2D-Young integrals under stronger regularity than previous version.

\section{Preliminaries}\label{preli}
Let $(\Omega, \mathcal{F}, \mathbb{F}, \mathbb{P})$ be a stochastic basis equipped with a one-dimensional Brownian motion $B$ where $\mathbb{F} = (\mathcal{F}_t)_{t\ge 0}$ is the $\mathbb{P}$-augmented filtration generated by the Brownian motion. For a given terminal time $0< T< \infty$, let $\mathbf{B}^p(\mathbb{F})$ be the Banach space of all $\mathbb{F}$-adapted real-valued c\`adl\`ag processes $X$ equipped with the norm 

\begin{equation}\label{B2norm}
\|X\|_{\mathbf{B}^p}:=\big(\mathbb{E}|X^*|^p\big)^{\frac{1}{p}},\quad \text{where}~X^*:=\sup_{0\le t\le T}|X(t)|,
\end{equation}
for $1\le p < \infty$. If $Y$ is a real-valued c\`adl\`ag process, then we denote $\Delta Y(t):=Y(t) - Y(t-)$ where $Y(t-)$ is the left-hand limit at time $t$. The elements of $\mathbf{B}^2(\mathbb{F})$ will be called square-integrable Wiener functionals. Throughout this article, we make use of the following standard notation: If $Y$ is a square-integrable semimartingale w.r.t a given filtration, then usual quadratic variation is denoted by $[Y,Y]$. We denote $L^2_a(\mathbb{P}\times Leb)$ as the usual subspace of real-valued $\mathbb{F}$-adapted processes such that

$$\mathbb{E}\int_0^T|X(t)|^2dt< \infty.$$
Let us denote $\mathbf{H}^2(\mathbb{F})$ as the space of square-integrable $\mathbb{F}$-martingales starting at zero. In order to make this paper self-contained, we recall the fundamental variational operators which describe the so-called weakly differentiable Wiener functionals as described in \cite{LOS}. The weak functional stochastic calculus developed in \cite{LOS} is constructed from a class of pure jump processes driven by suitable waiting times which describe the local behavior of the Brownian motion: For a given sequence $(\epsilon_k)_{k\ge 1}$ of positive numbers such that $\sum_{k\ge 1}\epsilon_k^2 < \infty$, we define $\mathcal{T}:=\{T^k_n; n\ge 0, k\ge 1\}$ as follows: We set $T^{k}_0:=0$ and

$$
T^{k}_n := \inf\{T^{k}_{n-1}< t <\infty;  |B(t) - B(T^{k}_{n-1})| = \epsilon_k\}, \quad n \ge 1.
$$
Then, we define

$$
A^{k} (t) := \sum_{n=1}^{\infty} \epsilon_k \sigma^{k}_n1\!\!1_{\{T^{k}_n\leq t \}};~t\ge0,
$$
where the size of the jumps $\{\sigma^{k}_n; n\ge 1\}$ is given by

$$
\sigma^{k}_n:=\left\{
\begin{array}{rl}
1; & \hbox{if} \ \Delta A^{k}(T^{k}_n) > 0 \\
-1;& \hbox{if} \ \Delta A^{k}(T^{k}_n)< 0. \\
\end{array}
\right.
$$
One can easily check that $\{\sigma^{k}_n; n\ge 1\}$ is an i.i.d sequence of $\frac{1}{2}$-Bernoulli random variables for each $k\ge 1$. Moreover, $\{\Delta A^{k}(T^{k}_n); n\ge 1\}$ is independent from $\{\Delta T^{k}_n; n\ge 1\}$ for each $k\ge 1$. By construction

$$
\sup_{t\ge 0}|A^{k}(t) - B(t)|\le \epsilon_k~a.s
$$
for every $k\ge 1$. Let $\mathbb{F}^{k} := \{ \mathcal{F}^{k}_t; t\ge 0 \} $ be the natural filtration generated by $\{A^{k}(t);  t \ge 0\}$. Let $\mathcal{F}^{k}_\infty$ be the completion of $\sigma(A^{k}(s); s\ge 0)$ and let $\mathcal{N}_{k}$ be the $\sigma$-algebra generated by all $\mathbb{P}$-null sets in $\mathcal{F}^{k}_\infty$. With a slight abuse of notation, we write $\mathbb{F}^{k} = (\mathcal{F}^{k}_t)_{t\ge 0}$, where $\mathcal{F}^{k}_t$ is the usual $\mathbb{P}$-augmentation (based on $\mathcal{N}_k$) satisfying the usual conditions. Of course, $\mathbb{F}^k\subset \mathbb{F}; k\ge 1$. One can check that $A^k$ is a square-integrable $\mathbb{F}^k$-martingale (see Lemma 2.1 in \cite{LOS}). Let us also denote 

\begin{equation}\label{numberhitting}
N^k(t) := \max\{n; T^k_n\le t\}; 0\le t\le T.
\end{equation}

\begin{definition}
The structure $\mathscr{D} = \{\mathcal{T}, A^{k}; k\ge 1\}$ is called a \textbf{discrete-type skeleton} for the Brownian motion.
\end{definition}
For a given choice of discrete-type skeleton $\mathscr{D}$, \cite{LOS} constructs a differential theory based on functionals written on $\mathscr{D}$. In the sequel, we present some of the basic objects developed in \cite{LOS}.

\begin{definition}
A \textbf{Wiener functional} is an $\mathbb{F}$-adapted continuous process which belongs to $\mathbf{B}^2(\mathbb{F})$.
\end{definition}

\begin{definition}\label{GASdef}
For a given discrete-type skeleton $\mathscr{D}$, we say that a pure jump $\mathbb{F}^k$-adapted process of the form

$$
X^k(t) = \sum_{n=0}^\infty X^{k}(T^k_n)1\!\!1_{\{T^k_n\le t < T^k_{n+1}\}}; 0\le t\le T,
$$
is a \textbf{good approximating sequence} (henceforth abbreviated by GAS) w.r.t $X$ if $\mathbb{E}[X^k, X^k|(T)<\infty$ for every $k\ge 1$ and

$$\lim_{k\rightarrow+\infty}X^k=X \quad \text{weakly in}~\mathbf{B}^2(\mathbb{F}).$$
\end{definition}

\begin{definition}\label{IDS}
An \textbf{imbedded discrete structure} $\mathcal{Y} = \big( (X^k)_{k\ge 1}, \mathscr{D}\big)$ for a Wiener functional $X$ consists of the following elements:
\begin{itemize}
  \item A discrete-type skeleton $\mathscr{D}=\{\mathcal{T}, A^{k}; k\ge 1\}$ for the Brownian state $B$.
  \item A GAS $\{X^k; k\ge 1\}$ w.r.t $X$ associated with the above discrete-type skeleton.
\end{itemize}
\end{definition}
\begin{remark}
We stress imbedded discrete structures exist for every continuous process in $\mathbf{B}^2(\mathbb{F})$. For more details, see Lemma 3.1 in \cite{LOS}.
\end{remark}

For a given Wiener functional $X$, let us choose an imbedded discrete structure $\mathcal{Y} = \big( (X^k)_{k\ge 1}, \mathscr{D}\big)$ associated with $X$. In order to characterize the infinitesimal variations of $X$ w.r.t $B$, the first step is to understand what happens at the level of a given structure $\mathcal{Y}$ and this will be achieved by means of suitable differential operators $(\mathbb{D}^{\mathcal{Y},k}X, \mathbb{U}^{\mathcal{Y},k} X)$ as introduced in \cite{LOS}. In order to define these basic operators properly, we make use of the so-called dual predictable projections commonly used in the classical French school of general theory of processes, which we summarize here for completeness. For further details, we refer the reader to the references \cite{he,dellacherie}.

For a given $\mathbb{F}^k$-adapted locally integrable increasing process $Y$, let $\mu_{Y}$ be the Dol\'eans measure generated by $Y$, i.e.,

$$\mu_{Y}(H):=\mathbb{E}\int_0^T\mathds{1}_{H}(s)dY(s); H\in \mathcal{F}^k_T\times \mathcal{B}([0,T]),$$
where $\mathcal{B}([0,T])$ is the Borel sigma algebra of $[0,T]$. In this case, we say $\mu_Y$ is generated by $Y$. In the sequel, $\mathcal{P}^k$ is the $\mathbb{F}^k$-predictable sigma-algebra of $[0,T]\times \Omega$. For a given Dol\'eans measure $\mu_Y$, we can associate a measure not charging evanescent sets as follows: For each non-negative bounded measurable process $Q$, we set

$$\mu^{k,p}_Y(Q):=\mu_Y(^{p,k}Q)$$
where $^{p,k}Q$ is the $\mathbb{F}^k$-predictable projection of $Q$ (see e.g Th 5.12 in \cite{he}). It is known (see e.g Th. 5.20 in \cite{he}), there exists a unique $\mathbb{F}^k$-predictable increasing process $Y^{p,k}$ such that $\mu^{k,p}_Y$ is generated by $Y^{p,k}$ and we call $Y^{p,k}$ as the $\mathbb{F}^k$-dual predictable projection of $Y$. This procedure clearly extends to any $\mathbb{F}^k$-adapted process $Y$ with locally integrable variation. Having said that, for a structure $\mathcal{Y} = \big((X^k)_{k\ge 1},\mathscr{D}\big)$, we then set

$$
\mathcal{D}^{\mathcal{Y},k}X(u) :=  \sum_{\ell=1}^{\infty} \frac{\Delta X^{k} (T^{k}_\ell)}{\Delta A^{k}(T^{k}_\ell)} 1\!\!1_{\{T^{k}_\ell=u\}}
$$
and
\begin{equation}\label{weakinfG}
U^{\mathcal{Y},k}X(u):=\mathbb{E}_{\mu_{[A^{k}]}}\Bigg[\frac{\mathcal{D}^{\mathcal{Y},k}X}{\Delta A^{k}}\Big|\mathcal{P}^k\Bigg](u);~
\end{equation}
for $0\le u\le T$ and $k\ge 1$, where we use the shorthand notation $[A^k] = [A^k,A^k]$. The process (\ref{weakinfG}) is the unique (up to sets of $\mu_{[A^{k}]}$-measure zero) $\mathbb{F}^k$-predictable process which realizes

$$\Big(\int_0^\cdot\frac{\mathcal{D}^{\mathcal{Y},k}X}{\Delta A^{k}}d [A^{k},A^{k}]\Big)^{p,k} = \int_0^\cdot \mathbb{E}_{\mu_{[A^{k}]}}\Big[\frac{\mathcal{D}^{\mathcal{Y},k}X}{\Delta A^{k}}\big|\mathcal{P}^k\Big] d\langle A^{k},A^{k}\rangle,$$
where it is understood that the stochastic process $\mathcal{D}^{\mathcal{Y},k}X/\Delta A^{k}$ is null on the complement of the union of stochastic intervals $\cup_{n=1}^\infty [[T^{k}_n,T^{k}_n]]$, where $[[T^k_n,T^k_n]]: = \{(t,\omega); T^k_n(\omega)=t\};n\ge 1$. Here, $\langle A^k,A^k\rangle$ is the angle bracket of the martingale $A^k$. See Section 3 in \cite{LOS} for more details. In addition, let us denote

\begin{eqnarray}
\label{intf}\oint_0^t\mathcal{D}^{\mathcal{Y},k}X(s)dA^{k} (s)&:=&\int_0^t\mathcal{D}^{\mathcal{Y},k}X(s) dA^{k} (s)\\
\nonumber& &\\
\nonumber&-&\Big(\int_0^{\cdot}\mathcal{D}^{\mathcal{Y},k} X(s) d A^{k} (s)\Big)^{p,k}(t),
\end{eqnarray}
where $\oint$ is the $\mathbb{F}^k$-optional integral as introduced by Dellacherie and Meyer (see Chap 8, section 2 in \cite{dellacherie}) for optional integrands and $dA^k$ in the right-hand side of (\ref{intf}) is interpreted in the Lebesgue-Stieljtes sense. Proposition 3.1 in \cite{LOS} allows us to decompose a given structure $\mathcal{Y}$ as follows

\begin{equation}\label{discretediffform}
X^{k}(t) = X^k(0) + \oint_0^t \mathbb{D}^{\mathcal{Y},k}X(s)dA^{k}(s) +\int_0^t\mathbb{U}^{\mathcal{Y},k} X (s) ds,~0\le t \le T,
\end{equation}
where
\begin{equation}\label{uncOPERATORS}
\mathbb{D}^{\mathcal{Y},k} X(t) :=  \sum_{\ell=1}^{\infty} \mathcal{D}^{\mathcal{Y},k}X(t) 1\!\!1_{\{T^{k}_\ell\le t<  T^{k}_{\ell+1}\}},\quad \mathbb{U}^{\mathcal{Y},k}X(t) := U^{\mathcal{Y},k}X(t) \frac{d\langle A^{k}, A^{k}\rangle}{dt}
\end{equation}
for $0\le t\le T$. The differential form (\ref{discretediffform}) is the starting point to analyse limiting differential forms describing Wiener functionals. In order to study the asymptotic properties of the variational operators, it is necessary to impose the following compactness assumption:

\begin{definition}\label{fenergy}
Let $\mathcal{Y}=\big( (X^k)_{k\ge 1}; \mathscr{D}\big)$ be an imbedded discrete structure for a Wiener functional $X$. We say that $\mathcal{Y}$ has \textbf{finite energy} if
$$\mathcal{E}^{2,\mathcal{Y}}(X):=\sup_{k\ge 1}\mathbb{E}\sum_{n\ge 1}|\Delta X^{k}(T^k_n)|^21\!\!1_{\{T_{n}^k \leq T\}} < \infty.$$
\end{definition}
In addition to compactness, we need a more refined notion of structures associated with Wiener functionals as follows. From (\ref{discretediffform}), we know that each imbedded discrete structure $\mathcal{Y}=\big( (X^k)_{k\ge 1}; \mathscr{D}\big)$ carries a sequence of $\mathbb{F}^k$-special semimartinagle decompositions

$$X^k(t) = X^k(0)+ M^{\mathcal{Y},k}(t) + N^{\mathcal{Y},k}(t); 0\le t\le T,$$
where $M^{\mathcal{Y},k}$ is an $\mathbb{F}^k$-square-integrable martingale and $N^{\mathcal{Y},k}$ is an $\mathbb{F}^k$-predictable absolutely continuous process.
\begin{definition}\label{stabledef}
Let $X=X(0) + M + N$ be an $\mathbb{F}$-adapted process such that $M\in \mathbf{H}^2(\mathbb{F})$ and $N\in \mathbf{B}^2(\mathbb{F})$ has continuous paths. An imbedded discrete structure $\mathcal{Y}=\big( (X^k)_{k\ge 1}; \mathscr{D}\big)$ for $X$ is said to be \textbf{stable} if $M^{\mathcal{Y},k}\rightarrow M$ weakly in $\mathbf{B}^2(\mathbb{F})$ as $k\rightarrow+\infty$.
\end{definition}

Let us now devote our attention to the study of the asymptotic properties of

$$\mathbb{D}^{\mathcal{Y},k}X$$
for an imbedded discrete structure $\mathcal{Y} = \big((X^k)_{k\ge 1}, \mathscr{D}\big)$ w.r.t $X$. We set

\begin{equation}\label{DX}
\mathcal{D}^{\mathcal{Y}}X:=\lim_{k\rightarrow+\infty}\mathbb{D}^{\mathcal{Y},k}X~\text{weakly in}~L^2_a(\mathbb{P}\times Leb)
\end{equation}
whenever the right-hand side of (\ref{DX}) exists for a given finite-energy embedded discrete structure $\mathcal{Y}$.

\begin{definition}\label{defweakder}
Let $X= X(0) + \int HdB + V$ be an $\mathbb{F}$-adapted process such that $H\in L^2_a(\mathbb{P}\times Leb)$ and $V\in \mathbf{B}^2(\mathbb{F})$ has continuous paths. We say that $X$ is \textbf{weakly differentiable} if there exists a finite energy imbedded discrete structure $\mathcal{Y} =\big((X^k)_{k\ge 1},\mathscr{D}\big)$ such that $\mathcal{D}^{\mathcal{Y}}X$ exists. The space of weakly differentiable processes will be denoted by $\mathcal{W}(\mathbb{F})$.
\end{definition}
If there exists a strictly increasing sequence of stopping times $\{T_N;N\ge 1\}$ such that the stopped process $X(\cdot \wedge T_N)\in \mathcal{W}(\mathbb{F})$ for every $N\ge 1$ and $T_N\uparrow+\infty~a.s$ as $N\rightarrow+\infty$, we then say that $X$ is locally weakly differentiable.

\begin{remark}\label{indeps}
Theorem 4.1 in \cite{LOS} shows that if $X$ is a weakly differentiable Wiener functional, then it has the form
$$X(t) = X(0) + \int_0^tH(s)dB(s) + V(t); 0\le t\le T,$$
where $H\in L^2_a(\mathbb{P}\times Leb)$ and $V\in \mathbf{B}^2(\mathbb{F})$ has continuous paths. Moreover, $\mathcal{D}^\mathcal{Y}X = H$ for every stable imbedded discrete structure $\mathcal{Y}$ associated with $X$.
\end{remark}

Let $X\in \mathcal{W}(\mathbb{F})$ with a decomposition

$$
X = X(0)+ \int H dB + V
$$
where $H\in L^2_a(\mathbb{P}\times Leb)$ and $V\in \mathbf{B}^2(\mathbb{F})$ is a continuous process. Then, Remark \ref{indeps} allows us to define

$$
\mathcal{D}X := \mathcal{D}^\mathcal{Y}X
$$
for every stable imbedded discrete structure $\mathcal{Y} =\big((X^k)_{k\ge 1},\mathscr{D}\big)$ w.r.t $X$ and, in this case, $\mathcal{D}X = H$. The weak differentiability notion requires existence of $\mathcal{D}^\mathcal{Y}X$ for a finite energy imbedded discrete structure $\mathcal{Y} = \big((X^k)_{k\ge 1},\mathscr{D}\big)$ and, from Remark \ref{indeps}, this concept of derivative does not depend on the choice of the stable structure $\mathcal{Y}$.

Let us now finish this section by recalling the following result.

\

\textbf{Proposition 4.2} of~\cite{LOS}: If $X\in \mathcal{W}(\mathbb{F})$ is a weakly differentiable process with a decomposition

$$X(t) = X(0) + \int_0^t\mathcal{D}X(s)dB(s) + V_X(t); 0\le t\le T,$$
then

\begin{equation}\label{allort}
V_X(\cdot)  = \lim_{k\rightarrow+\infty}\int_0^\cdot\mathbb{U}^{\mathcal{Y},k}X(s)ds
\end{equation}
weakly in $\mathbf{B}^2(\mathbb{F})$ as $k\rightarrow +\infty$ for every stable imbedded discrete structure $\mathcal{Y}$ associated with $X$.

\

The remainder of this paper is devoted to investigate the probabilistic structure of the limit (\ref{allort}). In \cite{LOS}, the authors investigate in detail many types of drifts encoded by (\ref{allort}). For instance, the existence of the limit $\lim_{k\rightarrow+\infty}\mathbb{U}^{\mathcal{Y},k}X$ (weakly in $L^1_a(\mathbb{P}\times Leb)$) implies $X$ is an It\^o process and, more generally, finite $1\le p < 2$-variation drifts of strong Dirichlet processes are determined by (\ref{allort}). The next sections aims to deepen the analysis of (\ref{allort}) towards a variational characterization of drifts exploring the concepts of local-times, horizontal-type and first-order vertical-type perturbations.

\section{Occupation times and weak differentiability}\label{LOCALTIMESEC}
In this section, we aim to present a universal characterization of the non-martingale component $V_X$ of a given weakly differentiable process $X$ by exploring the relation between weak differentiability and a suitable two-parameter \textit{occupation time} process associated with $\mathscr{D}$.
It is relatively well-understood the role of the horizontal and second-order vertical derivatives in It\^o-type decompositions of smooth functionals applied to state processes (even non-semimartingales \cite{keller,cont1}). The weak formulations of \cite{cont2,peng2} based on abstract completion on cylindrical functionals only deals with situations where the drift term is absolutely continuous. In the remainder of this article, we show the class of weak differentiable processes contains a large class of Wiener functionals with irregular drift components. The typical example we have in mind are processes whose drift admits $q$-finite variation for $2\le q < \infty$. See Theorem \ref{ulocaltime} and Section \ref{pqregularitysection}.

\subsection{Splitting the weak infinitesimal generator}
Let $C([0,t];\mathbb{R})$ ($D([0,t];\mathbb{R})$) be the spaces of real-valued continuous (c\`adl\`ag) functions defined over $[0,t]$. We say that $F:[0,T]\times D([0,T];\mathbb{R})\rightarrow\mathbb{R}$ is a \textit{non-anticipative functional} if

\begin{equation}\label{nonantiintr}
F_t(c) = F_t(c_t); 0\le t\le T
\end{equation}
for each $c\in D([0,T];\mathbb{R})$ where $c_t: = c(\cdot \wedge t); 0\le t\le T$. In order to make clear the information encoded by a path $c \in D([0,T];\mathbb{R})$ up to a given time $0\le t\le T$, we then view $c_t= \{c(s): 0 \leq s \leq t \}$ and the value of $c$ at time $0 \leq u \leq T$ is denoted by $c(u)$. This notation is naturally extended to adapted processes. In the sequel, for a given $s\in (0,T]$, let us define

$$D([0,s];\mathbb{R})\times \mathbb{R}\rightarrow D([0,s];\mathbb{R})$$
$$(\eta,x)\mapsto \textbf{t}(\eta_s, x)$$
where we set
\begin{equation}\label{terminalM}
\textbf{t}(\eta_s,x)(u):=\left\{
\begin{array}{rl}
\eta(u); & \hbox{if}~0\le u<s \\
x;& \hbox{if}~u=s.
\end{array}
\right.
\end{equation}
For any Wiener functional $X$, Doob-Dynkin's theorem yields the existence of a non-anticipative functional~\footnote{Doob-Dynkin's theorem yields the existence of a non-anticipative functional $\hat{F}:[0,T]\times C([0,T];\mathbb{R})\rightarrow\mathbb{R}$ such that $X(t)=F_t(B_t)$ and we assume there exists a non-anticipative functional $F:[0,T]\times D([0,T];\mathbb{R})\rightarrow\mathbb{R}$ which is consistent to $\hat{F}$ in the sense that $F = \hat{F}$ on $[0,T]\times C([0,T];\mathbb{R})$.} $F$ such that

$$X(t) =F_t(B_t) = F_t(\textbf{t}(B_t, B(t)))~a.s, 0\le t\le T,$$
where $B(t)$ is the value of the Brownian path at time $t$.

At first, we need to split the $\mathbb{F}^k$-weak infinitesimal generator (\ref{weakinfG}) into two components. For this purpose, we set $\Lambda:=\{(t,\omega_t); (t,\omega)\in [0,T]\times D([0,T];\mathbb{R})\}$ and $\hat{\Lambda}: = \{(t,\omega_t); (t,\omega)\in [0,T]\times C([0,T];\mathbb{R})\}$. Let $X$ be a Wiener functional and let $\mathcal{Y}= \big((X^k)_{k\ge 1},\mathscr{D}\big)$ be an imbedded discrete structure for $X$. By Doob-Dynkin's theorem, there exist functionals $F$ and $F^k$ defined on $\hat{\Lambda}$ and $\Lambda$, respectively, such that
$\hat{F}_t(B_t)=  X(t); 0\le t\le T$ and

\begin{equation}\label{funcrep}
F^k_t(A^k_t)= X^k(t);~0\le t\le T.
\end{equation}
In the remainder of the paper, when we write $F(B)$ for a given non-anticipative functional $F$ defined on $\Lambda$ it is implicitly assumed that we are fixing a functional $F$ which is consistent to $\hat{F}$ in the sense that
$$F_t(x_t) = \hat{F}_t(x_t)~\text{for every}~x\in C([0,T];\mathbb{R}).$$
Since we are dealing only with adapted processes, throughout this section we assume that all functionals are non-anticipative. Let us fix $X\in \mathbf{B}^2(\mathbb{F})$ and functionals $F^k$ and $F$ defined on $\Lambda$ and realizing, respectively, (\ref{funcrep}) and

\begin{equation}\label{functionalrepresentation}
X(t) = F_t(B_t); 0\le t\le T.
\end{equation}
The first task is to split $U^{k,\mathcal{Y}}X$ into two components which encode different modes of regularity. In the sequel, it is convenient to introduce the following perturbation scheme

$$A^{k, i\epsilon_k}_{t-} (u):= A^k(u),~0 \le u < t\quad \text{and}~A^{k, i\epsilon_k}_{t-} (t) := A^k (t-) + i\epsilon_k,$$
for $i=-1 ,0,1$ and to shorten notation, we denote $A^{k}_{t-} := A^{k,0}_{t-}$ for $0\le t\le T.$ Since $X^k$ is a pure jump process, then

\begin{equation}\label{natgood}
X^k(t) = F^k_t(A^k_t)= \sum_{\ell=0}^\infty F^k_{T^k_\ell}(A^k_{T^k_\ell})1\!\!1_{\{T^{k}_{\ell}\le t < T^{k}_{\ell + 1}\}},~0\le t\le T.
\end{equation}

A simple calculation based on the i.i.d family $\Delta A^k(T^k_n)$ of Bernoulli variables with parameter $1/2$ yields the following splitting.

\begin{lemma}\label{splittinglemma}
If $\mathcal{Y} = \big((X^k)_{k\ge1},\mathscr{D}\big)$ is an imbedded discrete structure (with a functional representation~(\ref{funcrep})) associated with $X$, then
$$
U^{\mathcal{Y},k}X = \mathcal{D}^{\mathcal{Y},k,h}X + \frac{1}{2}\mathcal{D}^{\mathcal{Y},k,2}X
$$
where

\begin{equation}\label{ddkh}
\mathcal{D}^{\mathcal{Y},k,h}X(t):=
\sum_{n=1}^\infty \frac{1}{\epsilon^2_k} [ F^k_{t} (A^{k}_{t-})  - F^k_{T^k_{n-1}} (A^k_{T^k_{n-1}}) ]
1\!\!1_{\{ T_{n-1}^k <t \le T_{n}^k\}}
\end{equation}
and
\begin{equation}\label{ddk2}
\mathcal{D}^{\mathcal{Y},k,2}X(t):=
\frac{1}{\epsilon^2_k}  [F^k_{t} (A^{k, \epsilon_k}_{t-}) + F^k_{t} (A^{k, -\epsilon_k}_{t-}) - 2F^k_{t} (A^{k}_{t-}) ];~0\le t\le T
\end{equation}
are $\mathbb{F}^k$-predictable processes.
\end{lemma}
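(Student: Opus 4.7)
The plan is to directly compute the $\mathcal{P}^k$-conditional expectation under the Dol\'eans measure $\mu_{[A^k]}$ by decomposing each jump $\Delta X^k(T^k_\ell)$ into a ``time-only'' piece and a ``space-only'' piece at the predictable stopping time $T^k_\ell$, and then to match the resulting expressions with $\mathcal{D}^{\mathcal{Y},k,h}X$ and $\mathcal{D}^{\mathcal{Y},k,2}X$ evaluated at $T^k_\ell$. Concretely, since $\mu_{[A^k]}$ is supported on $\cup_\ell [\![T^k_\ell,T^k_\ell]\!]$ with $|\Delta A^k(T^k_\ell)|=\epsilon_k$, the defining identity of $U^{\mathcal{Y},k}X$ reads, for every bounded $\mathbb{F}^k$-predictable $\varphi$,
$$
\mathbb{E}\sum_{\ell\ge 1}\varphi(T^k_\ell)\,\frac{\Delta X^k(T^k_\ell)}{\epsilon_k^2}\cdot \epsilon_k^2 \;=\; \mathbb{E}\sum_{\ell\ge 1}\varphi(T^k_\ell)\,U^{\mathcal{Y},k}X(T^k_\ell)\cdot \epsilon_k^2,
$$
so $U^{\mathcal{Y},k}X(T^k_\ell) = \epsilon_k^{-2}\,\mathbb{E}[\Delta X^k(T^k_\ell)\mid\mathcal{F}^k_{T^k_\ell-}]$ up to $\mu_{[A^k]}$-null sets, since each $T^k_\ell$ is $\mathbb{F}^k$-predictable.

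Using the functional representation \eqref{funcrep} and inserting the auxiliary term $F^k_{T^k_\ell}(A^{k,0}_{T^k_\ell-})$ (noting $A^k(T^k_\ell-)=A^k(T^k_{\ell-1})$), I would write
$$
\Delta X^k(T^k_\ell) \;=\; \bigl[F^k_{T^k_\ell}(A^{k,0}_{T^k_\ell-}) - F^k_{T^k_{\ell-1}}(A^k_{T^k_{\ell-1}})\bigr] \;+\; \bigl[F^k_{T^k_\ell}(A^k_{T^k_\ell}) - F^k_{T^k_\ell}(A^{k,0}_{T^k_\ell-})\bigr].
$$
The first bracket is $\mathcal{F}^k_{T^k_\ell-}$-measurable because $T^k_\ell$ is $\mathbb{F}^k$-predictable and the perturbed path $A^{k,0}_{T^k_\ell-}$ depends only on $(A^k(u))_{u<T^k_\ell}$ and on $A^k(T^k_\ell-)$. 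For the second bracket, note that on $\{T^k_\ell<\infty\}$ one has $A^k_{T^k_\ell} = A^{k,\sigma^k_\ell\epsilon_k}_{T^k_\ell-}$, and by construction $\sigma^k_\ell$ is an independent symmetric Bernoulli sign, independent of $\mathcal{F}^k_{T^k_\ell-}$; therefore the conditional average yields $\tfrac{1}{2}[F^k_{T^k_\ell}(A^{k,\epsilon_k}_{T^k_\ell-})+F^k_{T^k_\ell}(A^{k,-\epsilon_k}_{T^k_\ell-})-2F^k_{T^k_\ell}(A^{k,0}_{T^k_\ell-})]$.

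Dividing by $\epsilon_k^2$ and comparing with the formulas \eqref{ddkh} and \eqref{ddk2} evaluated at $t=T^k_\ell$ (so that $n=\ell$ and $T^k_{n-1}=T^k_{\ell-1}$) shows the identity $U^{\mathcal{Y},k}X = \mathcal{D}^{\mathcal{Y},k,h}X + \tfrac{1}{2}\mathcal{D}^{\mathcal{Y},k,2}X$ on the support of $\mu_{[A^k]}$, which is all that the $\mu_{[A^k]}$-a.e.\ uniqueness of the predictable projection requires. A final, brief verification is that the two extensions given by \eqref{ddkh}--\eqref{ddk2} are genuinely $\mathbb{F}^k$-predictable on the whole of $[0,T]$: on each stochastic interval $(T^k_{n-1},T^k_n]$ the data $(T^k_{n-1},A^k_{T^k_{n-1}})$ is $\mathcal{F}^k_{T^k_{n-1}}$-measurable and the remaining dependence is a deterministic function of $t$, which places both processes in the $\mathbb{F}^k$-predictable $\sigma$-field.

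I do not anticipate a genuine obstacle here: the argument is essentially bookkeeping. The only point requiring some care is to justify that $\sigma^k_\ell$ is independent of $\mathcal{F}^k_{T^k_\ell-}$ (and not merely of $\{\Delta T^k_n\}$ as stated in the preliminaries); this follows from the strong Markov property of $B$ at $T^k_{\ell-1}$ combined with the definition of $T^k_\ell$ as an exit time of a symmetric $\epsilon_k$-interval, which makes the post-$T^k_{\ell-1}$ sign independent of the entire pre-$T^k_\ell$ history of $A^k$.
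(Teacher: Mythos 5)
Your overall strategy coincides with the paper's: split the conditional increment at each $T^k_\ell$ into a ``horizontal'' bracket that is measurable with respect to the strict past and a ``vertical'' bracket whose conditional average over the symmetric Bernoulli sign $\sigma^k_\ell$ produces the second difference $F^k_t(A^{k,\epsilon_k}_{t-})+F^k_t(A^{k,-\epsilon_k}_{t-})-2F^k_t(A^{k}_{t-})$, and then verify predictability by exhibiting each process on $(T^k_{n-1},T^k_n]$ as a measurable function of $t$ and $\mathcal{F}^k_{T^k_{n-1}}$-data. The paper reaches the same splitting by quoting Lemma 3.2 of \cite{LOS} for the conditional-expectation representation of $U^{\mathcal{Y},k}X$, whereas you rederive it from the Dol\'eans-measure duality; that is a legitimate alternative and arguably more self-contained.

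There is, however, one genuinely wrong claim in your justification: the times $T^k_\ell$ are \emph{not} $\mathbb{F}^k$-predictable. They are totally inaccessible in the filtration generated by $A^k$: conditionally on $\mathcal{F}^k_{T^k_{\ell-1}}$ the waiting time $\Delta T^k_\ell$ has a diffuse law (the Brownian exit time of $(-\epsilon_k,\epsilon_k)$) and $\mathbb{F}^k$ carries no new information strictly between jumps, so no announcing sequence can exist; equivalently, the compensator $\langle A^k,A^k\rangle$ of $[A^k,A^k]$ could not be (absolutely) continuous --- as the paper uses throughout, e.g.\ in (\ref{uncOPERATORS}) --- if the jump times of $[A^k,A^k]$ were predictable. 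Consequently you cannot invoke the defining property of predictable projections at predictable times to write $U^{\mathcal{Y},k}X(T^k_\ell)=\epsilon_k^{-2}\,\mathbb{E}[\Delta X^k(T^k_\ell)\mid\mathcal{F}^k_{T^k_\ell-}]$, nor cite predictability of $T^k_\ell$ for the $\mathcal{F}^k_{T^k_\ell-}$-measurability of the horizontal bracket. The step is repairable, and the repair stays inside your own duality argument: for any bounded $\mathbb{F}^k$-predictable $\varphi$ and \emph{any} stopping time $T^k_\ell$, the variable $\varphi(T^k_\ell)1\!\!1_{\{T^k_\ell<\infty\}}$ is $\mathcal{F}^k_{T^k_\ell-}$-measurable (see e.g.\ \cite{he}), so you may insert $\mathbb{E}[\,\cdot\mid\mathcal{F}^k_{T^k_\ell-}]$ inside the sum term by term; moreover $\mathcal{F}^k_{T^k_\ell-}=\mathcal{F}^k_{T^k_{\ell-1}}\vee\sigma(T^k_\ell)$ because $A^k$ is constant on $[T^k_{\ell-1},T^k_\ell)$, which gives the measurability of the horizontal bracket and matches the conditioning $\mathbb{E}[\,\cdot\mid\mathcal{F}^k_{T^k_{n-1}};T^k_n=t]$ used in the paper. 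With that correction, your computation of the conditional average of the vertical bracket via the strong Markov property and the symmetry of the exit side is exactly the paper's argument, and the final predictability check agrees with the paper's construction of the $\mathcal{F}^k_{T^k_{n-1}}\otimes\mathcal{B}(\mathbb{R}_+)$-measurable versions.
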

\begin{proof}
The proof is a simple computation as follows: From Lemma 3.2 in \cite{LOS}, we know that

$$U^{\mathcal{Y},k} X(t) = \frac{1}{\epsilon^2_k}\sum_{n=1}^\infty
\mathbb{E}[F^k_{T^k_n} (A^k_{T^k_n}) - F^k_{T^k_{n-1}} (A^k_{T^k_{n-1}})| \mathcal{F}^k_{T^k_{n-1}}; T^k_n=t]
1\!\!1_{\{ T_{n-1}^k <t \le T_{n}^k\}}$$
for $t>0$. Then, for $t>0$ and $k\ge 1$, we have

\begin{eqnarray*}
 U^{\mathcal{Y},k} X(t) &=& \frac{1}{\epsilon^2_k}\sum_{n=1}^\infty
\mathbb{E}[F^k_{T^k_n} (A^k_{T^k_n}) - F^k_{T^k_{n-1}} (A^k_{T^k_{n-1}})| \mathcal{F}^k_{T^k_{n-1}}; T^k_n=t]
1\!\!1_{\{ T_{n-1}^k <t \le T_{n}^k\}} \\
&=& \frac{1}{\epsilon^2_k}\sum_{n=1}^\infty
\mathbb{E}[F^k_{t} (A^k_{t}) - F^k_{T^k_{n-1}} (A^k_{T^k_{n-1}})| \mathcal{F}^k_{T^k_{n-1}}; T^k_n=t]
1\!\!1_{\{ T_{n-1}^k <t \le T_{n}^k\}} \\
&=&  \frac{1}{\epsilon^2_k}\sum_{n=1}^\infty
\frac{1}{2} \Big[F^k_t (A^{k, \epsilon_k}_{t^{-}}) + F^k_t (A^{k, -\epsilon_k}_{t^{-}}) - 2F^k_{t} (A^{k}_{t^{-}}) \\
&+&2F^k_{t} (A^{k}_{t^{-}})  - 2F^k_{T^k_{n-1}} (A^k_{T^k_{n-1}}) \Big]
1\!\!1_{\{ T_{n-1}^k <t \le T_{n}^k\}} \\
&=& \frac{1}{2}
\frac{[F^k_t (A^{k, \epsilon_k}_{t^{-}}) + F^k_t (A^{k, -\epsilon_k}_{t^{-}}) - 2F^k_{t} (A^{k}_{t^{-}})]}{\epsilon_k^2}\\
& &\\
&+& \sum_{n=1}^\infty \frac{1}{\epsilon^2_k} [ F^k_{t} (A^{k}_{t-}  - F^k_{T^k_{n-1}} (A^k_{T^k_{n-1}}) ]
1\!\!1_{\{ T_{n-1}^k <t \le T_{n}^k\}}.
\end{eqnarray*}
Let us now check $\mathbb{F}^k$-predictability. For each $n,k\ge 1$, we define

$$
A^{k,\infty}_{T^k_{n-1}}(u):=\left\{
\begin{array}{rl}
A^k(u); & \hbox{if} \ 0\le u < T^k_{n-1} \\
A^k(T^k_{n-1});& \hbox{if} \ T^k_{n-1}\le u < \infty \\
\end{array}
\right.
$$
and we set $A^{k,\infty}_{T^k_{n-1},t}:=\text{restriction of}~A^{k,\infty}_{T^k_{n-1}}~\text{over the interval}~[0,t]$. We also define

$$
A^{k, i\epsilon_k}_{T^k_{n-1},t} (u):=\left\{
\begin{array}{rl}
A^{k,\infty}_{T^k_{n-1},t}(u); & \hbox{if} \ 0\le u < t \\
A^k (T^k_{n-1}) + i\epsilon_k;& \hbox{if} \ u=t \\
\end{array}
\right.
$$
where $i=-1,0,1$ and we denote $A^{k}_{T^k_{n-1},t}  = A^{k, 0}_{T^k_{n-1},t}; n,k\ge 1$. By the very definition,

$$A^{k,i\epsilon_k}_{t-} = A^{k,i\epsilon_k}_{T^k_{n-1},t} ~\text{on}~ \{T^k_{n-1} < t \le T^k_{n}\}$$
and $F^k_\cdot (A^{k,i\epsilon_k}_{T^k_{n-1},\cdot})$ is $\mathcal{F}^k_{T^k_{n-1}}\otimes \mathcal{B}(\mathbb{R}_+)$-measurable for every $k,n\ge 1$ and $i=-1,0,1$. Therefore, $F^k_t(A^{k,i\epsilon_k}_{t-})= F^k_t(A^{k,i\epsilon_k}_{T^k_{n-1},t})$ on $\{T^k_{n-1}< t \le T^k_n\}$ for every $k,n\ge 1$ and hence, we shall write

$$\mathcal{D}^{\mathcal{Y},k,h}X(t)=
\sum_{n=1}^\infty \frac{1}{\epsilon^2_k} [ F^k_t(A^{k}_{T^k_{n-1},t})  - F^k_{T^k_{n-1}} (A^k_{T^k_{n-1}}) ]
1\!\!1_{\{ T_{n-1}^k <t \le T_{n}^k\}}
$$
and

$$\mathcal{D}^{\mathcal{Y},k,2}X(t)=
\sum_{n=1}^\infty \frac{1}{\epsilon^2_k} [F^k_t (A^{k, \epsilon_k}_{T^k_{n-1},t}) + F^k_t (A^{k, -\epsilon_k}_{T^k_{n-1},t}) - 2F^k_t (A^{k}_{T^k_{n-1},t})]
1\!\!1_{\{ T^k_{n-1} <t \le T^k_{n}\}}.
$$
Now we may conclude (see e.g Th 5.5 in \cite{he}) that both $\mathcal{D}^{\mathcal{Y},k,h}X$ and $\mathcal{D}^{\mathcal{Y},k,2}X$ are $\mathbb{F}^k$-predictable processes.
\end{proof}

\begin{remark}
The operators $\mathcal{D}^{\mathcal{Y},k,h}$ and $\mathcal{D}^{\mathcal{Y},k,2}$ encode variation in the same spirit of the horizontal and second order vertical derivatives used in the pathwise functional calculus, but with one fundamental difference: In contrast to the pathwise calculus where shifts are deterministic, the increments in the operators~(\ref{ddkh}) and~(\ref{ddk2}) are driven by the stopping times $\{T^k_n; n\ge 1\}$ and the Bernoulli variables $\{\Delta A^k(T^k_n); n\ge 1\}$.
\end{remark}

\begin{example} Let $\mathcal{F}=\big((\textbf{F}^k)_{k\ge 1}, \mathscr{D}\big)$ be the functional imbedded discrete structure associated with $X=F(B)$ defined by

\begin{equation}\label{funcSTRUCTURE}
\mathbf{F}^k(t)=\sum_{n=0}^\infty F_{T^k_n}(A^k_{T^k_n})1\!\!1_{\{ T_{n}^k \le t \le T_{n+1}^k\}};0\le t\le T.
\end{equation}
If $F$ is a $\Lambda$-continuous non-anticipative functional in the sense of \cite{dupire} and $\sup_{k\ge 1}\|F(A^k)\|_{\mathbf{B}^2}< \infty$,  then $\lim_{k\rightarrow+\infty}\mathbf{F}^k = X$ weakly in $\mathbf{B}^2(\mathbb{F})$. For this type of imbedded structure, we have

$$
\mathcal{D}^{\mathcal{F},k,h}X(t)=
\sum_{n=1}^\infty \frac{1}{\epsilon^2_k} [ F_{t} (A^{k}_{t-})  - F_{T^k_{n-1}} (A^k_{T^k_{n-1}}) ]
1\!\!1_{\{ T_{n-1}^k <t \le T_{n}^k\}}$$
and
$$\mathcal{D}^{\mathcal{F},k,2}X(t)=
\frac{1}{\epsilon^2_k}  [F_{t} (A^{k, \epsilon_k}_{t-}) + F_{t} (A^{k, -\epsilon_k}_{t-}) - 2F_{t} (A^{k}_{t-}) ];~0\le t\le T.$$
\end{example}

For a given structure $\mathcal{Y}$ w.r.t $X$, we denote

\begin{equation}\label{ykh}
\mathbb{D}^{\mathcal{Y},k,h}X: = \mathcal{D}^{\mathcal{Y},k,h}X\frac{d \langle A^{k}, A^{k}\rangle}{dt}\quad\mathbb{D}^{\mathcal{Y},k,2}X: = \mathcal{D}^{\mathcal{Y},k,2}X\frac{d \langle A^{k}, A^{k}\rangle}{dt}.
\end{equation}
The following corollary is a simple consequence of Lemma \ref{splittinglemma} and (\ref{discretediffform}).

\begin{corollary}\label{reprewithsplit}
Let $\mathcal{Y} = \big((X^k)_{k\ge1},\mathscr{D}\big)$ be an imbedded discrete structure (with functional representation (\ref{funcrep})) associated with $X$. If

\begin{equation}\label{i1}
\mathbb{E}\int_0^T\big(|\mathcal{D}^{\mathcal{Y},k,2}X(s)| + |\mathcal{D}^{\mathcal{Y},k,h}X(s)|\big)d\langle A^k, A^k\rangle(s) <\infty;~k\ge 1,
\end{equation}
then

$$
X^k(t) = X^k(0) + \oint_0^t\mathbb{D}^{\mathcal{Y},k}X(s) dA^k(s) + \int_0^t \Big(\mathbb{D}^{\mathcal{Y},k,h}X(s)+
\frac{1}{2}\mathbb{D}^{\mathcal{Y},k,2}X(s)\Big)ds
$$
for $0\le t\le T$, $k\ge 1$.
\end{corollary}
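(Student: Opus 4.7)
The plan is to combine the decomposition (\ref{discretediffform}) with the splitting provided by Lemma \ref{splittinglemma}, using the integrability hypothesis (\ref{i1}) only to justify that the resulting integral may be split into two well-defined pieces.

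First I would recall the differential form (\ref{discretediffform}):
$$X^{k}(t) = X^k(0) + \oint_0^t \mathbb{D}^{\mathcal{Y},k}X(s)dA^{k}(s) + \int_0^t\mathbb{U}^{\mathcal{Y},k} X (s)\, ds,$$
which is available for any imbedded discrete structure. Next I would rewrite the drift integrand using the definition in (\ref{uncOPERATORS}), namely $\mathbb{U}^{\mathcal{Y},k}X = U^{\mathcal{Y},k}X \cdot d\langle A^k,A^k\rangle/dt$, and then apply Lemma \ref{splittinglemma}, which identifies $U^{\mathcal{Y},k}X = \mathcal{D}^{\mathcal{Y},k,h}X + \tfrac{1}{2}\mathcal{D}^{\mathcal{Y},k,2}X$. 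Multiplying both sides by the Radon--Nikodym density $d\langle A^k,A^k\rangle/dt$ and invoking the definitions in (\ref{ykh}) yields the pointwise identity
$$\mathbb{U}^{\mathcal{Y},k}X(s) = \mathbb{D}^{\mathcal{Y},k,h}X(s) + \tfrac{1}{2}\mathbb{D}^{\mathcal{Y},k,2}X(s)$$
for a.e.\ $s \in [0,T]$.

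The only remaining point is to justify breaking the single Lebesgue integral $\int_0^t \mathbb{U}^{\mathcal{Y},k}X(s)\, ds$ into the sum $\int_0^t \mathbb{D}^{\mathcal{Y},k,h}X(s)\, ds + \tfrac{1}{2}\int_0^t \mathbb{D}^{\mathcal{Y},k,2}X(s)\, ds$. This is where hypothesis (\ref{i1}) enters: since the change of variable between $ds$ and $d\langle A^k,A^k\rangle$ gives
$$\mathbb{E}\int_0^T |\mathbb{D}^{\mathcal{Y},k,h}X(s)|\, ds = \mathbb{E}\int_0^T |\mathcal{D}^{\mathcal{Y},k,h}X(s)|\, d\langle A^k,A^k\rangle(s),$$
and analogously for the second-order piece, condition (\ref{i1}) guarantees that both Lebesgue integrals are a.s.\ absolutely convergent and hence separately well-defined. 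Linearity of the Lebesgue integral then delivers the claimed decomposition.

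The step I would expect to require most care is the Radon--Nikodym manipulation: one must verify that $d\langle A^k,A^k\rangle \ll dt$ (this follows from the representation of $\langle A^k,A^k\rangle$ recalled in the preliminaries via the continuous-time random walk structure of $\mathscr{D}$), and that the identity $\mathbb{U}^{\mathcal{Y},k}X = U^{\mathcal{Y},k}X \cdot d\langle A^k,A^k\rangle/dt$ is understood in the a.e.\ sense so that the pointwise splitting obtained from Lemma \ref{splittinglemma} indeed transfers to $\mathbb{U}^{\mathcal{Y},k}X$. Once this is in hand, the corollary follows essentially by substitution.
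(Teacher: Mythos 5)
Your proposal is correct and follows exactly the route the paper intends: the paper itself gives no written proof beyond noting the corollary is "a simple consequence of Lemma \ref{splittinglemma} and (\ref{discretediffform})", and your argument—substituting the splitting of $U^{\mathcal{Y},k}X$ into the drift term of (\ref{discretediffform}) via (\ref{uncOPERATORS}) and (\ref{ykh}), then using (\ref{i1}) together with the change of clock between $ds$ and $d\langle A^k,A^k\rangle$ to split the integral—is precisely that consequence spelled out. The absolute continuity $d\langle A^k,A^k\rangle\ll dt$ you flag is indeed the one point needing a citation, and it is supplied by Lemma 2.1 of \cite{LOS} as the paper notes after (\ref{ocid1}).
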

Let us denote

\begin{equation}\label{SMOOTHDER}
\mathcal{D}^{h,\mathcal{Y}}X:=\lim_{k\rightarrow\infty}\mathbb{D}^{\mathcal{Y},k,h}X, \quad \mathcal{D}^2X:=\lim_{k\rightarrow\infty}\mathbb{D}^{\mathcal{Y},k,2}X
\end{equation}
weakly in $L^1_a(\mathbb{P}\times Leb)$. The existence of the limits (\ref{SMOOTHDER}) require strong regularity of $X\in \mathcal{W}(\mathbb{F})$ (see Remark 4.5 in \cite{LOS}) and it is reminiscent from smooth $C^{1,2}(\mathbb{R}_+\times\mathbb{R})$-transformations of semimartingales in the non-path-dependent case. In the path-dependent case, such regularity rarely exists and this is one of the motivations to the development of different notions of viscosity methods in path-dependent PDEs. See \cite{peng1,touzi1,touzi2,touzi3,cosso1} for more details.

\subsection{Universal characterization of non-martingale components for weakly differentiable processes}
In this section, we present a universal variational characterization of the drift term $V_X$ of a given $X\in \mathcal{W}(\mathbb{F})$ with a decomposition

$$X(t) = X(0) + \int_0^t\mathcal{D}X(s)dB(s) + V_X(t); 0\le t\le T.$$
We observe that a priori $V_X$ is just a continuous process. The goal here is to represent $V_X$ by means of limits of integral functionals based on a given stable imbedded discrete structure $\mathcal{Y}  = \big((X^k)_{k\ge 1}, \mathscr{D}\big)$ w.r.t $X$. Under rather weak regularity conditions, it turns out that $V_X$ will be described by suitable occupation times of the driving discrete-type skeleton combined with a first-order operator. In order to start the analysis, let us consider 

$$
\textbf{t}(A^{k}_s,(j+i)\epsilon_k)=\left\{
\begin{array}{rl}
A^k(u); & \hbox{if}~0\le u<s \\
(j+i)\epsilon_k;& \hbox{if}~u=s
\end{array}
\right.
$$
for $0< s\le T$, $j\in \mathbb{Z}$ and $i=-1,0,1$. With these objects at hand, if $X\in \mathbf{B}^2(\mathbb{F})$ admits an imbedded discrete structure $\mathcal{Y} = \big((X^k))_{k\ge 1}, \mathscr{D}\big)$ with a given functional representation~(\ref{funcrep}), then we define

\begin{equation}\label{fkdd}
\nabla^{\mathcal{Y},k,v}X(s,x):= \sum_{j\in \mathbb{Z}} \frac{F^k_s(\textbf{t}(A^{k}_s,j\epsilon_k)) - F^k_s(\textbf{t}(A^{k}_s,(j-1)\epsilon_k))}  {\epsilon_k} 1\!\!1_{S^k_j}(x);~(s,x)\in [0,T]\times \mathbb{R}
\end{equation}
where $S^k_j:= \big((j-1)\epsilon_k, j\epsilon_k\big]$ for $j\in \mathbb{Z}$ and $k\ge 1$. For instance, if we take the associated functional imbedded discrete structure $\mathcal{F} = \big((\textbf{F}^k)_{k\ge 1},\mathscr{D}\big)$ given by (\ref{funcSTRUCTURE}), then

$$
\nabla^{\mathcal{F},k,v}X(s,x)= \sum_{j\in \mathbb{Z}} \frac{F_s(\textbf{t}(A^{k}_s,j\epsilon_k)) - F_s(\textbf{t}(A^{k}_s,(j-1)\epsilon_k))}  {\epsilon_k} 1\!\!1_{S^k_j}(x); (s,x)\in [0,T]\times \mathbb{R}.
$$
In order to switch second-order derivative $\mathcal{D}^{\mathcal{Y},k,2}$ into~(\ref{fkdd}), the following natural notion of occupation times plays a key role

\begin{equation}\label{Lk}
\mathbb{L}^{k,x}(t) :=\sum_{j\in \mathbb{Z}}\ell^{k,j\epsilon_k}(t) 1\!\!1_{S^k_j}(x); (t,x)\in [0,T]\times \mathbb{R},
\end{equation}
where
\begin{equation}\label{ocid1}
\ell^{k,x}(t):= \frac{1}{\epsilon_k}\int_0^t 1\!\!1_{\{|A^ k(s-) - x| < \epsilon_k \}}d\langle A^k,A^k\rangle(s);~k\ge 1,~x\in \mathbb{R},~ 0\le t \le T.
\end{equation}
Instead of Lebesgue measure, the occupation time~(\ref{ocid1}) is computed by a different clock $\langle A^k,A^k\rangle$ which has absolutely continuous paths by Lemma 2.1 in \cite{LOS}. In order to express our preliminary functional It\^o formula, we need a natural notion of integration w.r.t the occupation times
$\ell^{k}$. If $H=\{H(t,x); (t,x)\in [0,T]\times\mathbb{R}\}$ is a simple random field of the form

$$H(t,x) = \sum_{j\in \mathbb{Z}}\alpha_j(t)1\!\!1_{S^k_j}(x)$$
for some measurable process $\alpha_j:\Omega\times [0,T]\rightarrow \mathbb{R}$ then integration w.r.t $\Big\{\mathbb{L}^{k,x}(t); x\in\mathbb{R}, t\in [0,T]\Big\}$ is naturally defined by

\begin{equation}\label{intwrtl}
\int_0^t\int_{\mathbb{R}} H(s,x)d_{(s,x)}\mathbb{L}^{k,x}(s):= \sum_{j\in \mathbb{Z}}\int_0^t\alpha_j(s)\Big[d_s\ell^{k,j\epsilon_k}(s) -
d_s\ell^{k,(j-1)\epsilon_k}(s)    \Big]
\end{equation}
for $~0\le t\le T$, whenever the right-hand side of~(\ref{intwrtl}) is finite a.s.

\begin{lemma}\label{parts}
Let $X$ be a Wiener functional and let $\mathcal{Y}=\big((X^k)_{k\ge1},\mathscr{D}\big)$ be an imbedded discrete structure w.r.t $X$ with a functional representation~(\ref{funcrep}) and satisfying~(\ref{i1}). Then

$$
\frac{1}{2}\int_0^t \mathcal{D}^ {\mathcal{Y},k,2}X(s)d\langle A^k, A^k\rangle(s) =
-\frac{1}{2} \int_0^t\int_{-\infty}^{+\infty}\nabla^{\mathcal{Y},k,v}X(s,x))d_{(s,x)}\mathbb{L}^{k,x}(s)
$$
for $0\le t\le T$. Hence, the following decomposition holds

\begin{eqnarray}\label{fulldec}
X^k(t) &=& X^k(0) + \oint_0^t\mathbb{D}^{\mathcal{Y},k}X(s)dA^ k(s) + \int_0^t \mathbb{D}^{\mathcal{Y},k,h}X(s)ds\\
\nonumber& &\\
\nonumber&-& \frac{1}{2}\int_0^t\int_{-\infty}^{+\infty}\nabla^ {\mathcal{Y},k,v}X(s,x)d_{(s,x)}\mathbb{L}^{k,x}(s);~0\le t\le T,
\end{eqnarray}
for each $k\ge 1$.
\end{lemma}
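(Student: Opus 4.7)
The plan is to reduce the claimed identity to a pathwise Abel summation by parts. First, I would introduce the slice notation $u_j(s) := F^k_s(\textbf{t}(A^k_s, j\epsilon_k))$ for $j \in \mathbb{Z}$ and note that, since $A^k$ is $\epsilon_k\mathbb{Z}$-valued, for each $s$ there is a unique integer $J(s)$ with $A^k(s-) = J(s)\epsilon_k$. Under this notation, (\ref{ddk2}) becomes the centered second difference
$$\mathcal{D}^{\mathcal{Y},k,2}X(s) = \frac{u_{J(s)+1}(s) - 2u_{J(s)}(s) + u_{J(s)-1}(s)}{\epsilon_k^2},$$
while (\ref{fkdd}) is the forward difference $\nabla^{\mathcal{Y},k,v}X(s,x) = \epsilon_k^{-1}(u_j(s) - u_{j-1}(s))$ on $S^k_j$.

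Second, I would compute the pathwise density of $\ell^{k,j\epsilon_k}$ against $\langle A^k,A^k\rangle$: because $A^k$ takes values in $\epsilon_k\mathbb{Z}$, the open condition $|A^k(s-) - j\epsilon_k| < \epsilon_k$ is equivalent to $A^k(s-) = j\epsilon_k$, so by (\ref{ocid1}),
$$d_s \ell^{k,j\epsilon_k}(s) = \frac{1}{\epsilon_k}\, 1\!\!1_{\{A^k(s-) = j\epsilon_k\}}\, d\langle A^k,A^k\rangle(s).$$
Plugging this together with the forward-difference formula into (\ref{intwrtl}) and then shifting the index $j \mapsto j+1$ in one of the two indicator sums gives the pathwise identity
$$\sum_{j\in\mathbb{Z}}(u_j(s) - u_{j-1}(s))\bigl[1\!\!1_{\{A^k(s-) = j\epsilon_k\}} - 1\!\!1_{\{A^k(s-) = (j-1)\epsilon_k\}}\bigr] = \sum_{j\in\mathbb{Z}}(2u_j(s) - u_{j-1}(s) - u_{j+1}(s))\, 1\!\!1_{\{A^k(s-) = j\epsilon_k\}}.$$
At each $s$, only the term $j=J(s)$ survives on the right-hand side, and it equals $-\epsilon_k^2\mathcal{D}^{\mathcal{Y},k,2}X(s)$. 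After dividing by $\epsilon_k^2$ and integrating against $d\langle A^k,A^k\rangle(s)$, multiplication by $-1/2$ yields the first claim. The decomposition (\ref{fulldec}) is then immediate from Corollary \ref{reprewithsplit}, using (\ref{ykh}) to rewrite $\int_0^t \frac{1}{2}\mathbb{D}^{\mathcal{Y},k,2}X(s)ds = \frac{1}{2}\int_0^t \mathcal{D}^{\mathcal{Y},k,2}X(s)d\langle A^k,A^k\rangle(s)$ and substituting the identity just proven.

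The main obstacle is justifying the interchange of summation and integration, since (\ref{intwrtl}) is a formal sum over $\mathbb{Z}$. This is controlled by observing that $N^k(T) < \infty$ a.s., so pathwise $A^k$ visits only finitely many lattice levels on $[0,T]$; consequently only finitely many $j$ produce nonzero terms on each trajectory in both the original sum and in the Abel-summed expression. The integrability hypothesis (\ref{i1}) then upgrades this pathwise finiteness to the $L^1$ control needed to invoke Fubini and carry out the index shift rigorously.
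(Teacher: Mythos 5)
Your proposal is correct and follows essentially the same route as the paper's proof: a pathwise Abel summation by parts that converts the centered second difference $\mathcal{D}^{\mathcal{Y},k,2}X$ integrated against $d\langle A^k,A^k\rangle$ into the forward difference $\nabla^{\mathcal{Y},k,v}X$ integrated against the occupation-time increments, with the boundary terms vanishing because $A^k$ visits only finitely many lattice levels on $[0,T]$ (the paper implements this via the localization $J^k_m$ and lets $m\to\infty$, which is equivalent to your $N^k(T)<\infty$ argument). The second part via Corollary \ref{reprewithsplit} and (\ref{ykh}) is exactly as in the paper.
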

\begin{proof}
In order to simplify the computations, we assume without any loss of generality that $\epsilon_k = 2^{-k}$. Let us introduce the following objects
\begin{equation}\label{deltaF}
\Delta F_{s,j,k}:=F^k_s(\textbf{t}(A^k_s,(j+1)2^ {-k})) - F^k_s(\textbf{t}(A^k_s,j2^ {-k}))
\end{equation}
$$b^{k,j}(s):=\frac{1}{2^{-k}}1\!\!1_{\{|A^ k(s-) - j2^{-k}| < 2^{-k} \}}
$$
and we set $\Delta b^{k,j+1}(s):= b^{k,j+1}(s)-b^{k,j}(s)$ for $k\ge 1,~j\in \mathbb{Z},~ 0\le s \le T$. For a given $j \in \mathbb{Z}$ and $s\in [0,T]$, we observe $\textbf{t}(A^{k}_s,(j+i)2^{-k}) = A^{k,i2^ {-k}}_{s-}$ on $\{ A^ k(s-) = j2^ {-k}\}$ for each $i=-1,0,1$. More importantly,

$$
\mathcal{D}^{\mathcal{Y},k,2} X(s) = \sum_{j\in \mathbb{Z}} \Bigg( \frac{\Delta F_{s,j,k} - \Delta F_{s,j-1,k}}{2^ {-k}}\Bigg) \frac{1}{2^ {-k}}1\!\!1_{\{A^ k(s-) = j2^ {-k} \} }~a.s;~0\le s\le T.
$$
For a given positive integer $m\ge 1$, let $J^k_m:=\inf\{0\le s\le T; |A^k(s)| >2^m\}$. Then

$$
\mathcal{D}^{\mathcal{Y},k,2} X(s) = \sum_{j=-2^{k+m}}^{2^{k+m}} \Bigg( \frac{\Delta F_{s,j,k} - \Delta F_{s,j-1,k}}{2^ {-k}}\Bigg) \frac{1}{2^ {-k}}1\!\!1_{\{A^ k(s-) = j2^ {-k} \} }
$$
a.s on $\{0\le s< J^k_m\}$. On a given set $\{0\le s < J^k_m\}$, we perform a pathwise summation by
parts on the set $[-2^m, 2^m]\cap \mathbb{Z}2^{-k}$ as follows

\begin{eqnarray*}
\frac{1}{2}\sum_{j=-2^{k+m}-1}^{2^{k+m}-1}\Big(\frac{\Delta F_{s,j+1,k}-\Delta F_{s,j,k}}{2^{-k}}\Big)
b^{k,j+1}(s)&=&\frac{1}{2}\Bigg[\frac{\Delta F_{s,2^{m+k},k}}{2^{-k}}b^{k,2^{m+k}}(s)\\
&-&\frac{\Delta F_{s,-2^{m+k}-1,k}}{2^{-k}}b^{k,-2^{m+k}-1}(s)\Bigg]\\
&-& \frac{1}{2} \sum_{j=-2^{k+m}-1}^{2^{k+m}-1} \frac{\Delta F_{s,j,k}}{2^{-k}}\Delta b^{k,j+1}(s)
\end{eqnarray*}
a.s where $b^{k,2^{k+m}}(s,\omega) =b^{k,2^{k+m-1}}(s,\omega)=0 $ if $0\le s < J^k_m(\omega)$. Then

\begin{equation}\label{intp}
\frac{1}{2}\sum_{j=-2^{k+m}-1}^{2^{k+m}-1}\Big(\frac{\Delta F_{s,j+1,k}-\Delta F_{s,j,k}}{2^{-k}}\Big)
b^{k,j+1}(s)=- \frac{1}{2} \sum_{j=-2^{k+m}-1}^{2^{k+m}-1} \frac{\Delta F_{s,j,k}}{2^{-k}}\Delta b^{k,j+1}(s)
\end{equation}
a.s for $0\le s < J^k_m$. For a given $0\le t \le T $,~(\ref{intp}) yields
\begin{equation}\label{mmm}
\frac{1}{2}\int_{[0, J^k_m\wedge t)}\mathcal{D}^ {\mathcal{Y},k,2}X(s) d\langle A^k,A^k\rangle(s)
=-\frac{1}{2}\int_{[0,J^k_m\wedge t)}\int_{-\infty}^{+\infty}\nabla^{\mathcal{Y},k,v}X(s,x)d_{(s,x)}\mathbb{L}^{k,x}(s)
\end{equation}
a.s for each $m\ge 1$. By taking $m\rightarrow \infty$ in~(\ref{mmm}), we conclude the proof.
\end{proof}
The following corollary reveals a broad connection between occupation times and weak differentiability of Wiener functionals.

\begin{proposition}\label{ulocaltime}
Let $X\in \mathcal{W}(\mathbb{F})$ be a weakly differentiable Wiener functional with a decomposition

\begin{equation}\label{decomposicao}
X(t) = X(0) + \int_0^t \mathcal{D}X(s)dB(s) + V_X(t)~0;\le t\le T.
\end{equation}
Then,

\begin{equation}\label{v}
V_X(\cdot)=\lim_{k\rightarrow \infty}\Bigg[\int_0^\cdot \mathbb{D}^{\mathcal{Y},k,h}X(s)ds - \frac{1}{2}\int_0^\cdot\int_{-\infty}^{+\infty}\nabla^ {\mathcal{Y},k,v}X(s,x)d_{(s,x)}\mathbb{L}^{k,x}(s)\Bigg]
\end{equation}
weakly in $\mathbf{B}^2(\mathbb{F})$ for every stable imbedded discrete structure $\mathcal{Y} = \big((X^k)_{k\ge1},\mathscr{D}\big)$ satisfying (\ref{i1}). In particular, for any $\mathbb{F}$-Dirichlet process $X$ in the sense of Bertoin \cite{bertoin} (e.g square-integrable $\mathbb{F}$-semimartingales) with canonical decomposition (\ref{decomposicao}), the zero energy component $V_X$ equals to (\ref{v}).
\end{proposition}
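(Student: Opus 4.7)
The plan is to assemble three ingredients already available in the excerpt and chain them together: Proposition~4.2 of \cite{LOS} supplies the weak $\mathbf{B}^2(\mathbb{F})$-representation of $V_X$ as the limit of $\int_0^\cdot \mathbb{U}^{\mathcal{Y},k}X(s)\,ds$; Lemma~\ref{splittinglemma} splits the weak infinitesimal generator into a horizontal and a second-order component; and Lemma~\ref{parts} rewrites the second-order component as an integral against the two-parameter occupation time $\mathbb{L}^{k,x}$. Substituting these identities into the quoted weak $\mathbf{B}^2$-limit yields (\ref{v}) directly. The Dirichlet statement is then obtained by exhibiting a stable finite-energy structure for such $X$ via Theorem~4.2 of \cite{LOS}.

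Concretely, I would first fix a stable imbedded discrete structure $\mathcal{Y}=\big((X^k)_{k\ge 1},\mathscr{D}\big)$ for $X$ satisfying~(\ref{i1}) and invoke Proposition~4.2 of \cite{LOS} to get
\[
V_X(\cdot)=\lim_{k\to\infty}\int_0^\cdot \mathbb{U}^{\mathcal{Y},k}X(s)\,ds
\]
weakly in $\mathbf{B}^2(\mathbb{F})$. Next, combining Lemma~\ref{splittinglemma} with the notation~(\ref{ykh}) gives the pathwise identity $\mathbb{U}^{\mathcal{Y},k}X=\mathbb{D}^{\mathcal{Y},k,h}X+\tfrac12\mathbb{D}^{\mathcal{Y},k,2}X$, and then Lemma~\ref{parts}, whose applicability is precisely what~(\ref{i1}) guarantees, produces the pathwise identity
\[
\tfrac{1}{2}\int_0^t \mathbb{D}^{\mathcal{Y},k,2}X(s)\,ds=-\tfrac{1}{2}\int_0^t\!\!\int_{-\infty}^{+\infty}\nabla^{\mathcal{Y},k,v}X(s,x)\,d_{(s,x)}\mathbb{L}^{k,x}(s)
\]
for each $k\ge 1$ and each $t\in[0,T]$. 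Because these rewrites hold as exact identities at every finite $k$ and weak $\mathbf{B}^2(\mathbb{F})$-convergence is preserved under equality of random processes, substituting them into the Proposition~4.2 limit yields (\ref{v}).

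For the Dirichlet part, write the canonical decomposition $X=X(0)+M+V_X$ where $M\in\mathbf{H}^2(\mathbb{F})$ and $V_X\in\mathbf{B}^2(\mathbb{F})$ is the continuous zero-energy process. Theorem~4.2 of \cite{LOS} (cited explicitly in the introduction for Dirichlet processes) provides a stable finite-energy imbedded structure for $X$ and identifies $\mathcal{D}X$ with the martingale integrand of $M$. Hence $X\in\mathcal{W}(\mathbb{F})$, so the previous step applies verbatim and the zero-energy component is characterized by~(\ref{v}); as square-integrable $\mathbb{F}$-semimartingales are Dirichlet in the sense of Bertoin, the special case follows as a corollary.

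The only nontrivial step is checking that Lemma~\ref{parts} actually applies to the stable structure at hand, which reduces to the integrability hypothesis~(\ref{i1}); everything else is a chain of substitutions. So the main obstacle is not the argument for a weakly differentiable $X$ where $\mathcal{Y}$ is given, but rather verifying in the Dirichlet corollary that the structure produced by Theorem~4.2 of \cite{LOS} can be chosen so that~(\ref{i1}) holds --- this I would handle by either stopping $X$ along a suitable sequence (reducing to a locally weakly differentiable setting) or by showing directly from the construction of the GAS in \cite{LOS} that the $L^1$ bound~(\ref{i1}) is automatic for the stopped truncated approximations.
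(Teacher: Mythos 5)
Your proposal is correct and takes essentially the same route as the paper: the published proof also rests on Lemma \ref{parts} (equivalently the decomposition (\ref{fulldec}), which already packages Lemma \ref{splittinglemma}) together with Proposition 4.2 of \cite{LOS}, only phrased as subtracting the weakly convergent martingale term $\oint_0^\cdot\mathbb{D}^{\mathcal{Y},k}X\,dA^k$ from $X^k$ instead of substituting the identities into the limit of $\int_0^\cdot\mathbb{U}^{\mathcal{Y},k}X(s)\,ds$. Your caution about verifying (\ref{i1}) in the Dirichlet case is reasonable but not needed, since the assertion is already stated only for structures satisfying (\ref{i1}) and the paper simply invokes Theorem 4.2 of \cite{LOS} for weak differentiability.
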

\begin{proof}
This is an immediate consequence of representation~(\ref{fulldec}) and the fact that both

$$\oint_0^{\cdot}\mathbb{D}^{\mathcal{Y},k}X(s)dA^ k(s) \rightarrow \int_0^{\cdot} \mathcal{D}X(s)dB(s)~\text{weakly in}~\mathbf{B}^2(\mathbb{F})$$
and
$$\mathbb{D}^{\mathcal{Y},k}X\rightarrow \mathcal{D}X~\text{weakly in}~L^2_a(\mathbb{P}\times Leb)$$
as $k\rightarrow+\infty$, for every stable imbedded discrete structure $\big((X^k)_{k\ge1},\mathscr{D}\big)$ w.r.t $X$ (see Remark \ref{indeps} and Prop. 4.2 in \cite{LOS}). The last assertion follows from Theorem 4.2 in \cite{LOS}.
\end{proof}

\section{Weak differentiability and finite $(p,q)$-variation regularity}\label{pqregularitysection}
In this section, we provide a class of examples of weakly differentiable Wiener functionals $X$ which exhibit rough dependence w.r.t Brownian paths. More precisely, let us assume the existence of a non-anticipative functional $F$ realizing $X(t) = F_t(\textbf{t}(B_t,B(t))); 0\le t\le T$, where the following regularity conditions are in force:

\

\noindent \textbf{Assumption A1:} For each $c\in D([0,T];\mathbb{R})$ and $t\in (0,T]$, $x\mapsto F_t(\textbf{t}(c_t,x))$ is $C^1(\mathbb{R})$,

$$(t,c,x)\mapsto F_t(\textbf{t}(c_t,x))$$
is jointly continuous and it has linear growth, i.e., for every compact set $J_1\times J_2\subset \mathbb{R}^2$, there exists a constant $M_1$ such that

$$|F_t(\textbf{t}(c_t,a))|\le M_1\big(1+ \sup_{0\le s\le t}|c(s\wedge t)|\big)$$
for every $c\in D([0,T];J_1), t\in [0,T]$ and $a\in J_2$.

\

\noindent \textbf{Assumption A2:} For every compact set $J\subset\mathbb{R}$, there exists a positive constant $M_2$ such that

$$|\nabla_x F_t(\textbf{t}(c_t,a)) - \nabla_x F_t(\textbf{t}(c_t,b))|\le M_2 |a-b|^{\gamma_1}$$
for every $t\in [0,T]$, $(a,b)\in \mathbb{R}^2$ and for every $c\in D([0,T];J)$, where $0< \gamma_1 \le 1$.

\

\noindent \textbf{Assumption A3:} For every compact set $J\subset\mathbb{R}$, there exists a positive constant $M_3$ such that

$$|\nabla_x F_t(\textbf{t}(c_t,a)) - \nabla_x F_t(\textbf{t}(d_t,a))|\le M_3 \sup_{0\le s\le T}|c(s)-d(s)|^{\gamma_2}$$
for every $t\in [0,T]$ and $a\in J$, where $0< \gamma_2 \le 1$.

\

\noindent \textbf{Assumption A4:} The map $(t,c,x)\mapsto \nabla_x F_t(\textbf{t}(c_t,x))$ is jointly continuous and there exists a constant $M_4$ such that
$$|\nabla_x F_t(\textbf{t}(c_t,a))|\le M_4\big(1+ \sup_{0\le s\le t}|c(s\wedge t)|\big)$$
for every $c\in D([0,T];\mathbb{R}), t\in [0,T]$ and $a\in \mathbb{R}$.

\

\noindent \textbf{Assumption A5:} For every compact set $J\subset\mathbb{R}$, there exists a positive constant $M_5$ such that

$$| F_{t+h}(c_{t,h}) - F_t(c_t)|\le M_5  h^{\gamma_3}$$
for every $t\in [0,T], h>0$ and $c\in D([0,T];J)$, where $0< \gamma_3 \le 1$.

\

In Assumption A5, $c_{t,h}\in D([0,t+h],\mathbb{R})$ is the horizontal extension of $c_t$ described as follows:
$$c_{t,h}(u):= c(u); 0\le u\le t $$
and $c_{t,h}(u):=c(t); t\le u \le t+h.$

\begin{remark}
Assumptions A1-A2-A3-A4-A5 do not imply horizontal and twice-vertical differentiability in the sense of \cite{dupire,cont}. See Example \ref{exampleYOUNG} for a concrete example.
\end{remark}

\begin{theorem}\label{FIRSTTH}
If $F$ satisfies Assumptions A1, A2, A3, A4 and A5, then $X=F(B)$ is a locally weakly differentiable Wiener functional where

$$\mathcal{D}X(t) = \nabla_x F(\textbf{t}(B_t,B(t)); 0\le t\le T$$
and the drift $V_X$ is given by (\ref{v}) for every stable imbedded discrete structure $\mathcal{Y} = \big((X^k)_{k\ge1},\mathscr{D}\big)$ satisfying (\ref{i1}).
\end{theorem}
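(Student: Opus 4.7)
My plan is to take the natural functional imbedded discrete structure $\mathcal{F}=\big((\mathbf{F}^k)_{k\ge 1},\mathscr{D}\big)$ of~(\ref{funcSTRUCTURE}), show that it is a stable finite-energy structure for $X=F(B)$, identify $\mathcal{D}^{\mathcal{F}}X$ with the pointwise gradient $\nabla_x F(\mathbf{t}(B,B))$, and then invoke Proposition~\ref{ulocaltime} to obtain the drift representation. To reconcile the compact-set formulation of A1--A5 with a global statement, I would first localize at the exit times $\tau_N:=\inf\{t:|B(t)|>N\}$ (with $N$ large enough that the $\pm\epsilon_k$-perturbations appearing in A2 and A3 stay inside the compact), carry out the proof on $[0,\tau_N]$, and then conclude by the definition of local weak differentiability. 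The weak convergence $\mathbf{F}^k\to X$ in $\mathbf{B}^2(\mathbb{F})$ follows from the joint continuity in A1, the piecewise-constant structure of $\mathbf{F}^k$, and the uniform bound $\sup_t|A^k(t)-B(t)|\le\epsilon_k$.

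The heart of the proof is a vertical/horizontal split of the discrete increment. Writing $A^k_{T^k_{\ell-1},\ell}$ for the horizontal extension of $A^k_{T^k_{\ell-1}}$ up to time $T^k_\ell$, I would decompose
$$\Delta\mathbf{F}^k(T^k_\ell)=\big[F_{T^k_\ell}(A^k_{T^k_\ell})-F_{T^k_\ell}(A^k_{T^k_{\ell-1},\ell})\big]+\big[F_{T^k_\ell}(A^k_{T^k_{\ell-1},\ell})-F_{T^k_{\ell-1}}(A^k_{T^k_{\ell-1}})\big].$$
By A1, $F$ is $C^1$ in the terminal coordinate, so the first (vertical) bracket equals $\Delta A^k(T^k_\ell)\int_0^1\nabla_xF_{T^k_\ell}(\mathbf{t}(A^k_{T^k_{\ell-1},\ell},A^k(T^k_{\ell-1})+\theta\Delta A^k(T^k_\ell)))\,d\theta$, and by A5 the second (horizontal) bracket is bounded by $M_5(\Delta T^k_\ell)^{\gamma_3}$. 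Dividing by $\Delta A^k(T^k_\ell)=\epsilon_k\sigma^k_\ell$ yields, on $[T^k_\ell,T^k_{\ell+1})$, the decomposition $\mathbb{D}^{\mathcal{F},k}X=G^k+R^k$ with $|R^k|\le M_5(\Delta T^k_\ell)^{\gamma_3}/\epsilon_k$. Using A2, A3, A4 together with $A^k\to B$ uniformly and $A^k(T^k_{\ell-1})+\theta\Delta A^k(T^k_\ell)\to B(t)$ a.s., the main piece $G^k$ converges a.s.\ to $\nabla_xF_t(\mathbf{t}(B_t,B(t)))$; the linear-growth bound of A4 combined with localization upgrades this to $L^2_a(\mathbb{P}\times\mathrm{Leb})$-convergence via dominated convergence.

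The main obstacle is the remainder $R^k$. Because the Bernoulli sign $\sigma^k_\ell$ is independent of $\mathcal{F}^k_{T^k_\ell-}$ while the horizontal bracket is $\mathcal{F}^k_{T^k_\ell-}$-measurable, the sequence $\{R_\ell\}$ is a martingale-difference array at the skeleton stopping times. Pairing with an arbitrary $\phi\in L^2_a(\mathbb{P}\times\mathrm{Leb})$, I would use this orthogonality together with the scaling bounds $\mathbb{E}[(\Delta T^k_\ell)^{2\gamma_3}]\le C\epsilon_k^{4\gamma_3}$ and Wald's identity $\mathbb{E}N^k(T)\le CT\epsilon_k^{-2}$ to push $\mathbb{E}\int_0^T\phi R^k\,ds\to 0$. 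A naive moment bound only gives $\|R^k\|_{L^2}^2\lesssim\epsilon_k^{4\gamma_3-2}$, which is not even bounded for $\gamma_3<1/2$, so the Bernoulli-sign cancellation is precisely what carries the weak convergence through.

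It remains to verify finite energy, condition~(\ref{i1}), and stability. Finite energy and~(\ref{i1}) follow from the same vertical/horizontal split using A2, A4, A5 and the moment estimates above. Stability of $\mathcal{F}$ amounts to $M^{\mathcal{F},k}=\oint_0^\cdot\mathbb{D}^{\mathcal{F},k}X\,dA^k\to\int_0^\cdot\nabla_xF(\mathbf{t}(B_s,B(s)))\,dB(s)$ weakly in $\mathbf{B}^2(\mathbb{F})$, which follows from the weak $L^2$-convergence of the integrand just established, combined with BDG applied to the optional-integral compensator appearing in~(\ref{intf}) to control the predictable correction. With $\mathcal{D}X$ identified and stability in hand, Proposition~\ref{ulocaltime} applies directly and gives the representation~(\ref{v}) of $V_X$.
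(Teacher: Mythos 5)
Your overall architecture coincides with the paper's: localize at the exit times of $B$ from $[-2^m,2^m]$ (which are $\mathbb{F}^k$-stopping times), work with the functional structure (\ref{funcSTRUCTURE}), split the discrete increment into a vertical term $I^{k,1}$ (handled by the mean value theorem for the $C^1$ terminal coordinate together with A2--A4 and the uniform bound $\sup_t|A^k(t)-B(t)|\le\epsilon_k$) and a horizontal remainder $I^{k,2}$ (controlled through A5 and the moments of the waiting times), then verify finite energy via the boundary term $\epsilon_k^{-2}\mathbb{E}\big(|\Delta \mathbf{F}^k(T^k_{N^k(T)})|^2|T^k_{N^k(T)+1}-T|\big)$ and conclude through Proposition \ref{ulocaltime}. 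The single place where you depart from the paper is the treatment of the horizontal remainder, and that is where your argument has a genuine gap.

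The paper disposes of $I^{k,2}$ by the direct estimate $\mathbb{E}|I^{k,2}(t)|^2\le C\,2^{2k}\,\mathbb{E}|T^k_{N^k(t)}-T^k_{N^k(t)-1}|^{2\gamma_3}= C\,2^{2k(1-2\gamma_3)}$, i.e.\ \emph{strong} convergence in $L^2_a(\mathbb{P}\times Leb)$; no sign cancellation is invoked. You rightly observe that this bound degenerates for $\gamma_3\le 1/2$, but the orthogonality mechanism you propose in its place does not work. Pairing $R^k$ with $\phi\in L^2_a(\mathbb{P}\times Leb)$ produces $\sum_\ell\epsilon_k^{-1}\mathbb{E}\big[\sigma^k_\ell\,H_\ell\int_{T^k_\ell\wedge T}^{T^k_{\ell+1}\wedge T}\phi(s)\,ds\big]$, and although $\sigma^k_\ell$ is indeed independent of the $\mathcal{F}^k_{T^k_\ell-}$-measurable bracket $H_\ell$, the test function is integrated over $[T^k_\ell,T^k_{\ell+1})$, where it is $\mathcal{F}_s$-measurable with $s\ge T^k_\ell$ and therefore already ``sees'' $\sigma^k_\ell$: taking $\phi(s)=\sigma^k_{N^k(s)}$, which is adapted and bounded, destroys the cancellation entirely and reduces your pairing to the same $\epsilon_k^{2\gamma_3-1}$-type bound you were trying to avoid. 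Since for $\gamma_3<1/2$ you also lack a uniform $L^2$ (or even $L^1$) bound on $R^k$, you cannot reduce to a dense class of test functions either. To reproduce the paper's argument you should simply run the strong $L^2$ estimate above, with the understanding that it forces $I^{k,2}\to 0$ exactly in the regime $\gamma_3>1/2$ that the paper's own computation implicitly relies on; your instinct that something extra is needed below that threshold is sound, but the Bernoulli-sign cancellation is not that extra ingredient.
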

\begin{proof}
Throughout this proof, $C$ is a constant which may defer from line to line. Without any loss of generality, we set $\epsilon_k=2^{-k}$ for integers $k\ge 1$.By considering $S_m = \inf\{t >0; |B(t)|\ge 2^m\}$, both $A^k$ and $B$ are bounded on the stochastic interval $[[0,S_m[[$ for $m,k\ge 1$. Moreover, since we shall write $2^m = j2^{-k}$ for $j=2^{m+k}$ ($m$ fixed), we observe that $S_m$ is an $\mathbb{F}^k$-stopping time for each $k\ge 1$. Therefore, by localization, for a given $m\ge 1$, we may assume that both $A^k$ and $B$ lie on the compact set $I_m = [-2^m,2^m]$ for every $k \ge 1$. The fact that $\mathcal{F}$ is an imbedded discrete structure for $F(B)$ is an immediate consequence of Assumption A1. We now claim that

\begin{equation}\label{Df}
\lim_{k\rightarrow+\infty}\mathbb{D}^{\mathcal{F},k}X(\cdot) = \nabla_x F_t(\textbf{t}(B_\cdot,B(\cdot))~\text{strongly in}~L^2_a(\mathbb{P}\times Leb).
\end{equation}
Let us denote $\eta^k_n:=A^k(T^k_{n-1})+ \Delta A^k(T^k_n)$. We notice that we shall write
$A^k_{T^k_n}= \textbf{t}(A^k_{T^k_n-},\eta^k_n )$ and $A^k_{T^k_n-} = \textbf{t}(A^k_{T^k_n-},A^k(T^k_{n-1}))$ so that

\begin{eqnarray*}
\mathbb{D}^{\mathcal{F},k}X(t)&=&\sum_{n=1}^{\infty}  \Bigg( F_{T^k_n}(\textbf{t}(A^k_{T^k_n-}, \eta^k_n)) - F_{T^k_{n}}(A^k_{T^k_{n-}})\Bigg)\frac{1}{\Delta A^k(T^k_n)}
1\!\!1_{\{T^k_{n}\le t < T^k_{n+1}\}}\\
& &\\
&+& \sum_{n=1}^{\infty}  \Big( F_{T^k_n}(A^{k}_{T^k_{n}-}) - F_{T^k_{n-1}}(A^k_{T^k_{n-1}})\Big)\frac{1}{\Delta A^k(T^k_n)}
1\!\!1_{\{T^k_{n}\le t < T^k_{n+1}\}}\\
& &\\
&=:&I^{k,1}(t) + I^{k,2}(t);~0\le t\le T.
\end{eqnarray*}
Because $x\mapsto F_t(\textbf{t}(c_t,x))$ in $C^1(\mathbb{R})$ for each path $c\in D([0,T];\mathbb{R})$, we shall apply the usual mean value theorem to get the existence of a family of random variables $ 0 <\gamma_{k,n}< 1$ a.s such that

\begin{eqnarray*}
I^{k,1}(t)&=& \nabla_xF_{T^k_n}\big(\textbf{t}(A^k_{T^k_n-}, A^k(T^k_{n-1}) + \gamma_{k,n}\Delta A^k(T^k_n)\big)\\
&\text{on}&~\{T^k_n\le t< T^k_{n+1},\Delta A^k(T^k_n)>0\}\\
\end{eqnarray*}

\begin{eqnarray*}
I^{k,1}(t)&=& \nabla_xF_{T^k_n}\big(\textbf{t}(A^k_{T^k_n-}, A^k(T^k_{n-1}) + \Delta A^k(T^k_n) - \gamma_{k,n}\Delta A^k(T^k_n)\big)\\
&\text{on}&~\{T^k_{n}\le t < T^k_{n+1},\Delta A^k(T^k_n)<0\}.
\end{eqnarray*}
We claim that $I^{k,1}(\cdot)\rightarrow \nabla_xF_\cdot(\textbf{t}(B_\cdot,B(\cdot)))$ weakly in $L^2_a(\mathbb{P}\times Leb)$ as $k\rightarrow \infty$. To shorten notation, let us denote

$$
a_{k,n}:=\left\{
\begin{array}{rl}
A^k(T^k_{n-1}) + \gamma_{k,n}\Delta A^k(T^k_n); & \hbox{if} \ \Delta A^{k,j}(T^{k,j}_n) > 0 \\
A^k(T^k_{n-1}) + \Delta A^k(T^k_n) - \gamma_{k,n}\Delta A^k(T^k_n);& \hbox{if} \ \Delta A^{k,j}(T^{k,j}_n)< 0. \\
\end{array}
\right.
$$
for $k,n\ge 1$.


\begin{eqnarray*}
\nonumber\Big|I^{k,1}(t)  -\nabla_xF_t(\textbf{t}(B_t,B(t)))\Big| &\le& |I^{k,1}(t) - \nabla_xF_{T^k_n}(\textbf{t}(B_{T^k_n},B(T^k_n)))|\\
\nonumber& &\\
\nonumber&+& |\nabla_xF_{T^k_n}(\textbf{t}(B_{T^k_n},B(T^k_n))) - \nabla_xF_{t}(\textbf{t}(B_{t},B(t)))| \\
&=:& J^{k,1}(t) + J^{k,2}(t)
\end{eqnarray*}
on $\{T^k_n \le t <T^k_{n+1}\}$. Triangle inequality, (A2-A3), and the fact that $a_{k,n}$ and $B$ lies on the compact set $I_m$ a.s over $[0,T]$ imply the existence of constants $M_2$ and $M_3$ which do not depend on $t\in [0,T]$ and $k\ge 1$ such that

\begin{eqnarray*}
J^{k,1}(t) &\le& |\nabla_x F_{T^k_n}\big(\textbf{t}(A^k_{T^k_n-},a_{k,n})\big) -\nabla_x F_{T^k_n}\big(\textbf{t}(B_{T^k_n},a_{k,n})\big) | \\
&+&|\nabla_x F_{T^k_n}\big(\textbf{t}(B_{T^k_n},a_{k,n})\big) - \nabla_x F_{T^k_n}\big(\textbf{t}(B_{T^k_n},B(T^k_n))\big)\\
& \le& M_3 \sup_{0\le  u\le t}|A_{T^k_n-}(u) - B(u)|^{\gamma_2} + M_2|a_{k,n} - B(T^k_n)|^{\gamma_1}
\end{eqnarray*}
on $\{T^k_n \le t <T^k_{n+1}\}$. We have $\sup_{0\le  u\le t}|A_{T^k_n-}(u) - B(u)|\le 22^{-k}$ on $\{T^k_n \le t< T^k_{n+1}\}$, $|\Delta A^k(T^k_n)|\le 2^{-k}$ a.s and $0 < \gamma_{k,n} < 1$ a.s for every $k,n\ge1$. We then conclude that $\lim_{k\rightarrow \infty}J^{k,1} = 0$ strongly in $L^2_a(\mathbb{P}\times Leb)$. Assumption A4 yields $\lim_{k\rightarrow+\infty}J^{k,2}(t)=0$ a.s for each $t\in [0,T]$ and the linear growth in A4 yields $\lim_{k\rightarrow+\infty}J^{k,2} = 0$ strongly in $L^2_a(\mathbb{P}\times Leb)$. By Assumption A5, we have

$$
|I^{k,2}(t)|^2\le M_5 2^{2k}|T^k_{N^(t)} - T^k_{N^k(t)-1}|^{2\gamma_3}~a.s
$$
for every $t\in [0,T]$ and $k\ge 1$, where we recall $N^k(t)$ is given by (\ref{numberhitting}). We now notice that Lemma 5.2 in \cite{LEAO_OHASHI2017.1} yields

$$\mathbb{E} |T^k_{N^k(t)} - T^k_{N^k(t)-1}|^q = C2^{-2kq}$$
for each $k\ge 1$ for a constant $C$ which does not depend on $k$ and $t\in [0,T]$. Hence,

$$\mathbb{E}|I^{k,2}(t)|^2\le C 2^{2k}2^{-2k 2\gamma_3}$$
for every $t\in [0,T]$ and $k\ge 1$. Therefore, $\lim_{k\rightarrow+\infty}\mathbb{E}\int_0^T |I^{k,2}(t)|^2dt = 0$. This shows (\ref{Df}).

Now, let us check that $\mathcal{F}$ has finite energy. We observe

\begin{eqnarray}
\nonumber\mathbb{E}\int_0^T|\mathbb{D}^{\mathcal{F},k}X(s) |^2ds &=& \mathbb{E}[M^{\mathcal{F},k},M^{\mathcal{F},k}](T)\\
\nonumber &-&  \mathbb{E}\sum_{n=1}^\infty
|\mathbb{D}^{\mathcal{F},k}X (T^{k}_n)|^2 ( T^{k}_{n+1} - T)1\!\!1_{\{T^{k}_n\le T < T^{k}_{n+1}\} }\\
\label{enerAlast}& &\\
\nonumber&=&\mathbb{E}[M^{\mathcal{F},k},M^{\mathcal{F},k}](T)\\
\nonumber &-& 2^{2k}\mathbb{E}\Big(|\Delta \mathbf{F}^k(T^k_{N^k(T)})|^2|T^k_{N^k(T)+1} - T|\Big)
\end{eqnarray}
for every $k\ge 1$. Since the left-hand side of (\ref{enerAlast}) is bounded due to previous step, we only need to show that the second part of the right-hand side of (\ref{enerAlast}) is bounded. The linear growth assumption in (A1) yields

\begin{eqnarray*}
|\Delta \mathbf{F}^k(T^k_{N^k(T)})|^2 |T^k_{N^k(T)+1} - T|&\le& \big(M_3(1+\sup_{0\le s\le T}|A^k(s)|)\big)^2|\\
&\times&|T^k_{N^k(T)+1} - T|~a.s
\end{eqnarray*}
for every $k\ge 1$ so that there exists a constant $C$ such that

$$\mathbb{E}\Big(|\Delta \mathbf{F}^k(T^k_{N^k(T)})|^2|T^k_{N^k(T)+1} - T|\Big) \le C \big(\mathbb{E} |T^k_{N^k(T)+1} - T^k_{N^k(T)}|^2\big)^{\frac{1}{2}}$$
for every $k\ge 1$. Again, Lemma 5.2 in \cite{LEAO_OHASHI2017.1}, we know that

$$\mathbb{E} |T^k_{N^k(T)+1} - T^k_{N^k(T)}|^2 = C2^{-4k}$$
for a constant $C$ which does not depend on $k$. Therefore,

\begin{equation}\label{enerAlastt}
\sup_{k\ge 1}2^{2k}\Big(\mathbb{E}|\Delta \mathbf{F}^k(T^k_{N^k(T)})|^2|T^k_{N^k(T)+1} - T|\Big)<\infty.
\end{equation}
The estimates (\ref{enerAlast}) and (\ref{enerAlastt}) allow us to conclude the finite energy of $\mathcal{F}$. This concludes that $X\in\mathcal{W}(\mathbb{F})$ (up to a localization) and the characterization of the drift is due to Proposition \ref{ulocaltime}.
\end{proof}

\begin{example}\label{exampleYOUNG}
\end{example}

Let
$$
F_t(c_t):=\int_{-\infty}^{c(t)}\int_0^t \varphi(c(s),y)dsdy
$$
for $c\in D([0,T];\mathbb{R})$, where $\varphi:\mathbb{R}^2\rightarrow \mathbb{R}$ is a two-parameter H\"{o}lder continuous function such that: (i) $\int_{\mathbb{R}}\int_0^T|\varphi(c(s),y)|dsdy< \infty$ for every $c\in D([0,T];\mathbb{R})$, (ii) for every compact set $K$, there exist constants $M_1$ and $M_2$

\begin{equation}\label{Holderpr}
|\varphi(a,x) - \varphi(a,y)|\le M_1|x-y|^{\gamma_1}~\text{and}~|\varphi(c,z) - \varphi(d,z)|\le M_2|c-d|^{\gamma_2},
\end{equation}
for every $(x,y,c,d)\in \mathbb{R}^4$ and for every $(a,z)\in K$, where $\gamma_1\in (0,1], \gamma_2\in (0,1]$ ($\varphi$ has $(\gamma_1,\gamma_2)$-bivariation in the sense of \cite{young1}) and (iii) there exists a constant $M_3$ such that
$$
|\varphi(a,x)|\le M_3(1 + |a|)~\forall (a,x)\in \mathbb{R}^2.
$$
Moreover, (iv) for every compact set $V_1\subset \mathbb{R}$, there exists a compact set $V_2$ such that $\{x; \varphi(a,x)\neq 0\}\subset V_2$ for every $a\in V_1$.

Assumption (ii) implies: For every $L>0$, there exists a constant $C>0$ such that
$$\big|\Delta_j \Delta_i\nabla_x F_{t_i}(\textbf{t}(c_{t_i},x_j))\big|\le C|t_i-t_{i-1}||x_j - x_{j-1}|^{\gamma_1}$$
for every partition $\{t_i\}_{i=0}^N\times \{x_j\}_{j=0}^{N'}$ of $[0,T]\times [-L,L]$ and for every $c\in D([0,T];J)$ restricted to a compact set $J\subset \mathbb{R}$. Therefore,

$$(t,x)\mapsto\nabla_x F(\textbf{t}(c_t,x)) = \int_0^t\varphi(c(s),x)ds$$
has joint finite $\frac{1}{\gamma_1}$-variation in the sense of Friz and Victoir \cite{friz}. Under these conditions, $X\in \mathcal{W}(\mathbb{F})$ (up to localization) and


$$\mathcal{D}X(\cdot) =\int_0^\cdot\varphi(B(r),B(\cdot))dr.$$

In particular, if $\varphi(a,x):=a g(x)$ where $g$ is a $\gamma_1$-H\"{o}lder continuous $(0< \gamma_1 <1)$ and nowhere differentiable function with compact support, then $F$ does not admit second-order vertical derivatives in the sense of \cite{dupire,cont}, but $F(B)$ is a locally weakly differentiable process. For a more explicit characterization of the drift $V_X$ in terms of a two-parameter Young integral, one has to impose stronger regularity $\frac{1}{2} < \gamma_1\le 1$. See Example \ref{lastexample} for further details.

\subsection{Drifts with finite $q$-variation regularity for $q\ge 2$}\label{roughcaseSUB}
Let us now present a class of examples which illustrates that the weak differentiability concept is not restricted to the class of Dirichlet processes (See Proposition \ref{ulocaltime} above and Theorem 4.2 in \cite{LOS}) and indeed it covers a larger class of Wiener functionals whose drift components exhibit rather weak path-regularity. At this point, it is convenient to introduce the $p$-variation topology. let $\mathcal{W}_p(a,b)$ be the space of real-valued functions $f:[a,b]\rightarrow\mathbb{R}$ such that

$$\|f\|^p_{[a,b];p}:= \sup_{\Pi}\sum_{x_i\in \Pi}|f(x_i)-f(x_{i-1})|^p <\infty$$
where $p\ge 1$ and the $\sup$ is taken over all partitions $\Pi$ of a compact set $[a,b]\subset \mathbb{R}$. Let

$$\mathbf{F}^k(t) = \sum_{n=0}^\infty F_{T^k_n}(A^k_{T^k_n})\mathds{1}_{\{T^k_n\le t< T^k_{n+1}\}}$$
be the functional imbedded discrete structure associated with a given $X\in \mathbf{B}^2(\mathbb{F})$. Let us consider the following list of assumptions:

\

\noindent \textbf{Assumption B1:} For each $c\in D([0,T];\mathbb{R})$ and $t\in [0,T]$, $x\mapsto F_t(\textbf{t}(c_t,x))$ is $C^1(\mathbb{R})$,
and $\mathbf{F}^k\rightarrow F(B)$ weakly in $\mathbf{B}^2$ and it has finite-energy $\sup_{k\ge 1}\mathbb{E}[\mathbf{F}^k,\mathbf{F}^k](T) < +\infty$.

\

\noindent \textbf{Assumption B2:} There exists a positive constant $C_2$ such that

$$|\nabla_x F_t(\textbf{t}(c_t,a)) - \nabla_x F_t(\textbf{t}(c_t,b))|\le C_2 |a-b|^{\gamma_1}$$
for every $t\in [0,T]$, $(a,b)\in \mathbb{R}^2$ and for every $c\in D([0,T];\mathbb{R})$, where $0< \gamma_1 \le 1$.

\

\noindent \textbf{Assumption B3:} There exists a positive constant $C_3$ such that

$$|\nabla_x F_t(\textbf{t}(c_t,a)) - \nabla_x F_t(\textbf{t}(d_t,a))|\le C_3 \sup_{0\le s\le T}|c(s)-d(s)|^{\gamma_2}$$
for every $t\in [0,T]$ and $a\in \mathbb{R}$, where $0< \gamma_2 \le 1$.

\

\noindent \textbf{Assumption B4:} The map $(t,c,x)\mapsto \nabla_x F_t(\textbf{t}(c_t,x))$ is jointly continuous and

$$\sum_{n=0}^\infty |\nabla_xF_{T^k_n}(\textbf{t}(A^k_{T^k_n},B(T^k_n)))|^2\mathds{1}_{\{T^k_n \le t < T^k_{n+1}\}}; k\ge 1$$
is uniformly integrable for every $t\in [0,T]$.

\

\noindent \textbf{Assumption B5:} There exists a positive constant $C_5$ such that

$$| F_{t+h}(c_{t,h}) - F_t(c_t)|\le C_5  h^{\gamma_3}$$
for every $t\in [0,T], h>0$ and $c\in D([0,T];\mathbb{R})$, where $0< \gamma_3 \le 1$.

\

\noindent \textbf{Assumption B6:}

$$\sup_{k\ge 1}\epsilon^{-2}_k\mathbb{E}\Big(|\Delta \mathbf{F}^k(T^k_{N^k(T)})|^2|T^k_{N^k(T)+1} - T|\Big)< \infty$$
where we recall $N^k(T)$ is given by (\ref{numberhitting}). 
\begin{proposition}\label{roughcase}
If $F$ satisfies Assumptions B1, B2, B3, B4, B5 and B6, then $X=F(B)$ is a weakly differentiable Wiener functional where

$$\mathcal{D}X(t) = \nabla_x F(\textbf{t}(B_t,B(t)); 0\le t\le T.$$
\end{proposition}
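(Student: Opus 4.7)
The plan is to mimic the proof of Theorem \ref{FIRSTTH} but with the global assumptions B1--B6 replacing the compact-set localization used there. Concretely, I would take the functional imbedded discrete structure $\mathcal{F}=\big((\mathbf{F}^k)_{k\ge 1},\mathscr{D}\big)$ given by (\ref{funcSTRUCTURE}) and show two things: first, that $\mathbb{D}^{\mathcal{F},k}X$ converges to $\nabla_xF_\cdot(\textbf{t}(B_\cdot,B(\cdot)))$ strongly in $L^2_a(\mathbb{P}\times \mathrm{Leb})$; and second, that $\mathcal{F}$ has finite energy. The first of these identifies the candidate derivative $\mathcal{D}X$, and combined with finite energy it places $X$ in $\mathcal{W}(\mathbb{F})$ via Definition \ref{defweakder}. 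Assumption B1 (which already encodes both the weak $\mathbf{B}^2$-convergence $\mathbf{F}^k\to F(B)$ and the $C^1$ regularity needed below) ensures that $\mathcal{F}$ is a valid imbedded discrete structure to begin with.

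For the first task I would decompose $\mathbb{D}^{\mathcal{F},k}X=I^{k,1}+I^{k,2}$ exactly as in the proof of Theorem \ref{FIRSTTH}. Applying the mean value theorem in the spatial variable (justified by B1) rewrites $I^{k,1}(t)=\nabla_xF_{T^k_n}(\textbf{t}(A^k_{T^k_n-},a_{k,n}))$ on $\{T^k_n\le t<T^k_{n+1}\}$ for a random $a_{k,n}$ satisfying $|a_{k,n}-B(T^k_n)|\le 2\epsilon_k$. Triangle-inequality against $\nabla_xF_t(\textbf{t}(B_t,B(t)))$ produces two error terms $J^{k,1},J^{k,2}$. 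Assumptions B2 and B3, which are uniform in the path (unlike A2, A3 whose constants depended on a compact set), give $J^{k,1}\le C_2|a_{k,n}-B(T^k_n)|^{\gamma_1}+C_3\|A^k-B\|_\infty^{\gamma_2}\to 0$ deterministically as $k\to\infty$; joint continuity in B4 yields $J^{k,2}\to 0$ almost surely, and the uniform integrability clause of B4 promotes this to convergence in $L^2_a(\mathbb{P}\times\mathrm{Leb})$. For $I^{k,2}$ one invokes B5 and the scaling estimate $\mathbb{E}|T^k_n-T^k_{n-1}|^q=C\epsilon_k^{2q}$ from Lemma 5.2 of \cite{LEAO_OHASHI2017.1} to obtain $\mathbb{E}|I^{k,2}(t)|^2\le C\epsilon_k^{4\gamma_3-2}\to 0$, as in Theorem \ref{FIRSTTH}.

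For finite energy I would reproduce the identity
$$\mathbb{E}\int_0^T|\mathbb{D}^{\mathcal{F},k}X(s)|^2ds=\mathbb{E}[\mathbf{F}^k,\mathbf{F}^k](T)-\epsilon_k^{-2}\mathbb{E}\Big(|\Delta\mathbf{F}^k(T^k_{N^k(T)})|^2\,|T^k_{N^k(T)+1}-T|\Big)$$
used in (\ref{enerAlast}). The first term on the right is bounded by the $\sup_k\mathbb{E}[\mathbf{F}^k,\mathbf{F}^k](T)<\infty$ clause of B1, while the second term is controlled uniformly in $k$ precisely by B6; note that B6 is introduced here to play the role that the linear-growth-plus-localization argument played in Theorem \ref{FIRSTTH}. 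Combining the $L^2$-limit of $\mathbb{D}^{\mathcal{F},k}X$ with finite energy and invoking Remark \ref{indeps} yields $X\in\mathcal{W}(\mathbb{F})$ with the asserted derivative.

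The main obstacle I anticipate is the upgrade of the almost-sure convergence of $J^{k,2}$ to $L^2$ convergence without access to the stopping-time localization $S_m$ that made everything trivially bounded in the proof of Theorem \ref{FIRSTTH}. The uniform integrability built into B4 is the decisive ingredient that replaces localization, and its interaction with the a.s.\ convergence provided by joint continuity is the step that must be handled with care; everything else is a largely mechanical adaptation of Theorem \ref{FIRSTTH} together with Proposition \ref{ulocaltime}.
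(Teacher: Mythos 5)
Your proposal is correct and follows exactly the route the paper intends: the paper's own proof of Proposition \ref{roughcase} is simply the remark that it is ``similar to the one given in Theorem \ref{FIRSTTH}'', and you have supplied precisely that adaptation, correctly identifying that the global constants in B2--B3, the uniform integrability in B4, and the bound in B6 are what replace the compact-set localization, the linear growth, and the localized estimate (\ref{enerAlastt}) used in Theorem \ref{FIRSTTH}. Your flagging of the $J^{k,2}$ step (a.s.\ convergence upgraded to $L^2_a(\mathbb{P}\times \mathrm{Leb})$ via the uniform integrability in B4 rather than via localization) as the one point requiring care is exactly right and is the only place where the argument genuinely differs from Theorem \ref{FIRSTTH}.
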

\begin{proof}
The proof is similar to the one given in Theorem \ref{FIRSTTH} so we omit the details.
\end{proof}

In the remainder of this section, let us devote our attention to the following class of Wiener functionals

\begin{equation}\label{roughX}
X(t) = \int_0^t Z_{s}(B_s)dB(s) + \int_0^t Y_{s}(B_s)dg_s(B_s);0\le t\le T,
\end{equation}
where $Y(B)\in \mathcal{W}_p(0,T)~a.s$ and $g(B)\in \mathcal{W}_q(0,T)~a.s$ such that $1/p + 1/q>1$. The case $1\le q < 2$ was studied in \cite{LOS} so that we will restrict the analysis to the less regular case $2 \le q  < \infty$. For simplicity of exposition, we suppose $g = g(B)$ is a deterministic $q$-finite variation continuous function. Under suitable regularity condition on $Z$ and $Y$, we will show that $X$ satisfies (B1-B2-B3-B4-B5-B6). Let us study

$$F_t(\eta_t) = M_t(\eta_t) + J_t(\eta_t)$$
where $M$ and $J$ are non-anticipative functionals (to be defined in (\ref{pathwiseDRIFT}) and (\ref{pathwiseMARTINGALE})) describing the martingale and the drift, respectively in (\ref{roughX}). We assume that $Y(c)\in \mathcal{W}_p(0,T);\frac{1}{p} + \frac{1}{q}> 1$ for every $c\in D([0,T];\mathbb{R})$ and define $J(c)$ as a Young integral-type functional

\begin{equation}\label{pathwiseDRIFT}
J_t(c_t):=\int_0^t Y_s(c_s)dg(s); 0\le t\le T.
\end{equation}
Of course, $\nabla_x J_t(\mathbf{t}(c,x)) = 0$ so that $c\mapsto J(c)$ satisfies B2-B3-B4. By the modulus of continuity of the Young integral, $J(c)\in \mathcal{W}_q(0,T)$ for every $c\in D([0,T];\mathbb{R})$ so that B5 is fulfilled. The candidate for an imbedded structure w.r.t $\int Y(B)dg$ is

$$\mathbf{J}^k(t): = \sum_{n=0}^\infty \int_0^{T^k_n}Y_s(A^k_s)dg(s)\mathds{1}_{\{T^k_n\le t < T^k_{n+1}\}}; 0\le t\le T.$$
Let us assume

\begin{equation}\label{B7}
\lim_{k\rightarrow+\infty}\mathbb{E}\Big\|Y(A^k)\ - Y(B)\Big\|^2_{p;[0,T]}=0
\end{equation}
and there exists $\alpha >1$ such that

\begin{equation}\label{B8}
\sup_{k\ge 1}\mathbb{E}\Bigg|\int^{T^k_{N^k(T)}}_{T^k_{N^k(T)-1}}Y_s(A^k_s)dg(s)\Bigg|^{2\alpha} < \infty.
\end{equation}
By the Young-Lo\'eve estimate (see e.g \cite{young}), there exists a constant $C$ such that

\begin{equation}\label{youngest}
\sup_{0\le t\le t}\Big|\int_0^tY_s(B_s) - Y_s(A^k_s)dg(s)\Big|^2\le C \Big\| Y(A^k)\ - Y(B)\Big\|^2_{p;[0,T]} \big\| g\big\|^2_{q;[0,T]}~a.s.
\end{equation}
Then (\ref{B7}) implies $\lim_{k\rightarrow+\infty}\mathbf{J}^k = J(B)$ weakly in $\mathbf{B}^2(\mathbb{F})$. The $q$-H\"{o}lder modulus of continuity of the paths $t\mapsto J_t(c_t)$ allows us to make the same argument that we did for the term $I^{k,2}$ (in the proof of Theorem \ref{FIRSTTH}) to conclude that

$$\lim_{k\rightarrow+\infty}\mathbb{D}^{\mathbf{J},k}J(B) = 0$$
strongly in $L^2_a(\mathbb{P}\times Leb)$ where $\mathbf{J} = \{\mathbf{J}^k; k\ge 1\}$. This shows $\sup_{k\ge 1}\mathbb{E}[\mathbf{J}^k,\mathbf{J}^k](T) < \infty$ and we conclude $\mathbf{J} = \{\mathbf{J}^k; k\ge 1\}$ is an imbedded discrete structure for $\int Y_s(B_s)dg$ and B1 is fulfilled. Moreover, (\ref{B8}) jointly with Lemma 5.2 in \cite{LEAO_OHASHI2017.1} allow us to state that B6 holds true. Then, $J(B)$ satisfies the assumptions of Proposition \ref{roughcase}.

Let us now treat the martingale component. Let $q_n$ be a sequence such that $\sum_{n\ge 1}q^2_n < \infty$. Let $\rho, \eta\in D([0,T];\mathbb{R}) $ and we fix $ 0 < t \le T$. For $n\ge 1$, we set $a^n_0:=0$ and

$$a^n_{i+1}: = \inf\{t \ge a^n_i; |\rho(t) - \rho(a^n_i)|\ge q_n\}; i\ge 0.$$
We then define

\begin{equation}\label{pathwiseint}
\mathcal{I}_{t,n}\big(\rho_t, \eta_t\big):=\rho(0)\eta(0) + \sum_{i=0}^\infty\rho(a^n_i)\Big( \eta(a^n_{i+1}\wedge t) - \eta(a^n_{i}\wedge t)\Big).
\end{equation}
Observe the right-hand side of (\ref{pathwiseint}) only depends on $(\rho,\eta)$ over the interval $[0,t]$. We observe that the above sum is finite for each $\rho,\eta\in D([0,t];\mathbb{R})$. We then define $\mathcal{I}_t:D([0,t];\mathbb{R})\times D([0,t];\mathbb{R}) \rightarrow\mathbb{R}$ as

$$\mathcal{I}_t\big(\rho_t, \eta_t\big):=\lim_{n\rightarrow+\infty}\mathcal{I}_{t,n}\big(\rho_t, \eta_t\big)$$
if the limit exists and we set $\mathcal{I}_t\big(\rho_t, \eta_t\big):=0$ otherwise. Let

$$E = \Big\{(\rho,\eta); \lim_{n\rightarrow+\infty}\mathcal{I}_{t,n}\big(\rho_t, \eta_t\big)~\text{exists for every}~t\in [0,T]\Big\}$$
From Karandikar \cite{karandikar}, we know that

$$(Z(B),B)\in E~a.s$$
as long as $Z(B)$ has c\`adl\`ag paths. We then define

\begin{equation}\label{pathwiseMARTINGALE}
M_t(c_t): = \mathcal{I}_t\Big( Z_t(c_t), c_t \Big)
\end{equation}
where $Z(c)$ has continuous paths for every $c\in D([0,T];\mathbb{R})$.

\begin{lemma}\label{martingaleREP}
If $(Z(c),c)\in E$ and $s\mapsto Z_s(c_s)$ is continuous, then $\big(Z(\mathbf{t}(c,x)), \textbf{t}(c,x)\big)\in E$ and

$$\mathcal{I}_t\big(Z_t(\textbf{t}(c_t,x)),\textbf{t}(c_t,x)\big) = \mathcal{I}_t \big( Z_t(c_t),c_t \big) + Z_t(c_t)\big(x- c(t)\big)$$
foe every $t\in [0,T]$ and $x\in \mathbb{R}$. In particular, if $s\mapsto Z_s(c_s)$ is continuous for every $c\in D([0,T];\mathbb{R})$, then $\lim_{n\rightarrow+\infty}\mathcal{I}_{t,n}\big( Z_t(c_t), c_t \big)$ exists, if and only if, $\lim_{n\rightarrow+\infty} \mathcal{I}_{t,n}\big(Z_t(\textbf{t}(c_t,x)), \textbf{t}(c_t,x)\big)$ exists for every $x\in \mathbb{R}$.
\end{lemma}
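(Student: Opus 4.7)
The plan is to compare the two Riemann-sum approximations $\mathcal{I}_{t,n}$ for the pair $(\rho_1,\eta_1):=(Z(c),c)$ and the perturbed pair $(\rho_2,\eta_2):=(Z(\textbf{t}(c,x)),\textbf{t}(c,x))$ directly on $[0,t]$, exploiting that the terminal value modification alters the path only at the single instant $s=t$. Since $\textbf{t}(c,x)_s = c_s$ for every $s<t$, one has $\rho_2(s)=Z_s(\textbf{t}(c,x)_s)=Z_s(c_s)=\rho_1(s)$ and $\eta_2(s)=\eta_1(s)=c(s)$ throughout $[0,t)$, so the only discrepancy appears at $s=t$, where $\eta_2(t)-\eta_1(t)=x-c(t)$. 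Because the partition $\{a^n_i\}$ generated in~(\ref{pathwiseint}) depends only on its first argument, the partitions of $\rho_1$ and $\rho_2$ necessarily coincide as long as they remain in $[0,t)$.

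Next I would let $i_n$ denote the largest index with $a^n_{i_n}\le t$ for $\rho_1$. Using continuity of $s\mapsto Z_s(c_s)$ at $s=t$ together with $q_n\to 0$ (which follows from $\sum_n q_n^2<\infty$), one sees that for $n$ sufficiently large $a^n_{i_n}<t$ and $|\rho_1(a^n_{i_n})-Z_t(c_t)|\le q_n\to 0$. A possible extra partition point $\tilde{a}^n_{i_n+1}$ of $\rho_2$ may occur at $t$ when the jump $|\rho_2(t)-\rho_1(a^n_{i_n})|$ exceeds $q_n$, but such terms contribute nothing to the truncated sum because $\tilde{a}^n_j\wedge t=t$ for every $j\ge i_n+1$, making the factor $\eta_2(\tilde{a}^n_{j+1}\wedge t)-\eta_2(\tilde{a}^n_j\wedge t)$ vanish. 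Under this alignment every term indexed by $i<i_n$ in $\mathcal{I}_{t,n}(\rho_2,\eta_2)$ matches the corresponding one in $\mathcal{I}_{t,n}(\rho_1,\eta_1)$ (identical evaluator and identical increment of $c$), so the two sums cancel term-by-term except for the boundary contribution, yielding
\begin{equation*}
\mathcal{I}_{t,n}(\rho_2,\eta_2)-\mathcal{I}_{t,n}(\rho_1,\eta_1)=\rho_1(a^n_{i_n})\bigl(\eta_2(t)-\eta_1(t)\bigr)=\rho_1(a^n_{i_n})\,(x-c(t)).
\end{equation*}

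Letting $n\to\infty$ and invoking continuity once more, $\rho_1(a^n_{i_n})\to Z_t(c_t)$, so the difference converges to $Z_t(c_t)(x-c(t))$. Consequently, one of the sequences $\{\mathcal{I}_{t,n}(\rho_i,\eta_i)\}_n$ converges if and only if the other does; and when both converge the stated identity holds, establishing simultaneously $(Z(\textbf{t}(c,x)),\textbf{t}(c,x))\in E$, the explicit formula, and the final ``if and only if'' assertion of the lemma. The principal technical subtlety I anticipate is the bookkeeping at the boundary $s=t$: one must verify rigorously that a potentially spurious partition point at $t$ for $\rho_2$ introduces no additional nonzero term in the truncated sum, and that the continuity hypothesis on $s\mapsto Z_s(c_s)$ (rather than mere c\`adl\`ag regularity) really does force $a^n_{i_n}\uparrow t$ and $\rho_1(a^n_{i_n})\to Z_t(c_t)$, which is what ultimately identifies the limit on the right-hand side.
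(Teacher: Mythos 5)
Your proposal is correct and follows essentially the same route as the paper's proof: both compare $\mathcal{I}_{t,n}\big(Z_t(\textbf{t}(c_t,x)),\textbf{t}(c_t,x)\big)$ with $\mathcal{I}_{t,n}\big(Z_t(c_t),c_t\big)$ term by term, observe that the partitions and evaluation points agree on $[0,t)$ (with any spurious partition point at $t$ contributing a vanishing increment to the truncated sum), reduce the difference to $Z_{a^n_p}(c_{a^n_p})\big(x-c(t)\big)$, and pass to the limit via $Z_{a^n_p}(c_{a^n_p})\rightarrow Z_t(c_t)$. Your boundary bookkeeping at $s=t$ and the use of $q_n\rightarrow 0$ to identify the limit of the evaluator match the paper's argument, so no gap remains.
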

\begin{proof}
Assume $(Z(c),c)\in E$, $s\mapsto Z_s(c_s)$ is continuous and we fix $x\in \mathbb{R}$ and $t$. Let us denote

$$ Z^n_s(c_s):= \sum_{j=0}^\infty Z_{a^n_j}(c_{a^n_j})\mathds{1}_{\{a^n_j \le s < a^n_{j+1}  \}}$$
where the sequence $a^n_j$ is computed based on the path of $\mapsto Z_s(c_s)$. We observe

$$\mathcal{I}_{t,n}\big(Z_t(c_t), c_t\big)=\mathcal{I}_t\big(Z^n_t(c_t),c_t\big) = Z_0(c_0)c(0) +\sum_{j=0}^{p-1} Z_{a^n_j}(c_{a^n_j}) \big( c(a^n_{j+1})  - c(a^n_j)\big)$$
$$+ Z_{a^n_p}(c_{a^n_p}) \big( c(t) - c(a^n_p)  \big)~\text{on}~\{a^n_p \le t < a^n_{p+1}\}.$$
Of course, by the very definition

$$\lim_{n\rightarrow +\infty}\mathcal{I}_{t,n}\big(Z_t(c_t),c_t\big) = \mathcal{I}_t\big(Z_t(c_t),c_t\big).$$
Moreover,

$$\mathcal{I}_{t,n}\big( Z_t(\textbf{t}(c_t,x)), \textbf{t}(c_t,x) \big) = \mathcal{I}_{t,n}\big( Z_t(c_t), \textbf{t}(c_t,x) \big)$$
and

$$\mathcal{I}_{t,n}\big( Z_t(\textbf{t}(c_t,x)),  \textbf{t}(c_t,x) \big) - \mathcal{I}_{t,n}\big( Z_t(c_t), c_t \big) = Z_{a^n_p}(c_{a^n_p})\big( (x-c(a^n_p)) - (c(t) - c(a^n_p)) \big)~\text{on}~\{a^n_p \le t < a^n_{p+1}\}.$$
By the left-continuity of $Z(c)$, we have

$$\lim_{n\rightarrow+\infty}\Big(\mathcal{I}_{t,n}\big( Z_t(\textbf{t}(c_t,x)),  \textbf{t}(c_t,x) \big) - \mathcal{I}_{t,n}\big( Z_t(c_t), c_t \big)\Big) = Z_t(c_t)\big( x - c(t)\big).$$
This shows that

$$\lim_{n\rightarrow+\infty} \mathcal{I}_{t,n}\big(Z_t(\textbf{t}(c_t,x)), \textbf{t}(c_t,x)\big) = \mathcal{I}_t\big(Z_t(c_t), c_t\big) + Z_t(c_t) \big(x-c(t)\big).$$
This concludes the proof.
\end{proof}

Lemma \ref{martingaleREP} yields $x\mapsto M_t(\textbf{t}(c,x))$ is $C^1(\mathbb{R})$ and B2 is fulfilled, where

$$\nabla_x M_t(\textbf{t}(c,x)) = Z_t(c_t)$$
for each $(x,t)\in \mathbb{R}\times [0,T]$ and $c\in D([0,T];\mathbb{R})$. Moreover,
$$M_t(\textbf{t}(B_t,B(t))) = \int_0^t Z_s(B_s)dB(s)~a.s, 0\le t\le T.$$
The candidate to be a functional imbedded structure is
$$\mathbf{M}^k(t) = \sum_{n=0}^\infty\int_0^{T^k_n}Z_s(A^k_s)dA^k(s)\mathds{1}_{\{T^k_n\le t < T^k_{n+1}\}}; 0\le t\le T.$$
If we assume $\nabla_x M$ satisfies B3-B4, then $\lim_{k\rightarrow +\infty}\mathbf{M}^k = M(B)$ weakly in $\mathbf{B}^2(\mathbb{F})$. Assumption B5 clearly holds true because

$$M_{t+h}(c_{t,h}) - M_t(c_t) = 0$$
for every $h>0$ and $t$. Assumption B6 holds true because

\begin{eqnarray*}
\sup_{k\ge 1}\epsilon^{-2}_k\mathbb{E}|\Delta \mathbf{M}^k(T^k_{N^k(T)})|^2\big|T^k_{N^k(T)}-T\big| &=& \sup_{k\ge 1}\epsilon^{-2}_k\mathbb{E}\Bigg|\int_{T^k_{N^k(T)-1}}^{N^k(T)} Z_s(A^k_s)dA^k(s)\Bigg|^2\big|T^k_{N^k(T)}-T\big|\\
&\le&\sup_{k\ge 1}\mathbb{E}\Big|Z_{T^k_{N^k(T)-1}}\big(A^k_{T^k_{N^k(T)-1}}\big)\Big|^2|T^k_{N^k(T)}-T|< \infty
\end{eqnarray*}
due to the uniform integrability assumption in B4. Finally, if we define

\begin{equation}\label{roughGAS}
\mathbf{F}^k(t) = \mathbf{M}^k(t) + \mathbf{J}^k(t); 0\le t \le T,
\end{equation}
then by applying Proposition \ref{roughcase} to the imbedded discrete structure (\ref{roughGAS}), we conclude $X$ given by (\ref{roughX}) is weakly differentiable. By applying Proposition \ref{ulocaltime}, we arrive at the following result.
\begin{theorem}\label{roughcasePROP}
Let $X$ be the Wiener functional given by (\ref{roughX}) with a functional representation

$$X(t) = J_t(B_t) + M(B_t)$$
given by (\ref{pathwiseDRIFT}) and (\ref{pathwiseMARTINGALE}). Assume $Y$ satisfies (\ref{B7}), (\ref{B8}), $\nabla_x M = Z$ satisfies B3-B4 and $Z(c)$ has continuous paths for every $c\in D([0,T];\mathbb{R})$. Then, $X$ is weakly differentiable,

$$\mathcal{D}X = Z(B)$$
and
$$\int_0^\cdot Y(s)dg(s) = \lim_{k\rightarrow \infty}\Bigg[\int_0^\cdot \mathbb{D}^{\mathcal{Y},k,h}X(s)ds - \frac{1}{2}\int_0^\cdot\int_{-\infty}^{+\infty}\nabla^ {\mathcal{Y},k,v}X(s,x)d_{(s,x)}\mathbb{L}^{k,x}(s)\Bigg]$$
weakly in $\mathbf{B}^2(\mathbb{F})$ for every stable imbedded discrete structure $\mathcal{Y}$ associated with $X$.
\end{theorem}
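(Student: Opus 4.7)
The plan is to reduce the theorem to an application of Proposition \ref{roughcase} followed by Proposition \ref{ulocaltime}, by verifying that the candidate structure $\mathbf{F}^k = \mathbf{M}^k + \mathbf{J}^k$ from (\ref{roughGAS}) satisfies the assumptions B1--B6. Most of the verification is essentially laid out in the discussion preceding the statement, so the proof is a matter of carefully assembling these pieces after splitting $F_t(c_t) = M_t(c_t) + J_t(c_t)$ with $M$ and $J$ defined in (\ref{pathwiseMARTINGALE}) and (\ref{pathwiseDRIFT}).

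First I would treat the drift functional $J$. Assumption B5 is immediate from the $q$-variation modulus of continuity of the Young integral $t \mapsto \int_0^t Y_s(c_s)dg(s)$, while B2--B3--B4 are trivial since $\nabla_x J \equiv 0$. To obtain B1 for the imbedded candidate $\mathbf{J}^k$, I would combine the Young--Loève estimate (\ref{youngest}) with hypothesis (\ref{B7}) to deduce $\mathbf{J}^k \to J(B)$ weakly in $\mathbf{B}^2(\mathbb{F})$. The finite energy and the operator convergence $\mathbb{D}^{\mathbf{J},k} J(B) \to 0$ in $L^2_a(\mathbb{P}\times\mathrm{Leb})$ follow from the same $q$-Hölder modulus estimate combined with the $L^q$-control $\mathbb{E}|T^k_{N^k(t)} - T^k_{N^k(t)-1}|^q = C 2^{-2kq}$ (Lemma 5.2 in \cite{LEAO_OHASHI2017.1}), exactly as for the term $I^{k,2}$ in the proof of Theorem \ref{FIRSTTH}. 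Finally, B6 for $\mathbf{J}^k$ is exactly assumption (\ref{B8}).

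For the martingale functional $M$, Lemma \ref{martingaleREP} gives both $x \mapsto M_t(\mathbf{t}(c,x)) \in C^1(\mathbb{R})$ and the explicit gradient $\nabla_x M_t(\mathbf{t}(c,x)) = Z_t(c_t)$, so B2 reduces to the hypothesis that $Z$ satisfies the Hölder-type condition B2 (here B2 for $M$ is implicit since $\nabla_x M$ does not depend on $x$). B3 and B4 for $M$ are precisely the assumed regularity of $Z$. Assumption B5 is trivial since the horizontal extension $c_{t,h}$ satisfies $M_{t+h}(c_{t,h}) = M_t(c_t)$ (no new jumps in $c$ nor in $Z(c)$ over $[t,t+h]$). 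For B6, I would bound $|\Delta \mathbf{M}^k(T^k_{N^k(T)})|$ by approximately $|Z_{T^k_{N^k(T)-1}}(A^k_\cdot)| \cdot \epsilon_k$, yielding a uniform $\epsilon_k^{-2}$ control against $\mathbb{E}|T^k_{N^k(T)+1}-T|$ via the uniform integrability hypothesis in B4, as sketched in the paragraph preceding the theorem. The weak convergence $\mathbf{M}^k \to M(B)$ needed in B1 follows from a standard martingale approximation argument based on B3--B4.

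Once B1--B6 are verified for $\mathbf{F}^k$, Proposition \ref{roughcase} yields that $X = F(B)$ is weakly differentiable with $\mathcal{D}X = \nabla_x F(\mathbf{t}(B_\cdot, B(\cdot))) = Z(B)$, and Proposition \ref{ulocaltime} then produces the stated variational representation of the drift $V_X = \int_0^\cdot Y_s\, dg(s)$ as the weak $\mathbf{B}^2$-limit of the horizontal plus occupation-time integrals, for every stable imbedded discrete structure satisfying (\ref{i1}). I expect the main technical obstacle to be the verification of finite energy and B6 for the Young component $\mathbf{J}^k$: these rely on simultaneously controlling the $p$-variation of $Y(A^k)$ uniformly in $k$ (through (\ref{B7})) and the $L^{2\alpha}$-integrability of the terminal Young integral block (\ref{B8}), both of which must be combined with the small-time moment estimates for the hitting times of the skeleton $\mathscr{D}$.
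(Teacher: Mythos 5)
Your proposal is correct and follows essentially the same route as the paper: the paper's proof of Theorem \ref{roughcasePROP} is precisely the discussion preceding the statement, namely splitting $F=M+J$, verifying B1--B6 for $\mathbf{J}^k$ via the Young--Lo\'eve estimate, (\ref{B7}), (\ref{B8}) and the $I^{k,2}$-type hitting-time argument, verifying them for $\mathbf{M}^k$ via Lemma \ref{martingaleREP} and the assumed B3--B4 on $Z$, and then invoking Proposition \ref{roughcase} and Proposition \ref{ulocaltime}. The pieces you assemble and the order in which you assemble them match the paper's argument.
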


\section{Weak differentiability and local times}\label{Youngsection}
In this section, we investigate in detail the convergence (\ref{v}) which characterizes the non-martingale component of a given $X\in \mathcal{W}(\mathbb{F})$ in Proposition \ref{ulocaltime}. The existence of

$$
\lim_{k\rightarrow \infty} \int_0^\cdot \mathbb{D}^{\mathcal{Y},k,h}X(s)ds
$$
requires strong regularity in typical examples path-dependent Wiener functionals and it is similar in nature to the existence of the horizontal derivative $\nabla^h F$ in the sense of \cite{dupire,cont}. The most interesting and non-trivial object to be analyzed is

\begin{equation}\label{FUNDA_LIMIT}
\int_0^\cdot\int_{-\infty}^{+\infty}\nabla^ {\mathcal{Y},k,v}X(s,x)d_{(s,x)}\mathbb{L}^{k,x}(s).
\end{equation}
In this section, we devote our attention to the limit (\ref{FUNDA_LIMIT}). In particular,  we aim to show that for a given $X\in \mathcal{W}(\mathbb{F})$, the existence of the limit (\ref{FUNDA_LIMIT}) is connected with the two-dimensional $(p,q)$-variation regularity of the Wiener functional $X$. This connection will be explored by means of the 2D-Young integral \cite{young1}. The precise identification of the limit (\ref{FUNDA_LIMIT}) is not a trivial task. In terms of Young integration theory (see e.g~\cite{young,young1}), the first obstacle towards the asymptotic limit~(\ref{FUNDA_LIMIT}) is to handle two-dimensional variation of $(\mathbb{L}^{k,x}(t), \ell^x(t))$ over $\mathbb{R}\times [0,T]$ simultaneously with the infinitesimal behavior of the past of a Brownian path composed with the functional $F$. In order to prove convergence of the local-time integrals in (\ref{FUNDA_LIMIT}), sharp maximal estimates on the number of crossings and the 2D Young integrals play key roles (see \cite{OS,OS1}) for the obtention of the limit. In this article, we focus our attention to Young's approach and the analysis of other possible characterizations depending on the roughness of $X$ will be discussed elsewhere.

The analysis of the limit (\ref{FUNDA_LIMIT}) without imposing pathwise regularity conditions on a given explicit functional $F$ realizing (\ref{functionalrepresentation}) is very challenging and we postpone this question to a further investigation. In the remainder of this section, we then fix a functional imbedded discrete structure $\mathcal{F} = \big((\textbf{F}^k)_{k\ge 1},\mathscr{D}\}$ of the form~(\ref{funcSTRUCTURE}) where $F$ realizes (\ref{functionalrepresentation}) and $\mathscr{D}$ is driven by the sequence $\epsilon_k = 2^{-k}$. In the sequel, $d_{(s,x)}$ and $d_x$ are computed in the sense of a 2D and 1D Young integral, respectively. See the seminal L.C Young's articles~\cite{young,young1} for further details. In this section, it will be more convenient to work with processes defined on the whole period $[0,+\infty)$ but keeping in mind that we are just interested on the bounded interval $[0,T]$. For this reason, all processes are assumed to be defined over $[0,+\infty)$ but they vanish after the finite time $T$.

In this section, it is convenient to work with the following ``clock" modification of $\mathbb{L}^k$ as follows

$$
L^{k,x}(t): = \sum_{j\in\mathbb{Z}} l^{k,j2^{-k}}(t)1\!\!1_{S^k_j}(x),
$$
where

$$
\l^{k,x}(t):= \frac{1}{2^ {-k}}\int_0^t 1\!\!1_{\{|A^ k(s-) - x| < 2^{-k} \}}d[A^k,A^k](s);~k\ge 1,~x\in \mathbb{R},~ 0\le t < +\infty.
$$

For each $k\ge 1$ and $x\in \mathbb{R}$, let $j_k(x)$ be the unique integer such that $(j_k(x)-1)2^{-k} < x \le j_k(x)2^{-k}$ and $N^k(t) = \max \{n; T^k_n \le t\}$ is the length of the embedded random walk until time $t$. By the very definition, $L^{k,x}(t) = l^{k,j_k(x)2^{-k}}(t);~(x,t)\in \mathbb{R}\times [0,\infty).$ More precisely      ,

\begin{eqnarray*}
L^{k,x}(t)&=& 2^{-k}\#\ \big\{n \in \{1, \ldots, N^k(t)-1\}; A^k(T^k_{n}) =j_k(x)2^{-k}\big\} \\
& &\\
&=&2^{-k}\Big(u(j_k(x)2^{-k},k,t) + d(j_k(x)2^{-k},k,t)\Big);~(x,t)\in \mathbb{R}\times [0,+\infty)
\end{eqnarray*}
where
$$u(j_k(x)2^{-k},k,t):= \#\ \big\{n \in \{1, \ldots, N^k(t)-1\}; A^k(T^k_{n-1}) =(j_k(x)-1)2^{-k}, A^k(T^k_{n}) =j_k(x)2^{-k}\big\};$$

$$d(j_k(x)2^{-k},k,t):= \#\ \big\{n \in \{1, \ldots, N^k(t)-1\}; A^k(T^k_{n-1}) =(j_k(x)+1)2^{-k}, A^k(T^k_{n}) =j_k(x)2^{-k}\big\};$$
for~$x\in \mathbb{R}, k\ge 1, 0\le t < \infty.$

In the sequel, we denote $\{\ell^x(t); (x,t)\in \mathbb{R}\times [0,+\infty)\}$ as the standard Brownian local-time, i.e., it is the unique jointly continuous random field which realizes

$$\int_{\mathbb{R}}f(x)\ell^x(t)dx = \int_0^tf(B(s))ds\quad \forall t > 0~\text{and measurable}~f:\mathbb{R}\rightarrow \mathbb{R}.$$

We now state a result which plays a key role for establishing the existence of limit (\ref{FUNDA_LIMIT}). In the remainder of this section, we set $I_m:= [-2^m,2^m]$ for a positive integer $m\ge 1$.

\begin{lemma}\label{lemmaLk}
For each $m\ge 1$, the following properties hold:

\

\noindent (i) $L^{k,x}(t)\rightarrow \ell^x(t)\quad\text{a.s uniformly in}~(x,t)\in I_m\times [0,T]$ as $k\rightarrow \infty.$

\

\noindent (ii) $\sup_{k\ge 1}\mathbb{E}\sup_{x\in I_m}\|L^{k,x}(\cdot)\|^p_{[0,T]; 1}< \infty$ and $\sup_{k\ge 1}\sup_{x\in I_m}\|L^{k,x}(\cdot)\|_{[0,T]; 1}< \infty~a.s$ for every $p\ge 1$.


\

\noindent (iii) $\sup_{k\ge 1}\mathbb{E}\sup_{t\in [0,T]}\| L^k(t)\|^{2+\delta}_{I_m;2+\delta}< \infty$ and $\sup_{k\ge 1}\sup_{t\in [0,T]}\| L^k(t)\|^{2+\delta}_{I_m;2+\delta}< \infty~a.s$~for every $\delta>0$.
\end{lemma}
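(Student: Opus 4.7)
The plan rests on three pillars: the a.s. strong approximation $\sup_s|A^k(s)-B(s)|\le 2^{-k}$ together with $[A^k,A^k](t)\to t$ a.s.; the identification of $L^{k,x}(t)=2^{-k}(u+d)$ with a normalized count of visits of the embedded random walk $\{A^k(T^k_n)\}_n$ to the dyadic level $j_k(x)2^{-k}$; and a discrete Tanaka-type decomposition for the pure-jump martingale $A^k$ which, via the Burkholder-Davis-Gundy inequality, delivers the necessary moment and H\"{o}lder-in-$x$ estimates. A Garsia-Rodemich-Rumsey argument then upgrades pointwise moment bounds to uniform variation bounds. I would prove (iii) first, since its spatial H\"{o}lder estimate drives the other two parts.

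For (iii), the core step is the Kolmogorov-type spatial moment bound
\[
\mathbb{E}|L^{k,x}(t)-L^{k,y}(t)|^{q}\le C_{m,q}\,|x-y|^{q/2-\varepsilon},\qquad x,y\in I_m,\ t\in[0,T],
\]
valid uniformly in $k$ for every $q$ large and $\varepsilon>0$ small. It is obtained by applying a discrete Tanaka identity
\[
L^{k,x}(t)=|A^k(t)-x|-|A^k(0)-x|-\int_0^t\mathrm{sgn}(A^k(s-)-x)\,dA^k(s)+R_k(x,t),\qquad |R_k(x,t)|\le C2^{-k},
\]
and subtracting at levels $x$ and $y$: the pathwise part is of order $|x-y|$, while the difference of stochastic integrals is controlled by BDG with quadratic variation bounded by $\int_0^t\mathbf{1}_{\{A^k(s-)\in[x\wedge y,x\vee y]\}}d[A^k,A^k](s)$, whose $q/2$-moment is in turn controlled by a direct combinatorial estimate on the number of visits of the embedded random walk to the strip $[x\wedge y,x\vee y]$. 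A Garsia-Rodemich-Rumsey inequality converts this pointwise estimate into uniform $(2+\delta)$-variation in $x$ with a random constant of bounded $(2+\delta)$-moments, and uniformity in $t$ follows from an analogous joint Kolmogorov bound in $(x,t)$ together with the monotonicity of $t\mapsto L^{k,x}(t)$.

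Part (ii) follows readily: since $\|L^{k,x}(\cdot)\|_{[0,T];1}=L^{k,x}(T)$ by monotonicity, the single-level bound $\sup_k\mathbb{E}L^{k,x}(T)^p<\infty$ is immediate from the Tanaka identity and BDG, and a chaining argument over the dyadic grid of $I_m$ using the H\"{o}lder estimate of (iii) delivers $\sup_k\mathbb{E}\sup_{x\in I_m}L^{k,x}(T)^p<\infty$; the a.s. bound follows from the $L^p$-bound together with Fatou. For (i), I would use the occupation identity
\[
\int_{\mathbb{R}}\varphi(x)L^{k,x}(t)\,dx=\int_0^t\varphi_k(A^k(s-))\,d[A^k,A^k](s),
\]
valid for $\varphi\in C_c(\mathbb{R})$ with dyadic step-regularization $\varphi_k$, and pass to the limit using $A^k\to B$ uniformly and $[A^k,A^k](t)\to t$ to identify the limit through the Brownian occupation formula. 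A.s. uniform convergence on $I_m\times[0,T]$ is then obtained via a Dini-type argument combining pointwise convergence of $\int\varphi(x)L^{k,x}(t)dx\to\int\varphi(x)\ell^x(t)dx$, the a.s. equicontinuity in $x$ coming from (iii), monotonicity in $t$, and joint continuity of the Brownian local time.

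The principal difficulty lies in step (iii): all constants must remain independent of $k$ although the jump size of $A^k$ and the spatial scale of the local-time grid are both $\epsilon_k=2^{-k}$. This requires careful book-keeping in the discrete Tanaka identity so as to avoid logarithmic-in-$k$ losses in the chaining argument, and the BDG constants must be uniformly controlled through the occupation-time bound on $\int_0^t\mathbf{1}_{\{A^k(s-)\in[x,y]\}}d[A^k,A^k](s)$ rather than the naive pointwise estimate $2^{-k}N^k(T)$, which blows up.
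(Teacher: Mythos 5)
Your overall architecture (discrete Tanaka identity, BDG, chaining) is plausible in spirit, but three steps do not close, and they are not minor. First, the key estimate $\mathbb{E}|L^{k,x}(t)-L^{k,y}(t)|^{q}\le C|x-y|^{q/2-\varepsilon}$ \emph{uniformly in $k$ for all $x,y\in I_m$} is false: $L^{k,\cdot}(t)$ is by construction a step function, constant on the cells $S^k_j$ of width $2^{-k}$, and for $x,y$ in adjacent cells with $|x-y|=\eta\to 0$ the increment equals $l^{k,j2^{-k}}(t)-l^{k,(j\pm1)2^{-k}}(t)$, whose $q$-th moment is of order $2^{-kq/2}$, not $\eta^{q/2-\varepsilon}$. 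Your own BDG bound betrays this: the quadratic variation of the difference of stochastic integrals is the occupation time of the strip $[x\wedge y,x\vee y]$ by $A^k$, which for sub-grid $|x-y|$ is still of order $2^{-k}\sup_z\ell^z$, so the honest bound is $C(|x-y|+2^{-k})^{q/2}$. Consequently Garsia--Rodemich--Rumsey cannot be applied to the field on all of $I_m$; one must restrict the chaining to the grid $2^{-k}\mathbb{Z}\cap I_m$ (where the $(2+\delta)$-variation of the step field is in fact attained) and invoke a discrete Besov--variation embedding with constants independent of the mesh. Second, in (ii) and (iii) you deduce the almost-sure statements ``$\sup_{k\ge1}(\cdots)<\infty$ a.s.'' from uniform-in-$k$ moment bounds ``together with Fatou.'' This implication is invalid: $\sup_k\mathbb{E}Y_k^p<\infty$ does not yield $\sup_kY_k<\infty$ a.s. (take $Y_k=k^{1/p}1\!\!1_{E_k}$ with independent $E_k$, $\mathbb{P}(E_k)=1/k$). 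One needs either a maximal inequality that puts the supremum over $k$ \emph{inside} the expectation, or an a.s.\ rate valid simultaneously for all large $k$. Third, for (i) a uniform $(2+\delta)$-variation bound does not give equicontinuity (a single unit jump has finite $p$-variation for every $p$), so your Dini-type upgrade from weak convergence of the occupation measures to a.s.\ uniform convergence does not go through.

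For comparison, the paper's proof sidesteps all three obstacles by importing sharp classical inputs: (i) follows from Knight's theorem on the convergence of normalized upcrossing numbers to $\tfrac12\ell^x(t)$ (plus an increment estimate from Peres--M\"orters for the downcrossings); the a.s.\ and $L^p$ bounds in (ii) follow from Barlow's maximal inequality, which controls $\mathbb{E}\sup_{k}\sup_{x}|L^{k,x}(T)|^p$ with the supremum over $k$ inside; and (iii) rests on a counting argument you would need some version of anyway: the field changes value at no more than $2\cdot2^{k+m}$ points of $I_m$, each deviation $|U^k_p(t,x)-\tfrac12\ell^x(t)|$ is a.s.\ $O(2^{-k/2}k^{1/2})$ by Khoshnevisan's exact rate, so the excess $(2+\delta)$-variation beyond that of $\ell$ itself (finite by Feng--Zhao) is $O(2^{k}\cdot2^{-k(2+\delta)/2}k^{1+\delta/2})\to0$ --- this is precisely where the exponent $2+\delta>2$ is consumed. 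If you want to keep your self-contained route, the repair is: prove the moment bound only on the grid with the additive $2^{-k}$ correction, replace Fatou by a Borel--Cantelli or maximal-inequality argument for the a.s.\ claims, and replace the Dini argument in (i) by an actual a.s.\ rate of convergence of the crossing counts.
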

\begin{proof}
The component $2^{-k}u(j_k(x)2^{-k},k,t)\rightarrow \frac{1}{2}\ell^x(t)$ a.s uniformly over $(x,t)\in I_m\times [0,T]$ due to the classical Th. 4.1 in~\cite{knight}. By writing $2^{-k}d(j_k(x)2^{-k},k,t) - \frac{1}{2}\ell^x(t) = 2^{-k}d(j_k(x)2^{-k},k,t) - 2^{-k}d((j_k(x)-1)2^{-k},k,t) + 2^{-k}d((j_k(x)-1)2^{-k},k,t) - \frac{1}{2}\ell^x(t)$ and using Lemma 6.23 in~\cite{peres}, we conclude that item (i) holds.
The proof of (ii) is quite simple because $ L^{k,x}(\cdot)$ has increasing paths a.s for each $k\ge 1$ and $x\in I_m$. In fact, Th.1 in~\cite{Barlow} yields

$$
\mathbb{E}\sup_{k\ge 1}\sup_{x\in I_m}\Bigg|\sup_{\Pi}\sum_{t_i\in \Pi} |L^{k,x}(t_{i}) - L^{k,x}(t_{i-1})|\Bigg|^p\le \mathbb{E}\sup_{r\ge 1}\sup_{x\in I_m}|L^{r,x}(T)|^p<\infty,
$$
for every $p\ge 1$. This shows that (ii) holds. For the proof of item (iii), it will be sufficient to check only for the upcrossing component of $L^k$. Let us fix $M,\delta>0$. For a given partition $\Pi=\{x_i\}_{i=0}^N$ of the interval $I_m$, let us define the following subset $\Lambda(\Pi,k): = \{x_i\in \Pi; (j_k(x_i)-j_k(x_{i-1}))2^{-k} >0 \}$. We notice that $\#\ \Lambda(\Pi,k)\le 22^{k+m}$ for every partition $\Pi$ of $I_m$. To shorten notation, let us write $U_p^k(t,x): = 2^{-k}u^k(j_k(x)2^{-k},t);~(x,t)\in I_m \times [0,T]$. We readily see that

\begin{equation}\label{es1}
\sum_{x_i\in \Pi}|U_p^{k}(t,x_i)  - U_p^{k}(t,x_{i-1})|^{2+\delta}\le \sum_{x_i\in \Lambda(\Pi,k)}|U_p^{k}(t,x_i)  - U_p^{k}(t,x_{i-1})|^{2+\delta}~a.s,
\end{equation}
for every partition $\Pi$ of $I_m$. By writing $|U_p^{k}(t,x_i)  - U_p^{k}(t,x_{i-1})| = |U_p^{k}(t,x_i) - 1/2\ell^{x_i}(t) + 1/2\ell^{x_i}(t) - 1/2\ell^{x_{i-1}}(t) + 1/2\ell^{x_{i-1}}(t) - U_p^{k}(t,x_{i-1})|;~x_i\in \Lambda(\Pi,k)$ and applying the standard inequality $|\alpha -\beta|^{2+\delta}\le 2^{1+\delta}\{|\alpha|^{2+\delta} + |\beta|^{2+\delta}\};~\alpha,\beta\in \mathbb{R}$, the bound~(\ref{es1}) yields

\begin{eqnarray}
\nonumber \sup_{0\le t\le T}\|U_p^k(t)\|^{2+\delta}_{I_m;2+\delta} &\le& 2^{1+\delta}\sup_{0\le t\le T}\|1/2\ell^x(t)\|^{2+\delta}_{I_m;2+\delta} + \nonumber2^{1+\delta}\sup_{0\le t\le T}\sup_{\Pi}\sum_{x_i\in \Lambda(\Pi,k)}|U_p^k(t,x_i) - 1/2\ell^{x_i}(t)|^{2+\delta}\\
\nonumber& &\\
\label{l2}&+& 2^{1+\delta}\sup_{0\le t\le T}\sup_{\Pi}\sum_{x_i\in \Lambda(\Pi,k)}|U_p^k(t,x_{i-1}) - 1/2\ell^{x_{i-1}}(t)|^{2+\delta}~a.s~\text{for every}~k\ge 1.
\end{eqnarray}
An inspection in the proof of Lemma 2.1 in~\cite{feng} yields $\sup_{0\le t\le T}\|\ell^\cdot(t)\|^{2+\delta}_{I_m;2+\delta}<\infty$. By applying Th. 1.4 and Remark 1.7.1 in~\cite{kho}, we get for every $k$ larger than a positive random number, the following bound
\small
\begin{eqnarray*}
\sup_{0\le t\le T}\sup_{\Pi}\sum_{x_i\in \Lambda(\Pi,k)}|U^k_p(t,x_i) - 1/2\ell^{x_i}(t)|^{2+\delta}&\le& \sup_{0\le t\le T}\sup_{x\in I_m}|U^k_p(x,t) - 1/2\ell^x(t)|^{2+\delta}2 2^{k+m}\\
& &\\
&\le& 2 2^{k+m}2^{-\frac{k}{2}(2+\delta)}k^{\frac{2+\delta}{2}}(log(2))^{\frac{2++\delta}{2}} \Big(M + \sup_{0\le t\le T}\sqrt{2}\sqrt{\ell^*(t)} \Big)^{2+\delta}\\
& &\\
&\le& (C + C\ell^*(T)^{1+\frac{\delta}{2}}) 2^{-\frac{k\delta}{2}}k^{1+\frac{\delta}{2}},
\end{eqnarray*}
\normalsize
where $C$ is a positive constant which only depends on $(M,n)$ and $\ell^*(t):=\sup_{x\in I_m}\ell^x(t);~0\le t\le T$. The last term in~(\ref{l2}) can be treated similarly. By using the fact that $2^{-\frac{k\delta}{2}}k^{1+\frac{\delta}{2}} = O(1)$ and $\ell^*(T) < \infty$ a.s, we conclude that $\sup_{k\ge 1}\sup_{t\in [0,T]}\| L^k(t)\|^{2+\delta}_{I_m;2+\delta}< \infty~a.s$~for every $\delta>0$. It remains to check the $L^{2+\delta}$-bound in (iii) which is more delicate than the related almost sure bound. We refer the reader to Corollary 2.1 in~\cite{OS1} for a detailed proof of this bound.
\end{proof}

In order to get an explicit limit for the space-time local time integrals in terms of a 2D Young integral, we need to assume some pathwise regularity conditions. More precisely, let $\mathcal{C}^1$ be the set of non-anticipative functionals $\{F_t; 0\le t\le  T\}$ such that $x\mapsto F_s(\textbf{t}(c_s,x))$ is $C^1(\mathbb{R})$ for each $c\in D([0,+\infty);\mathbb{R})$, $s\ge 0$.

\

\noindent \textbf{Assumption L1}: For every c\`adl\`ag function $c$, the map $c\mapsto \nabla_xF_s(\textbf{t}(c_s,x))$ is continuous uniformly in the time variable $s$, i.e., for every compact set $K\subset \mathbb{R}$ and $\varepsilon > 0$ there exists $\delta>0$ such that $\sup_{0\le s\le  T}|c(s) - d(s)|< \delta \Longrightarrow \sup_{0\le s\le  T}\sup_{x\in K}|\nabla_x F_s(\textbf{t}(c_s,x)) - \nabla_xF_s(\textbf{t}(d_s,x))| < \varepsilon$.

\

\begin{remark}
One should notice that this implies, in particular, that if $c^k$ is a sequence of c\`adl\`ag functions such that $\sup_{0\le t\le T} |c^k(t)-d(t)| \to 0$ as $k\to \infty$, then $\sup_{0\le s\le  T}\sup_{x\in K}|\nabla_x F_s(\textbf{t}(c^k_s,x)) - \nabla_xF_s(\textbf{t}(d_s,x))|\to 0$, as $k\to\infty$ for every compact set $K\subset \mathbb{R}$.
\end{remark}

The following lemma is straightforward in view of the definitions. We left the details of the proof to the reader.

\begin{lemma}\label{lemmayoungnabla}
If $F\in \mathcal{C}^1$ satisfies Assumption $\textbf{(L1)}$, then $\nabla^{\mathcal{F},v,k}F_s(B_s,x)\rightarrow \nabla_xF_s(\textbf{t}(B_s,x))$ a.s uniformly in $(x,s)\in I_m\times [0,T]$ for every $m\ge 1$.
\end{lemma}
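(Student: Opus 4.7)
The plan is to apply the mean value theorem to express $\nabla^{\mathcal{F},k,v}F_s(B_s,x)$ as a point evaluation of the gradient functional $\nabla_x F$, and then transfer the resulting pointwise estimate to $\nabla_x F_s(\textbf{t}(B_s,x))$ by means of Assumption \textbf{(L1)} together with the uniform bound $\sup_{0\le u \le T}|A^k(u)-B(u)|\le 2^{-k}$ a.s. Fix $(s,x)\in [0,T]\times I_m$ and let $j=j_k(x)$ be the unique integer with $x\in S^k_j=((j-1)2^{-k},j2^{-k}]$. Since $F\in\mathcal{C}^1$, the map $y\mapsto F_s(\textbf{t}(A^k_s,y))$ is $C^1(\mathbb{R})$, so the mean value theorem supplies a point $\xi_k(s,x)\in [(j-1)2^{-k},j2^{-k}]$ with
\[
\nabla^{\mathcal{F},k,v}F_s(B_s,x)=\nabla_x F_s(\textbf{t}(A^k_s,\xi_k(s,x))),\qquad |\xi_k(s,x)-x|\le 2^{-k}.
\]

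The quantity to control is then $|\nabla_x F_s(\textbf{t}(A^k_s,\xi_k))-\nabla_x F_s(\textbf{t}(B_s,x))|$, which I would split via the triangle inequality into a ``path-change'' term $T_1(s,x)=|\nabla_x F_s(\textbf{t}(A^k_s,\xi_k))-\nabla_x F_s(\textbf{t}(B_s,\xi_k))|$ and an ``endpoint-change'' term $T_2(s,x)=|\nabla_x F_s(\textbf{t}(B_s,\xi_k))-\nabla_x F_s(\textbf{t}(B_s,x))|$. For the path-change term I would apply Assumption \textbf{(L1)} directly with $c=A^k$, $d=B$ on the compact set $K=[-2^m-1,2^m+1]$ (which contains $\xi_k(s,x)$ for every $x\in I_m$ and $k$ large enough). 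Since $\sup_u|A^k(u)-B(u)|\le 2^{-k}\to 0$ a.s., \textbf{(L1)} yields
\[
\sup_{s\in[0,T]}\sup_{y\in K}|\nabla_x F_s(\textbf{t}(A^k_s,y))-\nabla_x F_s(\textbf{t}(B_s,y))|\longrightarrow 0\quad a.s.,
\]
and evaluating at $y=\xi_k(s,x)$ gives $\sup_{(s,x)\in[0,T]\times I_m}T_1(s,x)\to 0$ a.s.

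The main obstacle is the endpoint-change term $T_2$: one needs uniform continuity in $y$ of $y\mapsto \nabla_x F_s(\textbf{t}(B_s,y))$ that holds uniformly in $s\in[0,T]$. By the $\mathcal{C}^1$ assumption this map is continuous in $y$ for each fixed $s$, and the trajectory $B(\cdot,\omega)$ is uniformly continuous on $[0,T]$ for a.e. $\omega$; combining these two facts with Assumption \textbf{(L1)} (applied to suitable extensions of the paths $\textbf{t}(B_s,\xi_k)$ and $\textbf{t}(B_s,x)$ on $[0,T]$, which differ only near time $s$ by at most $2^{-k}$ in sup-norm) yields joint continuity of $(s,y)\mapsto \nabla_x F_s(\textbf{t}(B_s,y))$ on the compact set $[0,T]\times K$. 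Compactness then promotes this to uniform continuity, so that $|\xi_k(s,x)-x|\le 2^{-k}$ forces $\sup_{(s,x)}T_2(s,x)\to 0$ a.s. Putting the two estimates together establishes the claimed a.s. uniform convergence on $I_m\times[0,T]$.
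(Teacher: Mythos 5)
Your reduction via the mean value theorem, writing $\nabla^{\mathcal{F},k,v}F_s(B_s,x)=\nabla_xF_s(\textbf{t}(A^k_s,\xi_k(s,x)))$ with $|\xi_k(s,x)-x|\le 2^{-k}$, and the splitting into a path-change term $T_1$ and an endpoint-change term $T_2$ is the natural route (the paper leaves the proof to the reader, so there is nothing to compare against). The treatment of $T_1$ is correct: Assumption \textbf{(L1)} applied with $c=A^k$, $d=B$, $K=[-2^m-1,2^m+1]$, together with $\sup_{u}|A^k(u)-B(u)|\le 2^{-k}$ a.s., gives $\sup_{(s,x)\in[0,T]\times I_m}T_1(s,x)\to 0$ a.s.

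The gap is in $T_2$. Assumption \textbf{(L1)} compares $\nabla_xF_s(\textbf{t}(c_s,x))$ with $\nabla_xF_s(\textbf{t}(d_s,x))$ for the \emph{same} inserted terminal value $x$; since $\textbf{t}(c_s,x)$ discards $c(s)$ and replaces it by $x$, \textbf{(L1)} carries no information about how $\nabla_xF_s(\textbf{t}(B_s,\cdot))$ varies in its last argument, nor about continuity in $s$. In particular, feeding ``extensions of $\textbf{t}(B_s,\xi_k)$ and $\textbf{t}(B_s,x)$'' into \textbf{(L1)} as $c$ and $d$ produces the comparison of $\nabla_xF_s(\textbf{t}(B_s,y))$ with itself (both $\textbf{t}(c_s,y)$ and $\textbf{t}(d_s,y)$ reduce to $\textbf{t}(B_s,y)$), i.e.\ the vacuous bound $0<\varepsilon$. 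So the claimed joint continuity of $(s,y)\mapsto\nabla_xF_s(\textbf{t}(B_s,y))$ does not follow from $\mathcal{C}^1$ plus \textbf{(L1)} plus uniform continuity of $B$: the $\mathcal{C}^1$ hypothesis gives continuity in $y$ for each fixed $s$ with a modulus that may degenerate as $s$ varies (for a functional with $\nabla_xF_s(\textbf{t}(c_s,y))=g(s,y)$ depending only on $(s,y)$, \textbf{(L1)} holds trivially, yet $y\mapsto g(s,y)$ continuous for each $s$ does not yield equicontinuity over $s\in[0,T]$, and then $T_2$ need not go to zero uniformly). To close the argument you need an additional hypothesis giving equicontinuity of $y\mapsto\nabla_xF_s(\textbf{t}(B_s,y))$ uniformly in $s$ --- for instance the H\"older condition of Assumption A2/B2, or Assumption \textbf{(L2.1)}, which is in force wherever the lemma is actually invoked (Proposition \ref{convyoung} and Theorem \ref{youngTh}): applying the double-increment bound of \textbf{(L2.1)} to the partition $\{0,s\}\times\{x,\xi_k\}$ yields $|T_2(s,x)|\le|\nabla_xF_0(\textbf{t}(B_0,\xi_k))-\nabla_xF_0(\textbf{t}(B_0,x))|+MT^{1/q_1}|\xi_k-x|^{1/q_2}$, and the first term is controlled by the uniform continuity on $K$ of the single continuous function $y\mapsto\nabla_xF_0(\textbf{t}(B_0,y))$.
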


Let us now introduce additional hypotheses in order to work with 2D-Young integrals. We refer the reader to Young~\cite{young1} for further background. Let us denote 
$$\Delta_ih(t_i,x_j): = h(t_i,x_j)-h(t_{i-1},x_j)$$ 
as the first difference operator acting on the variable $t$ of a given function $h:[0,T]\times[-L,L]\rightarrow \mathbb{R}$.

\

\noindent \textbf{Assumption~L2.1}: Assume for every $L>0$, there exists a positive constant $M$ such that

\begin{equation}\label{l2.1}
|\Delta_i\Delta_j\nabla_xF_{t_i}(\textbf{t}(B_{t_i},x_j))|\le M|t_i-t_{i-1}|^{\frac{1}{q_1}}|x_j-x_{j-1}|^{\frac{1}{q_2}}~a.s
\end{equation}
for every partition $\Pi= \{t_i\}_{i=0}^N\times \{x_j\}_{j=0}^{N^{'}}$ of $[0,T]\times [-L,L]$, where $q_1,q_2>1$. There exists $\alpha\in (0,1),\delta >0, p \ge 1$ such that $\min\{\alpha + \frac{1}{q_1}, \frac{1-\alpha}{2+\delta} + \frac{1}{q_2}\} > 1$, $\frac{1}{p} + \frac{1}{2+\delta}>1$ and

$$
\sup_{0\le t\le T}\|\nabla_xF_t(\textbf{t}(B_t,\cdot))\|_{[-L,L];p}\in L^{\infty}(\mathbb{P}),
$$
for every $L> 0$.

\

\noindent \textbf{Assumption~L2.2}: In addition to assumption \textbf{(L2.1)}, let us assume  $\forall L >0$, there exists $M>0$ such that

\begin{equation}\label{l2.3}
\sup_{k\ge 1} |\Delta_i\Delta_j\nabla^{\mathcal{F},k,v}F_{t_i}(B_{t_i},x_j)|\le M |t_i-t_{i-1}|^{\frac{1}{q_1}}|x_j-x_{j-1}|^{\frac{1}{q_2}} ~a.s.
\end{equation}
for every partition $\Pi= \{t_i\}_{i=0}^N\times \{x_j\}_{j=0}^{N^{'}}$ of $[0,T]\times [-L,L]$, and

\begin{equation}\label{l2.4}
\sup_{k\ge 1}\sup_{0\le t\le T}\| \nabla^{\mathcal{F},k,v}F_t(B_t,\cdot)\|_{[-L,L],p} \in L^\infty(\mathbb{P}).
\end{equation}

\begin{remark}\label{localremark}
In the language of rough path theory, assumption~(\ref{l2.1}) precisely says that if $q=q_1=q_2$ then $\nabla_x F_t(\textbf{t}(B_t,x))$ admits a 2D-control $\omega([t_1,t_2]\times [x_1,x_2]) = |t_1-t_2|^{\frac{1}{q}}|x_1-x_2|^{\frac{1}{q}}$ so that~(\ref{l2.1}) trivially implies that $(t,x)\mapsto \nabla_x F_t(\textbf{t}(B_t,x))$ has $q$-joint variation in the sense of~\cite{friz}. Assumptions \textbf{(L2.1-L2.2)} are sufficiently rich to accommodate a large class of examples.
\end{remark}

We are now in position to state our first approximation result which makes heavily use of the pathwise 2D Young integral in the context of the so-called $(p,q)$-bivariation rather than joint variation. We refer the reader to the seminal Young article~\cite{young1}(section 6) for further details.

\begin{proposition}\label{convyoung}
Let $F\in\mathcal{C}^1$ satisfy assumptions (\textbf{L1,L2.1,~L2.2}). Then, for every $t\in [0,T]$ and $m\ge 1$, we have

\begin{equation}\label{f3}
\lim_{k\rightarrow \infty}\int_0^t \int_{-2^m}^{2^m}\nabla^{\mathcal{F},k,v}F_r(B_r,x)d_{(r,x)} L^{k,x}(r)=\int_0^t\int_{-2^m}^{2^m}\nabla_x  F_r(\textbf{t}(B_r,x))d_{(r,x)}\ell^{x}(r)\quad \text{a.s},
\end{equation}
where the right-hand side of~(\ref{f3}) is interpreted as the pathwise 2D Young integral.
\end{proposition}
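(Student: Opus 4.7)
The plan is to view Proposition \ref{convyoung} as a pathwise continuity statement for 2D Young integrals, where both the integrand and the integrator converge uniformly while their mixed bivariation norms remain uniformly bounded. Fix $m\ge 1$ and work on $\Omega$-a.s on the rectangle $R:=[0,t]\times [-2^m, 2^m]$. The building blocks are already in place: Lemma \ref{lemmayoungnabla} gives $\nabla^{\mathcal{F},k,v}F_\cdot(B_\cdot,\cdot)\to \nabla_xF_\cdot(\textbf{t}(B_\cdot,\cdot))$ uniformly on $R$, and Lemma \ref{lemmaLk}(i) gives $L^{k,\cdot}(\cdot)\to \ell^{\cdot}(\cdot)$ uniformly on $R$. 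The remaining ingredients are the bivariation controls: Assumptions \textbf{L2.1}--\textbf{L2.2} supply a uniform $(q_1,q_2)$-bivariation bound on both $\nabla_x F$ and $\nabla^{\mathcal{F},k,v}F$ on $R$, plus uniform control of $\sup_s\|\nabla^{\mathcal{F},k,v}F_s(B_s,\cdot)\|_{[-L,L];p}$, while Lemma \ref{lemmaLk}(ii)--(iii) supply, uniformly in $k$, finite $1$-variation in time of $L^{k,x}$ and finite $(2+\delta)$-variation in space.

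The core of the argument is the telescoping decomposition
\begin{equation*}
\int_R \nabla^{\mathcal{F},k,v}F\, d_{(r,x)}L^{k,x}-\int_R \nabla_x F\, d_{(r,x)}\ell^x=A_k+B_k,
\end{equation*}
where $A_k:=\int_R\big(\nabla^{\mathcal{F},k,v}F-\nabla_xF\big)\,d_{(r,x)}L^{k,x}$ and $B_k:=\int_R\nabla_xF\,d_{(r,x)}(L^{k,x}-\ell^x)$. Each term will be controlled by Young's 2D inequality applied with the roles of integrand and integrator split. Recall that Young's inequality in the bivariation setting (\cite{young1}, Section 6) controls such a double integral by a sum of cross products of a uniform norm of one factor times a bivariation seminorm of the other, as soon as the bivariation exponents satisfy the Young condition. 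Here the parameters in \textbf{L2.1} have been chosen precisely so that $\alpha+1/q_1>1$ pairs with the time $1/\alpha$-variation of $L^{k,x}$ (dominated by the $1$-variation), and $(1-\alpha)/(2+\delta)+1/q_2>1$ pairs with the spatial $(2+\delta)$-variation of $L^{k,x}$; the condition $1/p+1/(2+\delta)>1$ handles the boundary 1D Young integrals that appear after summation by parts on $R$. The same indices apply to $\ell^x$ because it inherits these variation bounds as an a.s. uniform limit of $L^{k,x}$.

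The final passage to the limit will rely on the standard interpolation lemma from Young theory: if $h^k\to h$ uniformly on $R$ and $\sup_k\|h^k\|_{(q_1,q_2)}<\infty$, then for any $(q_1',q_2')$ with $q_j'>q_j$ one has $\|h^k-h\|_{(q_1',q_2')}\to 0$. Applied with $h^k=\nabla^{\mathcal{F},k,v}F-\nabla_xF$, this gives $A_k\to 0$ by Young's inequality, since the bivariation exponents can be relaxed slightly while still meeting the Young condition against the integrator $L^{k,x}$. An analogous interpolation for $L^{k,x}-\ell^x$, based on Lemma \ref{lemmaLk}(ii)--(iii), yields $B_k\to 0$ via Young's inequality tested against $\nabla_xF$. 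Combining $A_k\to 0$ and $B_k\to 0$ concludes the proof.

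The main obstacle I anticipate is the interpolation step: one must verify pathwise that uniform convergence together with uniformly bounded $(q_1,q_2)$-bivariation (resp. mixed 1-variation/$(2+\delta)$-variation) forces convergence in a slightly weaker mixed bivariation seminorm, which is the only way to actually pass the limit under Young's 2D continuity. A subsidiary technical point is to account correctly for the boundary traces $\{0\}\times[-2^m,2^m]$, $[0,t]\times\{\pm 2^m\}$ and the corner terms produced by summation by parts in the definition of the 2D Young integral; each of these reduces to a 1D Young integral and is handled by the assumption $1/p+1/(2+\delta)>1$ together with the uniform convergences already established.
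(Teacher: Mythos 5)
Your decomposition into $A_k+B_k$ and the interpolation idea describe a legitimate general strategy for passing to the limit in Young integrals, and your reading of the exponent bookkeeping in \textbf{L2.1} is essentially correct. The genuine gap is at the very first step: the left-hand side of (\ref{f3}) is not a priori a 2D Young integral. It is defined by the ad hoc formula (\ref{intwrtl}), and both the integrand $\nabla^{\mathcal{F},k,v}F_r(B_r,\cdot)$ and the integrator $L^{k,\cdot}(r)$ are step functions in $x$ with jumps at exactly the same points $j2^{-k}$ and with the same one-sided continuity there. Because of these common discontinuities you cannot simply invoke ``Young's 2D inequality'' for $A_k=\int(\nabla^{\mathcal{F},k,v}F-\nabla_xF)\,d_{(r,x)}L^{k,x}$: the existence theorems and estimates of \cite{young1} do not apply to such pairs without further argument, and the object you are estimating has not been identified with anything to which they do apply. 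The paper resolves precisely this point by performing a summation by parts first (the identity analogous to (4.5) of \cite{feng}, in the spirit of Lemma \ref{parts}), which rewrites both sides of (\ref{f3}) with $L^{k,x}$ (resp.\ $\ell^x$) as the \emph{integrand} and $\nabla^{\mathcal{F},k,v}F$ (resp.\ $\nabla_xF$) as the \emph{integrator}, plus a 1D boundary integral in $x$; only in that swapped form do Theorems 6.3 and 6.4 of \cite{young1} and the term-by-term integration theorem of \cite{young} apply, fed by Lemmas \ref{lemmaLk} and \ref{lemmayoungnabla}. This is also why \textbf{L2.1} is phrased with the $\alpha$-interpolated condition pairing the $(1,2+\delta)$-bivariation of the occupation times \emph{as integrands} against the $(q_1,q_2)$-control of $\nabla_xF$ \emph{as integrator}.

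Two further points you defer would need real work. First, the 2D analogue of your interpolation lemma (uniform convergence plus uniformly bounded mixed bivariation implies convergence in a slightly weaker mixed seminorm) is not in \cite{young1} and must be proved for the $(q_1,q_2)$-seminorms, not just for joint variation; the paper sidesteps it by using the convergence theorems directly on the swapped form. Second, the boundary terms are not innocuous: at $x=-2^m$ the edge contributions do not vanish for free, and the paper has to enlarge the spatial domain to $[-2^m-1,2^m]$ and set all fields to zero on $[-2^m-1,-2^m)$ to make them disappear; the assertion that they are ``handled by $1/p+1/(2+\delta)>1$'' does not address this. With the summation-by-parts step inserted your argument essentially collapses onto the paper's proof, so the route as written is not yet complete.
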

\begin{proof}
Let us fix $t\in [0,T]$ and a positive integer $m$. Let $\{0=s_0 < s_1<\ldots < s_p =t\}$ and $\{-2^m = x_0 < x_1 < \ldots < x_l=2^m\}$ be partitions of $[0,t]$ and $[-2^m,2^m]$, respectively. Similar to identity~(4.5) in~\cite{feng}, we shall write

\begin{eqnarray*}
\sum_{i=0}^{l-1}\sum_{j=0}^{p-1}\nabla_xF_{s_j}(\textbf{t}(B_{s_j},x_i))\Delta_i\Delta_j \ell^{x_{i+1}}(s_j+1)&=&\sum_{i=1}^{l}\sum_{j=1}^{p}\ell^{x_{i}}(s_j)  \Delta_i\Delta_j\nabla_x F_{s_j}(\textbf{t}(B_{s_j},x_i))\\
& &\\
&-& \sum_{i=1}^l\ell^{x_i}(t)\Delta_i F_t(\textbf{t}(B_t,x_i))
\end{eqnarray*}
From Lemmas 2.1,~2.2 in~\cite{feng}, we know that $\{\ell^x(s); 0\le s\le T; x \in I_m\}$ has $(1,2+\delta)$-bivariations a.s for every $\delta > 0$ (See Young~\cite{young1}, p.~583 for further details). Then by applying Th. 6.3 in~\cite{young1} and the one dimensional existence theorem in~\cite{young} together with \textbf{L2.1}, we have

\begin{eqnarray*}
\int_0^t\int_{-2^m}^{2^m}\nabla_x F_s(\textbf{t}(B_s,x))d_{(s,x)}\ell^x(s)&=& \int_0^t\int_{-2^m}^{2^m}\ell^x(s)d_{(s,x)}\nabla_xF_s(\textbf{t}(B_s,x))\\
& &\\
&-& \int_{-2^m}^{2^m}\ell^x(t)d_x\nabla_xF_t(\textbf{t}(B_t,x))~a.s.
\end{eqnarray*}
We do the same argument to write

\begin{eqnarray}
\nonumber\int_0^t\int_{-2^m}^{2^m}\nabla^{\mathcal{F},k,v}F_s(B_s,x)d_{(s,x)}L^{k,x}(s) &=& \nonumber\int_0^t\int_{-2^m}^{2^m}L^{k,x}(s)d_{(s,x)}\nabla^{\mathcal{F},k,v}F_s(B_s,x)\\
\label{splitk}& &\\
\nonumber&-& \int_{-2^m}^{2^m}L^{k,x}(t)d_x\nabla^{\mathcal{F},k,v}F_t(B_t,x);a.s~0\le t\le T; k\ge 1.
\end{eqnarray}
Now we apply $\textbf{(L1-L2.1-L2.2)}$, Lemma~\ref{lemmaLk} together with Th. 6.3, 6.4 in~\cite{young1} and the term by term integration theorem in~\cite{young} to conclude that

$$\lim_{k\rightarrow \infty}\int_0^t\int_{-2^m}^{2^m}L^{k,x}(s)d_{(s,x)}\nabla^{\mathcal{F},k,v}F_s(B_s,x) = \int_0^t\int_{-2^m}^{2^m}\nabla_x F_s(\textbf{t}(B_s,x))d_{(s,x)}\ell^x(s)~a.s
$$

$$
\lim_{k\rightarrow \infty}\int_{-2^m}^{2^m}L^{k,x}(t)d_x\nabla^{\mathcal{F},k,v}F_t(B_t,x) = \int_{-2^m}^{2^m}\ell^x(t)d_x\nabla_xF_t(\textbf{t}(B_t,x))~a.s$$
up to some vanishing conditions on the boundaries $t=0$ and $x=-2^m$. They clearly vanish for $t=0$. For $x=-2^m$ we have to work a little. In fact, we will enlarge our domain in $x$, from $[-2^m,2^m]$ to $[-2^m-1,2^m]$, and define for all functions with $-2^m-1\le x < -2^m$, the value $0$, that is, for the functions $L^{k,x}(s), \ell^x(s), \nabla_x F_s(\textbf{t}(B_s,x))$ and $\nabla^{\mathcal{F},v,k}F_s(B_s,x)$ we put the value $0$, whenever $x\in [-2^m-1,-2^m)$. Then, it is easy to see, that all the conclusions of Lemmas \ref{lemmaLk} and \ref{lemmayoungnabla} still hold true, and in this case, $L^{k,-2^m-1}(s)=0$ and $\ell^{-2^m-1}(s)=0$ for all $s$. Thus, we can apply Theorems~6.3 and 6.4 in~\cite{young1} on the interval $[0,t]\times [-2^m-1,2^m]$. This concludes the proof.
\end{proof}
Next, we need to translate convergence~(\ref{f3}) into $L^1$ convergence. The maximal inequality derived by~\cite{OS} (see Th. 1.3 and Corollary 1.1) based on bivariations plays a crucial role in the proof of the following lemma. See Remark~\ref{localremark}.


\begin{proposition}\label{ltlema}
Assume that $F\in\mathcal{C}^1$ satisfies assumptions~\textbf{(L1-L2.1-L2.2)}. Then, for every non-negative random variable $J$, we have

\begin{equation}\label{ltc}
\lim_{k\rightarrow \infty}\int_0^J\int_{-2^m}^{2^m}\nabla^{\mathcal{F},k,v}F_r(B_r,x)d_{(r,x)}L^{k,x}(r) = \int_0^J\int_{-2^m}^{2^m}\nabla_x F_r(\textbf{t}(B_r,x))d_{(r,x)}\ell^x(t)
\end{equation}
strongly in $L^1(\mathbb{P})$ for every $m\ge 1$.
\end{proposition}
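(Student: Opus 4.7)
My plan is to first reduce the claim to a uniform-in-$t$ statement and then apply Vitali's theorem. Since all processes in sight vanish after the deterministic terminal time $T$, I may replace $J$ by $J\wedge T$, so it suffices to prove that
$$
E_k^*:=\sup_{0\le t\le T}\Big|\int_0^{t}\int_{-2^m}^{2^m}\nabla^{\mathcal{F},k,v}F_r(B_r,x)\,d_{(r,x)}L^{k,x}(r)-\int_0^{t}\int_{-2^m}^{2^m}\nabla_x F_r(\textbf{t}(B_r,x))\,d_{(r,x)}\ell^{x}(r)\Big|
$$
tends to $0$ in $L^1(\mathbb{P})$, since the absolute value at $t=J\wedge T$ is dominated by $E_k^*$. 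The strategy is then the standard ``a.s.\ plus uniform integrability $\Rightarrow L^1$'' route.

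For the a.s.\ part, Proposition \ref{convyoung} already delivers pointwise-in-$t$ a.s.\ convergence. I would upgrade it to \emph{a.s.\ uniform} convergence on $[0,T]$ by an equicontinuity argument: the two-dimensional Love--Young inequality, together with Lemma \ref{lemmaLk}(ii)--(iii) and Assumptions \textbf{(L2.1)}--\textbf{(L2.2)}, produces an a.s.\ modulus of continuity for the map $t\mapsto \int_0^{t}\int \nabla^{\mathcal{F},k,v}F\,d_{(r,x)}L^{k,x}$ that is uniform in $k$. Combining this equicontinuity with pointwise convergence on a countable dense subset of $[0,T]$ forces uniform a.s.\ convergence of the Young integrals, hence $E_k^*\to 0$ a.s.

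For the uniform-integrability part, I would invoke the sharp 2D Young-integral maximal inequality of \cite{OS} (Th.~1.3 and Corollary~1.1), which is tailored to bivariation integrators: together with the $L^{\infty}(\mathbb{P})$ $p$-variation bounds from \textbf{(L2.2)}, the bivariation bound in \textbf{(L2.1)} for $\nabla_x F_t(\textbf{t}(B_t,\cdot))$, and the uniform-in-$k$ $L^{2+\delta}$ bivariation bound for $L^{k,x}$ from Lemma \ref{lemmaLk}(iii), it yields some exponent $q>1$ with $\sup_k\mathbb{E}|E_k^*|^q<\infty$. Vitali's theorem then promotes the a.s.\ convergence of $E_k^*$ to $L^1(\mathbb{P})$ convergence, completing the proof. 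The main technical obstacle will be the uniform-in-$k$ equicontinuity of the approximating 2D Young integrals in $t$ used in the a.s.\ step: matching the bivariation exponents so that the Love--Young estimate applies (precisely the inequalities $\min\{\alpha+1/q_1,(1-\alpha)/(2+\delta)+1/q_2\}>1$ and $1/p+1/(2+\delta)>1$ built into \textbf{(L2.1)}) and tracking the constants uniformly in $k$ via \textbf{(L2.2)} and Lemma \ref{lemmaLk} is where the real work lies.
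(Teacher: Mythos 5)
Your proposal is correct and follows essentially the same route as the paper: almost sure convergence supplied by Proposition~\ref{convyoung}, combined with uniform integrability extracted from the 2D Young maximal inequality of \cite{OS} together with the bivariation bounds of Lemma~\ref{lemmaLk} and assumptions \textbf{(L2.1--L2.2)}, concluded by Vitali's theorem. The only difference is that you control the supremum over $t\in[0,T]$ so as to evaluate at the random time $J$, which makes explicit a uniformity-in-$t$ point that the paper leaves implicit when it applies Proposition~\ref{convyoung} directly at $J$.
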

\begin{proof}
Let us fix a positive random variable $J$. In view of~(\ref{splitk}) and Proposition~\ref{convyoung}, it is sufficient to check that $|\int_0^J\int_{-2^m}^{2^m}L^{k,x}(s)d_{(s,x)}\nabla^{\mathcal{F},k,v}F_s(B_s,x)| + |\int_{-2^m}^{2^m}L^{k,x}(J)d_{x}\nabla^{\mathcal{F},k,v}F_J(B_J,x)|=: I^k_1 + I^k_2;k\ge 1,$ is uniformly integrable. By the classical Young inequality based on simple functions, the following bound holds

$$I^k_2\le C_1 \Big(L^{k,-2^m}(J) + \|L^{k}(J)\|_{[-2^m,2^m];2+\delta}\Big)~a.s$$
where $C_1$ is a constant depending on $\delta$ and~(\ref{l2.4}). From Th. 1 in~\cite{Barlow} and item (ii) in Lemma~\ref{lemmaLk}, we conclude that $\{I^k_2; k\ge 1\}$ is uniformly integrable.

A direct application of Cor. 1.1 in~\cite{OS} together with~\textbf{(L2.2)} yield

\begin{eqnarray}
\nonumber\Big|\int_0^J\int_{-2^m}^{2^m}L^{k,x}(s)d_{(s,x)}\nabla^{\mathcal{F},k,v}F_s(B_s,x)\Big| &\le& K_0L^{k,2^m}(T) +  K\|L^{k}\|^\alpha_{time,1}\|L^{k}\|^{1-\alpha}_{space,2+\delta}\\
\nonumber& &\\
\label{st11}&+& K_1\|L^k\|_{time,1} + K_2\|L^k\|_{space,2+\delta}
\end{eqnarray}

where
$$\|L^{k}\|_{space,2+\delta}:=\sup_{(t,s)\in [0,T]^2}\|L^k(t) - L^k(s)\|_{I_m;2+\delta},$$

$$\|L^{k}\|_{time,1}:=\sup_{(x,y)\in I_m\times I_m}\|L^{k,x} - L^{k,y}\|_{[0,T];1}.$$
Here $K_0$ is a constant which comes from assumption~(\ref{l2.3}) and $K,K_1,K_2$ are positive constants which only depend on the constants of assumption $\textbf{(L2.1, L2.2)}$ namely $\alpha,q_1,q_2,\delta, T \wedge J, m$. From Lemma~\ref{lemmaLk}, we have $\sup_{k\ge 1}\mathbb{E}\|L^{k}\|^{2+\delta}_{space,2+\delta} < \infty$, $\mathbb{E}\sup_{k\ge 1}\|L^{k}\|^q_{time,1} < \infty$ for every $q\ge 1$. Again Th.1 in~\cite{Barlow} yields the uniform integrability of $\{L^{k,2^m}(T);k\ge 1\}$. So we only need to check uniform integrability of $\{\|L^{k}\|^\alpha_{time,1}\|L^{k}\|^{1-\alpha}_{space,2+\delta}; k\ge 1\}$ in~(\ref{st11}). For $\beta > 1$, we apply H\"{o}lder inequality to get

$$\mathbb{E}\|L^{k}\|^{\beta\alpha}_{time,1}\|L^{k}\|^{(1-\alpha)\beta}_{space,2+\delta}\le \big(\sup_{r\ge 1}\mathbb{E}\|L^r\|_{space;2+\delta}\big)^{1/p}\big(\sup_{r\ge 1}\mathbb{E}\|L^r\|^{\alpha\beta q}_{time;1}\big)^{1/q};~k\ge 1$$
where $p=\frac{1}{(1-\alpha)\beta} >1,~q = \frac{p}{p-1}=\frac{1}{1-(1-\alpha)\beta}$ with $\alpha \in (0,1)$. By applying Lemma~\ref{lemmaLk}, we conclude that $\{I^k_1; k\ge 1\}$ is uniformly integrable. Finally, Proposition~\ref{convyoung} allows us to conclude that (\ref{ltc}) holds.


\end{proof}

\begin{lemma}\label{fkst}
For every $\mathbb{F}$-stopping time $S$, there exists a sequence of non-negative random variables $\{J^k ; k\ge  1 \}$ such that
$J^k$ is an $\mathbb{F}^k$-predictable stopping time for each $k \ge 1$ and $\lim_{k\rightarrow \infty} J^k = S$ a.s.
\end{lemma}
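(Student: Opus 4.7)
The plan is to reduce, via two successive approximation and diagonalization steps, to stopping times of the form $S = \sum_{i=1}^N t_i \mathbf{1}_{A_i}$ with deterministic $t_i$ and disjoint $A_i \in \mathcal{F}_{t_i}$, and then to combine density of $\mathcal{F}^k_{t_i}$-measurable events in $\mathcal{F}_{t_i}$ with the quasi-left-continuity of $\mathbb{F}^k$ at deterministic times. First I would replace $S$ by $S \wedge n$; after constructing approximants $\{J^{k,n};k\ge 1\}$ for each bounded truncation $S\wedge n$, a diagonal extraction produces a single $\mathbb{F}^k$-predictable sequence converging to $S$ a.s. Next I would apply the standard upper dyadic approximation $S^{(n)} := 2^{-n}\lceil 2^n S\rceil$, a non-increasing sequence of $\mathbb{F}$-stopping times taking finitely many dyadic values with $S^{(n)} \downarrow S$ a.s.; a second diagonal then reduces the problem to stopping times taking finitely many deterministic values.

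For such a finite-valued $S = \sum_{i=1}^N t_i \mathbf{1}_{A_i}$, the core construction is to produce, for each fixed $i$, events $A^k_i \in \mathcal{F}^k_{t_i}$ with $\mathbf{1}_{A^k_i} \to \mathbf{1}_{A_i}$ a.s. as $k \to \infty$. After disjointifying via $\widetilde A^k_i := A^k_i \setminus \bigcup_{j<i} A^k_j$, I would define
$$
J^k := \sum_{i=1}^N t_i \mathbf{1}_{\widetilde A^k_i} + (+\infty)\mathbf{1}_{\Omega \setminus \bigcup_i \widetilde A^k_i}.
$$
Each summand $t_i \mathbf{1}_{\widetilde A^k_i}$ is an $\mathbb{F}^k$-predictable stopping time: because $A^k$ has only totally inaccessible jumps at the Brownian hitting times $T^k_n$, the filtration $\mathbb{F}^k$ is quasi-left-continuous, giving $\mathcal{F}^k_{t_i-} = \mathcal{F}^k_{t_i}$ up to null sets, and the sequence $(t_i - 1/m)\mathbf{1}_{\widetilde A^k_i} + \infty\mathbf{1}_{\Omega \setminus \widetilde A^k_i}$ announces it. The convergence $J^k \to S$ a.s. follows directly from $\widetilde A^k_i \to A_i$ a.s. for each $i$.

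The density step producing $A^k_i$ exploits the uniform pathwise approximation $\sup_{0\le s\le t_i}|A^k(s) - B(s)| \le \epsilon_k \to 0$ a.s.: the linear interpolation $\widehat A^k$ of $A^k$ at its jump times is $\mathbb{F}^k$-adapted, continuous, and converges uniformly to $B$ a.s. For any bounded continuous functional $F : C([0,t_i];\mathbb{R}) \to \mathbb{R}$, $F(\widehat A^k|_{[0,t_i]})$ is $\mathcal{F}^k_{t_i}$-measurable and converges a.s. to $F(B|_{[0,t_i]})$. Since bounded continuous functionals are dense in $L^2(\Omega, \mathcal{F}_{t_i}, \mathbb{P})$ by the Radon regularity of Wiener measure on $C([0,t_i];\mathbb{R})$, one can pick $F_m$ with $F_m(B|_{[0,t_i]}) \to \mathbf{1}_{A_i}$ in $L^2$; a standard diagonal-and-threshold procedure then delivers $A^k_i := \{F_{m(k)}(\widehat A^k|_{[0,t_i]}) \ge 1/2\} \in \mathcal{F}^k_{t_i}$ with $\mathbf{1}_{A^k_i} \to \mathbf{1}_{A_i}$ a.s., for a suitable choice of $m(k)$ via Borel--Cantelli.

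The main obstacle is precisely this density step. The filtrations $\mathbb{F}^k$ are not nested in $k$ (different scales $\epsilon_k$ give random walks encoding genuinely different information), so L\'evy's martingale convergence theorem is not directly available to approximate $\mathcal{F}_t$-events by $\mathcal{F}^k_t$-events. The continuous-functional workaround above, leveraging the Radon regularity of Wiener measure together with $\widehat A^k \to B$ uniformly, is what bridges this gap and allows the argument to proceed without requiring nestedness of $\{\mathbb{F}^k\}_{k\ge 1}$.
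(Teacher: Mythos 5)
Your route is genuinely different from the paper's. The paper approximates the $\mathbb{F}$-predictable graph $[[S,S]]$ by $\mathbb{F}^k$-predictable sets (quoting Lemma 2.2 of Le\~ao--Ohashi 2013) and then extracts the times $J^k$ via the predictable section theorem; you instead reduce to finite-valued stopping times by truncation and upper dyadic approximation, and then approximate each level set $A_i\in\mathcal{F}_{t_i}$ by $\mathcal{F}^k_{t_i}$-measurable events, the key input being the uniform bound $\sup_{s}|A^k(s)-B(s)|\le\epsilon_k$. Your scheme is more elementary and self-contained (no section theorem, no external lemma), at the price of a three-level diagonal extraction, which does go through via convergence in probability plus Borel--Cantelli. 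The overall architecture is sound.

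Two steps, however, fail as written and must be repaired. First, the linear interpolation $\widehat A^k$ of $A^k$ at its jump times is \emph{not} $\mathbb{F}^k$-adapted: on $(T^k_n,T^k_{n+1})$ its value depends on $T^k_{n+1}$ and $A^k(T^k_{n+1})$, so $F(\widehat A^k|_{[0,t_i]})$ need not be $\mathcal{F}^k_{t_i}$-measurable --- and this measurability is exactly what you need to get $A^k_i\in\mathcal{F}^k_{t_i}$. The fix is to drop the interpolation: cylindrical functionals $f(B(s_1),\dots,B(s_d))$ with $f$ bounded continuous and $s_1<\dots<s_d<t_i$ are dense in $L^2(\mathcal{F}_{t_i})$ (the augmented Brownian filtration is continuous, so $\mathcal{F}_{t_i-}=\mathcal{F}_{t_i}$), while $f(A^k(s_1),\dots,A^k(s_d))$ is $\mathcal{F}^k_{s_d}$-measurable and converges a.s.\ by the uniform bound. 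Second, your announcing sequence $(t_i-1/m)\mathbf{1}_{\widetilde A^k_i}+\infty\mathbf{1}_{(\widetilde A^k_i)^c}$ does not consist of stopping times, since $\widetilde A^k_i\in\mathcal{F}^k_{t_i}$ need not belong to $\mathcal{F}^k_{t_i-1/m}$; moreover the quasi-left-continuity of $\mathbb{F}^k$, while true (it follows from the absolute continuity of $\langle A^k,A^k\rangle$, which forces the jump times of $A^k$ to be totally inaccessible), is carrying real weight and deserves a proof rather than an assertion. Both problems disappear simultaneously if you take $A^k_i\in\mathcal{F}^k_{u_i}$ with $u_i<t_i$ as above: then $\bigl(u_i\vee(t_i-1/m)\bigr)\mathbf{1}_{\widetilde A^k_i}+m\,\mathbf{1}_{(\widetilde A^k_i)^c}$ is a genuine announcing sequence, each restricted time $(t_i)_{\widetilde A^k_i}$ is predictable without any appeal to quasi-left-continuity, and $J^k$ is predictable as a finite minimum of predictable times.
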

\begin{proof}
We can repeat the same steps of the proof of Lemma 2.2 in~\cite{LEAO_OHASHI2013}, to get for a given $\mathbb{F}$-predictable set $O$, a sequence $O^k$ of $\mathbb{F}^k$-predictable set such that $\mathbb{P}[\pi(O) - \pi(O^k)]\rightarrow 0$ as $k\rightarrow \infty$, where $\pi$ is the usual projection of $\mathbb{R}_+\times \Omega$ onto $\Omega$. Apply the usual Section Theorem~(see e.g~\cite{he}) on each $O^k$ and on the graph $[[S,S]] = \{(\omega,t);S(\omega=t)\}$ as in [Lemma 3.3;~\cite{LEAO_OHASHI2013}] to get a sequence $\bar{J}^k$ of $\mathbb{F}^k$-predictable stopping times such that $\lim_{k\rightarrow \infty}\bar{J}_k=J~a.s$ on $\{J < \infty\}$. By defining, $J^k:=\bar{J}^k$ on $\{J< \infty\}$ and $J^k:=+\infty$ on $\{J=+\infty\}$, we get the desired sequence.
\end{proof}

Now we are in position to state the main result of this section.

\begin{theorem}\label{youngTh}
Let $F\in \mathcal{C}^1$ be a non-anticipative functional such that $F(B)\in \mathcal{W}(\mathbb{F})$ equipped with a stable imbedded discrete structure $\mathcal{F} = \big((\mathbf{F}^k)_{k\ge 1},\mathscr{D}\big)$. Assume that $\mathcal{D}^{\mathcal{F},h}F(B)$ exists in the sense of (\ref{SMOOTHDER}), $F$ satisfies assumptions~\textbf{(L1,~L2.1,~L2.2)} and~(\ref{i1}) holds for $\mathcal{F}$. Then the differential representation of $F(B)\in \mathcal{W}(\mathbb{F})$ is given by

$$F_t(B_t) = F_0(B_0) + \int_0^t\mathcal{D}F_s(B_s)dB(s) + \int_0^t\mathcal{D}^{\mathcal{F},h}F_s(B_s)ds -\frac{1}{2}\int_0^t\int_{-\infty}^{+\infty}\nabla_xF_s(\textbf{t}(B_s,x))d_{(s,x)} \ell^{x}(s)$$
for $~0\le t\le T.$
\end{theorem}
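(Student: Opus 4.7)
The plan is to combine the universal occupation-time representation from Proposition~\ref{ulocaltime} with the pathwise 2D Young-integral identification from Proposition~\ref{ltlema}, and then identify each term of the limit separately.

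First, I would apply Proposition~\ref{ulocaltime} to $X=F(B)\in\mathcal{W}(\mathbb{F})$ equipped with the stable structure $\mathcal{F}$. Since hypothesis~(\ref{i1}) is in force, this yields
\[
V_{F(B)}(\cdot)=\lim_{k\to\infty}\Bigl[\int_0^\cdot \mathbb{D}^{\mathcal{F},k,h}F(B)(s)ds-\frac{1}{2}\int_0^\cdot\int_{-\infty}^{+\infty}\nabla^{\mathcal{F},k,v}F(B)(s,x)d_{(s,x)}\mathbb{L}^{k,x}(s)\Bigr]
\]
weakly in $\mathbf{B}^2(\mathbb{F})$. By the very assumption that $\mathcal{D}^{\mathcal{F},h}F(B)$ exists in the sense of~(\ref{SMOOTHDER}), $\mathbb{D}^{\mathcal{F},k,h}F(B)\to\mathcal{D}^{\mathcal{F},h}F(B)$ weakly in $L^1_a(\mathbb{P}\times Leb)$, which immediately gives $\int_0^\cdot\mathbb{D}^{\mathcal{F},k,h}F(B)(s)ds\to\int_0^\cdot\mathcal{D}^{\mathcal{F},h}F_s(B_s)ds$ in the sense required for weak $\mathbf{B}^2$ identification.

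The substantive step is identifying the local-time integral. For a fixed bounded $\mathbb{F}$-stopping time $S\le T$ and $m\ge 1$, I would use Lemma~\ref{fkst} to obtain $\mathbb{F}^k$-predictable approximations $J^k\to S$ a.s., and apply Proposition~\ref{ltlema} to the truncated integrals on the spatial box $[-2^m,2^m]$ evaluated at $J^k$: this delivers strong $L^1(\mathbb{P})$ convergence of the $L^{k,x}$-driven integral toward the pathwise 2D Young integral against $\ell^x$. To bring this back to the $\mathbb{L}^{k,x}$-driven integrals appearing in Proposition~\ref{ulocaltime}, I would argue that the discrepancy between the clocks $[A^k,A^k]$ and $\langle A^k,A^k\rangle$ is controlled by an $\mathbb{F}^k$-martingale whose contribution vanishes in the limit, so both occupation-time proxies share the same uniform limit $\ell^x$ (Lemma~\ref{lemmaLk}(i) for $L^{k,x}$, and an analogous argument for $\mathbb{L}^{k,x}$ using Th.~4.1 in~\cite{knight}). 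The bivariation hypotheses~\textbf{(L2.2)} combined with the maximal inequalities of Corollary~1.1 of~\cite{OS} then ensure uniform integrability of the corresponding difference of 2D Young integrals. Extending from $[-2^m,2^m]$ to $\mathbb{R}$ is pure localization: on each $\omega$, both $B(\omega)$ and $A^k(\omega)$ are bounded on $[0,T]$ by a random $R(\omega)$ (up to $\epsilon_k$), so $\mathbb{L}^{k,\cdot}(t,\omega)$ and $\ell^{\cdot}(t,\omega)$ are supported in a compact set independent of $k$, and taking $m$ with $2^m>R(\omega)+1$ makes the truncated integrals equal to the full ones.

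Putting the pieces together, uniqueness of the weak $\mathbf{B}^2$ limit in Proposition~\ref{ulocaltime}, together with the pointwise (in $S$) strong $L^1(\mathbb{P})$ identifications above, identifies $V_{F(B)}$ with the claimed Young-integral expression; adding back the martingale component $\int_0^\cdot\mathcal{D}F_s(B_s)dB(s)$ supplied by weak differentiability gives the full decomposition. I expect the main obstacle to lie precisely in rigorously reconciling $\mathbb{L}^{k,x}$ and $L^{k,x}$ at the level of 2D Young integrals, since Proposition~\ref{ltlema} is formulated with the quadratic-variation clock while Proposition~\ref{ulocaltime} uses the predictable-compensator clock; controlling the martingale discrepancy uniformly in $k$ under the bivariation hypotheses is the delicate technical point.
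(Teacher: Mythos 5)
Your proposal is correct and follows essentially the same route as the paper: Proposition~\ref{ulocaltime} for the occupation-time representation of the drift, the horizontal term handled via the assumed existence of $\mathcal{D}^{\mathcal{F},h}F(B)$, Proposition~\ref{ltlema} combined with the predictable stopping times of Lemma~\ref{fkst} to identify the local-time term, and localization via the compact support of the Brownian local time. The one refinement worth noting is that the paper reconciles the two clocks $[A^k,A^k]$ and $\langle A^k,A^k\rangle$ not by an asymptotic martingale-vanishing estimate uniform in $k$, but by the exact identity~(\ref{prconv}): since the integrand is $\mathbb{F}^k$-predictable and $]]0,J^\ell]]$ is a predictable interval, the $\mathbb{F}^k$-dual predictable projection gives equality of the two expectations for each fixed $k$, and the final identification of $Q$ with the 2D Young integral is then obtained from equality of expectations at all stopping times via Corollary~4.13 in~\cite{he}.
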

\begin{proof}
By assumption, $F(B)\in \mathcal{W}(\mathbb{F})$ and~(\ref{i1}) holds. Then, Proposition \ref{ulocaltime} yields

$$F(B) = F_0(B_0) + \int\mathcal{D}F_s(B_s)dB(s) + V$$
where

$$
V(\cdot)=\lim_{k\rightarrow \infty}\Bigg[\int_0^\cdot \mathbb{D}^{\mathcal{F},k,h}F_s(B_s)ds - \frac{1}{2}\int_0^\cdot\int_{-\infty}^{+\infty}\nabla^ {\mathcal{F},v,k}F_s(B_s,x)d_{(s,x)}\mathbb{L}^{k,x}(s)\Bigg]
$$
weakly in $\mathbf{B}^2(\mathbb{F})$. By using Lemma~\ref{fkst} and the existence of $\mathcal{D}^{\mathcal{F},h}F(B)$, we can argue in the same way as in the proof of Theorem 4.3 in \cite{LOS} to conclude that

$$\lim_{k\rightarrow \infty}\int_0^\cdot \mathbb{D}^{\mathcal{F},k,h}F_s(B_s)ds= \int_0^\cdot\mathcal{D}^{\mathcal{F},h}F_t(B_t)dt$$
weakly in $\textbf{B}^1(\mathbb{F})$. Then $Q(\cdot) := \lim_{k\rightarrow \infty}\int_0^\cdot\int_{-\infty}^{+\infty}\nabla^{\mathcal{F},k,v}F_s(B_s,x)d_{(s,x)}\mathbb{L}^{k,x}(s)$ weakly in $\textbf{B}^1(\mathbb{F})$. We claim that

$$
Q(t) = \int_0^t\int_{-\infty}^{+\infty}\nabla_xF_s(\textbf{t}(B_s,x))d_{(s,x)}\ell^x(s)~a.s;~0\le t < \infty.
$$
Let us consider the stopping time
$$
R_m:= \inf\{0\le t < \infty; |B(t)| \ge 2^ m\};~ m\ge 1.
$$
One should notice that $R_m$ is an $\mathbb{F}^k$-stopping time for every $k\ge 1$. Then, by the very definition
$Q(\cdot\wedge R_m) =\lim_{k\rightarrow \infty}\int_0^\cdot\int_{-2^m}^{2^m}\nabla^{\mathcal{F},v,k}F_s(B_s,x)d_{(s,x)}\mathbb{L}^{k,x}(s\wedge R_m)$ weakly in $\textbf{B}^1(\mathbb{F})$. For a given arbitrary $\mathbb{F}$-stopping time $J$ (bounded or not), let $J^\ell$ be a sequence of $\mathbb{F}$-stopping times from Lemma~\ref{fkst} such that $J^\ell$ is an $\mathbb{F}^\ell$-stopping time for each $\ell \ge 1$ and $\lim_{\ell\rightarrow \infty}J^\ell = J~a.s$. Initially, we  set $k > \ell >m$ and let us denote $A^{k,m}$ by the stopped process $A^k$ at $R_m$.

By taking the structure $\mathcal{F} = \big((\mathbf{F}^k)_{k\ge 1},\mathscr{D}\big)$ for $F(B)$, we readily see that $\frac{\Delta F_{\cdot,j,k}}{2^{-k}}(b^k_\cdot(j) - b^k_\cdot(j-1))$ is $\mathbb{F}^k$-predictable for each $j\in \mathbb{Z}$~(recall the notation in~(\ref{deltaF})). Then, we shall use $\mathbb{F}^k$-dual predictable projection on the stochastic set $]]0,J^\ell]]$ (see Th. 5.26 in~\cite{he}) to get

$$\mathbb{E}\int_0^{J^\ell} \frac{\Delta F_{s,j,k}}{2^{-k}}\big(b^k_s(j) - b_s^k(j-1)\big)d[A^{k,m},A^{k,m}](s) =\mathbb{E}\int_0^{J^\ell} \frac{\Delta F_{s,j,k}}{2^{-k}}\big(b^k_s(j) - b_s^k(j-1)\big)d\langle A^{k,m},A^{k.m}\rangle(s);$$
for $j\in \mathbb{Z}$, so that
\small
\begin{equation}\label{prconv}
\mathbb{E}\int_0^{J^\ell}\int_{-2^m}^{2^m} \nabla^{\mathcal{F},v,k}F_s(B_s,x)d_{(s,x)}L^{k,x}(s\wedge R_m)  = \mathbb{E}\int_0^{J^\ell}\int_{-2^m}^{2^m} \nabla^{\mathcal{F},v,k}F_s(B_s,x)d_{(s,x)}\mathbb{L}^{k,x}(s\wedge R_m).
\end{equation}
\normalsize
The fact that $R_m < \infty$ a.s, Proposition~\ref{ltlema} and (\ref{prconv}) yield

\small
\begin{equation}\label{part1}
\mathbb{E}\int_0^{J^\ell}\int_{-2^m}^{2^m} \nabla_x F_s(\textbf{t}(B_s,x))d_{(s,x)}\ell^{x}(s\wedge R_m)= \mathbb{E}\big[Q(J^\ell\wedge R_m)\big];~m\ge 1,
\end{equation}
\normalsize
for every $\ell\ge 1$. Since $Q\in \textbf{B}^1(\mathbb{F})$ has continuous paths, we actually have

\begin{equation}\label{part2}
\lim_{\ell\rightarrow \infty}\mathbb{E}\big[Q(J^\ell \wedge R_m)\big] =\mathbb{E}\big[Q(J\wedge R_m)\big],
\end{equation}
for every $\mathbb{F}$-stopping time $J$. We also know that the 2D Young integral $\int_0^\cdot\int_{-2^m}^{2^m}\nabla_xF_s(\textbf{t}(B_s,x))d_{(s,x)}\ell^{x}(s\wedge R_m)$ has continuous paths for every $m\ge 1$. Moreover, by applying the inequality~given in~[\cite{OS};Corollary 1.1] together with~\textbf{(L2.1)}, we readily see that it belongs to $\textbf{B}^1(\mathbb{F})$. The bounded convergence theorem and relations~(\ref{part1}) and~(\ref{part2}) yield

\begin{equation}\label{part3}
\mathbb{E}\big[Q(J\wedge R_m)\big] = \mathbb{E}\int_0^{J\wedge R_m}\int_{-2^m}^{2^m}\nabla_xF_s(\textbf{t}(B_s,x))d_{(s,x)}\ell^{x}(s),
\end{equation}
for every $m\ge 1$. We shall write, $\mathbb{E}\big[Q(J \wedge R_m)\big] = \mathbb{E}\big[Q(R_m)1\!\!1_{\{J=+\infty\}}\big]  + \mathbb{E}\big[Q(J\wedge R_m)1\!\!1_{\{J< +\infty\}}\big]$. The path continuity of $Q\in \textbf{B}^{1}(\mathbb{F})$ and the fact that all processes are assumed to be null after time $0 < T< \infty$ yield

\begin{equation}\label{gpart4}
\lim_{m\rightarrow  \infty}\mathbb{E}\big[Q(J \wedge R_m)\big] = \mathbb{E}\big[Q(J)1\!\!1_{\{J< +\infty\}}\big].
\end{equation}
By using the fact that the Brownian local time has compact support, we do the same argument to get

\begin{equation}\label{gpart5}
\lim_{m\rightarrow \infty} \mathbb{E}\int_0^{J\wedge R_m}\int_{-2^m}^{2^m}\nabla_xF_s(\textbf{t}(B_s,x))d_{(s,x)}\ell^{x}(s) =\mathbb{E}\int_0^{J}\int_{-\infty}^{+\infty}\nabla_xF_s(\textbf{t}(B_s,x))d_{(s,x)}\ell^{x}(s) 1\!\!1_{\{J< +\infty\}}
\end{equation}
Summing up~(\ref{part3}), (\ref{gpart4}) and (\ref{gpart5}), we conclude that

$$
\mathbb{E}\big[Q(J)1\!\!1_{\{J < +\infty\}}\big] = \mathbb{E}\int_0^J\int_{-\infty}^{+\infty}\nabla_xF_s(\textbf{t}(B_s,x))d_{(s,x)}\ell^{x}(s)1\!\!1_{\{J < +\infty\}}.
$$
Lastly, from Corollary~4.13 in~\cite{he}, we shall conclude that both $Q(\cdot)$ and $\int_0^\cdot\int_{-\infty}^{+\infty}\nabla_xF_s(\textbf{t}(B_s,x))d_{(s,x)}\ell^{x}(s)$ are indistinguishable.
\end{proof}

Let us come back to the Example \ref{exampleYOUNG} under the regularity condition $\frac{1}{2}<\gamma_1 \le 1$.

\begin{example}\label{lastexample}
\end{example}
\noindent In the Example \ref{exampleYOUNG}, let us assume that (\ref{Holderpr}) holds with the additional regularity condition $\frac{1}{2}<\gamma_1 \le 1$. Then,

\begin{eqnarray}
\nonumber F_t(B_t) &=& \int_0^t\int_0^s\varphi(B(r),B(s))dr dB(s) + \int_0^t\int_{-\infty}^{B(s)}\varphi(B(s),y)dyds\\
\label{finalex}& &\\
\nonumber &-& \frac{1}{2}\int_0^t\int_{-\infty}^{+\infty}\int_0^s\varphi(B(r), x)dr d_{(s,x)}\ell^x(s);~0\le  t\le T.
\end{eqnarray}

In view of Theorems \ref{FIRSTTH} and \ref{youngTh}, in order to prove (\ref{finalex}), we only need to check that \textbf{L1, L2.1, L2.2} hold true.
By the very definition, $F\in\mathcal{C}^1$. Let us begin by checking assumption (\textbf{L1}). Notice that $\partial_x F_t(\textbf{t}(c_t,x)) = \int_0^t \varphi(c(s),x)ds$.
Therefore, for c\`adl\`ag paths $c$ and $d$, we easily get

$$
|\nabla_x F_t(\textbf{t}(c_t,x)) - \nabla_x F_t(\textbf{t}(d_t,x))|\le M_2T \sup_{0\le t\le T} |c(t)-d(t)|^{\gamma_2},
$$
which clearly implies assumption (\textbf{L1}). Let us now check assumption \textbf{L2.1} and \textbf{L2.2}. H\"{o}lder continuity yields

$$
|\Delta_i\Delta_j\nabla_xF_{t_i}(\textbf{t}(B_{t_i},x_j))|\le  M_1 (t_i-t_{i-1})(x_j-x_{j-1})^{\gamma_1}.
$$

By making the change of variables $z = y-\Delta j_k(x_j)2^{-k}$ with $\Delta j_k(x_j) = (j_k(x_{j})-j_k(x_{j-1}))$, we similarly have

$$
|\Delta_i\Delta_j\nabla^{\mathcal{F},v,k} F_{t_i}(B_{t_i},x_j)| \le M_1(t_i-t_{i-1})(x_j-x_{j-1})^{\gamma_1}.
$$
For every $L>0$, we clearly have
$$\sup_{0\le t\le T}\| \nabla_xF_t(\textbf{t}(B_t,\cdot))\|_{[-L,L];p}\le M_12L~a.s$$
where $p=\frac{1}{\gamma_1}$. Similarly, the usual mean value theorem yields
$$\sup_{k\ge 1}\sup_{0\le t\le T}\| \nabla^{\mathcal{F},k,v} F_t(B_t,\cdot))\|_{[-L,L];p}\le M_12L~a.s.$$
Now, if $1/2 < \gamma_1 \le1$, there exists $\delta >0$ such that $\gamma_1\in (\frac{1+\delta}{2+\delta}, 1]$ and we shall take $q_1=1,~q_2 = p$ and $\alpha\in (0,1)$ in such way that $\min\{\alpha + \frac{1}{q_1},\frac{1-\alpha}{2+\delta} +\frac{1}{q_2}\}>1$ and $\frac{1}{p} + \frac{1}{2+\delta}> 1$. This shows that \textbf{L2.1} and \textbf{L2.2} hold true. It remains to show that

$$\mathcal{D}^{\mathcal{F},h}X(t) = \int_{-\infty}^{B(t)}\varphi(B(t),y)dy;0\let\le T.$$
But this is a simple consequence of the identity

$$\mathcal{D}^{\mathcal{F},k,h}F_{T^k_n}(B_{T^k_n}) =\int_{-\infty}^{A^k(T^k_{n-1})}\varphi(A^k(T^k_{n-1}),y)2^{2k}(T^k_n-T^k_{n-1})dy; n\ge 1$$
and the fact that $\mathbb{E}|T^k_n-T^k_{n-1}|=2^{-2k}$ where $T^k_n-T^k_{n-1}$ is independent from $\int_{-\infty}^{A^k(T^k_{n-1})}\varphi(A^k(T^k_{n-1}),y)dy$ for every $n\ge 1$.


\begin{thebibliography}{15}

\bibitem{ananova} Ananova, A. Cont, R. Pathwise integration with respect to paths of finite quadratic variation. ‎\textit{J. Math. Pures Appl}, \textbf{107}, 6, 737-757. 	

\bibitem{Barlow}Barlow, M. (1984). A maximal inequality for upcrossings of a continuous martingale. \textit{Z. Wahrch.Verw. Gebiete}, \textbf{67}, 2, 169-173.
    
\bibitem{barrasso} Barrasso, A. and Russo, F. (2018). Decoupled mild solutions of path-dependent PDEs and IPDEs represented by BSDEs driven by cadlag martingales. arXiv: 1804.08903. 
    
\bibitem{bertoin} Bertoin, J. (1986). Les processus de Dirichlet en tant qu'espace de Banach. \textit{Stochastics}, \textbf{18}, 155-168.

\bibitem{bezerra} Bezerra, S.C., Ohashi, A. and Russo, F. Discrete-type approximations for non-Markovian optimal stopping problems: Part II. arXiv: 1707.05250



\bibitem{nadal} Bion-Nadal, J. (2016). Dynamic Risk Measures and Path-Dependent Second Order PDEs. In Stochastics of Environmental and Financial Economics (pp. 147-178). Springer, Cham.

\bibitem{buckdhan1} Buckdahn, R., Keller, C., Ma, J. and Zhang, J. (2017). Pathwise Viscosity Solutions of Stochastic PDEs and Forward Path-Dependent PDEs --- A Rough Path View. arXiv:1501.06978v2.

\bibitem{cont} Cont, R. and Fourni\'e, D.-A. (2013). Functional It\^o calculus and stochastic integral representation of martingales.~\textit{Ann. Probab},~\textbf{41}, 1, 109-133.

\bibitem{cont1} Cont, R. and Fourni\'e,D.~A. (2010). Change of variable formulas for non-anticipative functional on path space.~\textit{J. Funct. Anal.},~\textbf{259}, 4, 1043-1072.

\bibitem{cont2} Cont, R. Functional It\^o calculus and functional Kolmogorov equations, in: V Bally et al: Stochastic integration by parts and Functional Ito calculus (Lectures Notes of the Barcelona Summer School on Stochastic Analysis, Centro de Recerca de Matematica, July 2012), Springer: 2016.

\bibitem{cont3} Cont, R. and Lu, Y. (2016). Weak approximations for martingale representations, \textit{Stochastic Process Appl}, \textbf{126}, 3, 857–882.

\bibitem{resestolato} Cosso, A., Federico, S., Gozzi, F., Resestolato, M. and Touzi, N. (2017). Path-dependent equations and viscosity solutions in infinite dimension. arXiv:1502.05648. To appear in Annals of Probability.

\bibitem{cosso1} Cosso, A. and Russo, F. (2016). Functional It\^o versus Banach space stochastic calculus and strict solutions of semilinear path-dependent equations, \textit{Infin. Dimens. Anal. Quantum Probab. Relat. Top}, \textbf{19}, 4.


\bibitem{cosso2} Cosso, A. and Russo, F. Functional and Banach Space Stochastic Calculi: Path-Dependent Kolmogorov Equations Associated with the Frame of a Brownian Motion.  \textbf{138}, pp. 27-80, \textit{Springer Proceedings in Mathematics and Statistics}. F.E. Benth and G. Di Nunno (eds.), Stochastics of Environmental and Financial Economics,, oct, 2015.



\bibitem{cosso3} Cosso, A. and Russo, F. (2015). Strong-viscosity solutions: semilinear parabolic PDEs and path-dependent PDEs. arXiv:1505.02927. To appear in Osaka Journal of Mathematics.



\bibitem{dellacherie} Dellacherie, C. and Meyer, P. A. \textit{Probability and Potential B}. Amsterdam: North-Holland, 1982.



\bibitem{dupire} Dupire, B. \textit{Functional It\^o calculus}. Portfolio Research Paper 2009-04. Bloomberg.


\bibitem{touzi1} Ekren, I., Keller, C., Touzi, N. and Zhang, J. (2014). On Viscosity Solutions of Path Dependent PDEs.~\textit{Ann. Probab.}~, \textbf{42}, 1, 204-236.

\bibitem{touzi2} Ekren, I., Touzi, N. and Zhang, J. (2016). Viscosity Solutions of Fully Nonlinear Parabolic Path Dependent PDEs: Part I. ~\textit{Ann. Probab.}~, \textbf{44}, 2, 1212-1253.


\bibitem{touzi3} Ekren, I., Touzi, N. and Zhang, J. (2016). Viscosity Solutions of Fully Nonlinear Parabolic Path Dependent PDEs: Part II. ~\textit{Ann. Probab.}~, \textbf{44}, 4, 2507-2553.


\bibitem{ekren} Ekren, I. and Zhang, J. (2016). Pseudo-Markovian viscosity solutions of fully nonlinear degenerate PPDEs. \textit{Probab. Uncertain. Quant. Risk}, 1, 6.





\bibitem{feng} Feng, C. and Zhao, H. (2006). Two-parameter $p,q$-variation Paths and Integrations of Local-Times. \textit{Potential Anal}, \textbf{25}, 165-204


\bibitem{follmer} Follmer, H., Protter, P.and Shiryaev, A.S. (1995). Quadratic Covariation and an Extension of Ito's Formula, \textit{Bernoulli},~\textbf{1}, 149-169.

\bibitem{flandoli} Flandoli. F and Zanco, G. (2016). An infinite-dimensional approach to path-dependent Kolmogorov's
equations. \textit{Ann. Probab}. \textbf{44}, 4, 2643-2693.

\bibitem{friz} Friz, P. Victoir, N. \textit{Multidimensional stochastic processes as rough paths. Theory and Applications}. Cambridge University Press. 2011.




\bibitem{he} He, S-w., Wang, J-g., and Yan, J-a. \textit{Semimartingale Theory and Stochastic Calculus}, CRC Press, 1992.


\bibitem{jazaerli} Jazaerli, S. and Saporito, Y.F. (2017). Functional Itô calculus, path-dependence and the computation of Greeks. \textit{Stochastic Process. Appl}, \textbf{127}, 12, 3997-4028.

\bibitem{karandikar} Karandikar, R.L. (1995). On pathwise stochastic integration, \textit{Stochastic Process. Appl}, \textbf{57}, 11-18.


\bibitem{keller} Keller, C. and Zhang, J. (2016). Pathwise It\^o Calculus for Rough Paths and Rough PDEs with Path Dependent Coefficients. \textit{Stochastic Process. Appl}, \textbf{126}, 3, 735-766.


\bibitem{kho} Khoshnevisan, D. (1994). Exact Rates of Convergence to Brownian Local Time. \textit{Ann. of Probab}.\textbf{22}, 3, 1295-1330.

\bibitem{knight} Knight, F. (1963). Random walk and a sojourn density process of Brownnian motion. \textit{Trans. Amer. Math. Soc}, \textbf{109}, 56-86.



\bibitem{LEAO_OHASHI2013} Le\~{a}o, D. and Ohashi, A. (2013). Weak approximations for Wiener functionals. \textit{Ann. Appl. Probab,} 23, \textbf{4}, 1660-1691.

\bibitem{LOS} Le\~ao, D. Ohashi, A. and Simas, A. B. (2017). A weak version of path-dependent functional It\^o calculus. arXiv:1707.04972. To appear in Annals of Probability.

\bibitem{LEAO_OHASHI2017.1} Le\~{a}o, D., Ohashi, A. and Souza, F. (2018). Stochastic Near-Optimal Controls for Path-Dependent Systems. arXiv: 1707.04976.

\bibitem{LEAO_OHASHI2017.2} Le\~ao, D., Ohashi, A. and Russo, F. (2018). Discrete-type approximations for non-Markovian optimal stopping problems: Part I. arXiv: 1707.05234.


\bibitem{ober} Oberhauser, H. (2016). The functional It\^o formula under the family of continuous semimartingales measures. \textit{Stoch. Dyn}, \textbf{16}, 4.


\bibitem{ohashi}  Ohashi, A., Shamarova, E. and Shamarov, N.N. (2016). Path-dependent It\^o formulas under $(p,q)$-variations. \textit{Lat. Am. J. Probab. Math. Stat.}, \textbf{13}, 1-31.


\bibitem{OS} Ohashi, A. and Simas, A.B. (2014). A maximal inequality of the 2D Young integral based on bivariations. arXiv:1408.1428.

\bibitem{OS1} Ohashi, A. and Simas, A.B. (2015). A note on the sharp $L^p$-convergence rate of upcrossings to the Brownian local-time. \textit{Statist. Probab. Lett}, \textbf{100}, 135-141.

\bibitem{pham} Pham, T. and Zhang, J. (2014). Two Person Zero-Sum Game in Weak Formulation and Path Dependent Bellman--Isaacs Equation. \textit{ SIAM J. Control Optim}, \textbf{52}, 4, 2090-2121.

\bibitem{possamai} Possama\"i, D., Tan, X. and Zhou, C. (2017). Stochastic control for a class of nonlinear kernels and applications. \textit{Ann. Probab}, \textbf{46}, 1, 551-603.

\bibitem{peng1} Peng, S. and Wang, F. (2016). BSDE, Path-dependent PDE and Nonlinear Feynman-Kac Formula. \textit{Science China Mathematics}, \textbf{59}, 1, 19-36.

\bibitem{peng2} Peng, S. and Song, Y. (2015). G-expectation Weighted Sobolev Spaces, Backward SDE and Path-Dependent PDE. \textit{J. Math. Soc. Japan},
\textbf{67}, 4, 1725-1757.

\bibitem{peres} Peres, Y. and Morters, P. \textit{Brownian Motion}. Cambridge Series in Statistical and Probabilistic Mathematics, Volume 30.

\bibitem{resestolato1} Resestolato, M. (2016). Functional It\^o calculus in Hilbert spaces and application to path-dependent Kolmogorov equations. arXiv: 1606.06326.

\bibitem{saporito} Saporito, Y. (2018). The functional Meyer-Tanaka formula. \textit{Stoch. Dyn}, \textbf{18}, 4.


\bibitem{viens} Viens, F. and Zhang, J. (2017). A Martingale Approach for Fractional Brownian Motions and Related Path Dependent PDEs. arXiv:1712.03637v1.

\bibitem{young} Young, L.C. (1936). An inequality of H\"{o}lder type, connected with Stieltjes integration. \textit{Acta Math}, \textbf{67}, 251-282.

\bibitem{young1} Young, L.C. (1937). General inequalities for Stieltjes integrals and the convergence of Fourier series. \textit{Mathematische Annalen}, 581-612.

\bibitem{zhang} Zhang, J. Backward Stochastic Differential Equations -- from linear to fully nonlinear theory, Springer, New York, 2017.

\end{thebibliography}
\end{document}